\title{On the multi-Koszul property for connected algebras}
\author{Estanislao Herscovich
\footnote{Institut Joseph Fourier, Universit\'e Grenoble I, France, and Departamento de Matem\'atica, FCEyN, Universidad de Buenos Aires, Argentina.}
}
\date{}
\newtheorem{theorem}{Theorem}[section]
\newtheorem{proposition}[theorem]{Proposition}
\newtheorem{definition}[theorem]{Definition}
\newtheorem{lemma}[theorem]{Lemma}
\newtheorem{corollary}[theorem]{Corollary}
\newtheorem{remark}[theorem]{Remark}
\newtheorem{example}[theorem]{Example}
\newtheorem{fact}[theorem]{Fact}
\numberwithin{equation}{section}
\def\cl#1{{\langle #1 \rangle}}
\def\place{{-}}
\def\Ker{\mathop{\rm Ker}\nolimits}
\def\Tor{\mathop{\rm Tor}\nolimits}
\def\Ext{\mathop{\rm Ext}\nolimits}
\def\Hom{{\mathrm {Hom}}}
\def\grMod{{\mathrm {grMod}}}
\newcommand\ZZ{{\mathbb{Z}}}
\newcommand\NN{{\mathbb{N}}}
\def\place{{-}}
\begin{document}

\maketitle
                                                     
\hrulefill
%%%%%%%%%%%%%%%%%%%%%%%%%%%%%%%%%%%%%%%%%%%%%%%%%%%%%%%%%%%%%%%%%%%%%%%%%%%%%%%%%%%%%%%%%%%%%%%%%%%%%%%%%%%%%%%%%%%%%%%%%%%%%%%%%%

\begin{abstract}
In this article we introduce the notion of \emph{multi-Koszul algebra} for the case of a locally finite dimensional nonnegatively graded connected algebra, 
as a generalization of the notion of (generalized) Koszul algebras defined by R. Berger for homogeneous algebras, 
which were in turn an extension of Koszul algebras introduced by S. Priddy. 
This notion also extends and generalizes the one recently introduced by the author and A. Rey, which was for the particular case of algebras further assumed to be finitely generated in degree $1$ 
and with a finite dimensional space of relations. 
The idea of this new notion in the realm of arbitrary locally finite dimensional nonnegatively graded connected algebras, 
which should be perhaps considered as a probably interesting common property for several of these algebras, was to find a grading independent description of some of the interesting 
properties shared by all generalized Koszul algebras. 
In order to simplify the exposition and unify the left and right possible sides of the multi-Koszul property, the very definition of multi-Koszul algebras starts 
by considering the minimal graded projective resolution of the algebra $A$ as an $A$-bimodule, which may be used to compute the corresponding Hochschild (co)homology groups. 
This new definition includes several new interesting examples, \textit{e.g.} the super Yang-Mills algebras introduced by M. Movshev and A. Schwarz, which are not generalized Koszul 
or even multi-Koszul for the previous definition given by the author and Rey in any natural manner.
On the other hand, we provide an equivalent description of the new definition in terms of the \textrm{Tor} (or \textrm{Ext}) groups, similar to the existing one for homogeneous algebras, 
and we show that several of the typical homological computations performed for the generalized Koszul algebras are also possible in this more general setting. 
In particular, we give a very explicit description of the $A_{\infty}$-algebra structure of the Yoneda algebra of a multi-Koszul algebra, 
which has a similar pattern as for the case of generalized Koszul algebras. 
We also show that the Yoneda algebra of a finitely generated multi-Koszul algebra with a finite dimensional space of relations is generated in degrees $1$ and $2$, 
so a $\mathcal{K}_{2}$ algebra in the sense of T. Cassidy and B. Shelton. 
\end{abstract}

\textbf{Mathematics subject classification 2010:} 16E05, 16E30, 16E40, 16S37, 16W50.

\textbf{Keywords:} Koszul algebra, Yoneda algebra, homological algebra, $A_{\infty}$-algebras.

\hrulefill
%%%%%%%%%%%%%%%%%%%%%%%%%%%%%%%%%%%%%%%%%%%%%%%%%%%%%%%%%%%%%%%%%%%%%%%%%%%%%%%%%%%%%%%%%%%%%%%%%%%%%%%%%%%%%%%%%%%%%%%%%%%%%%%%%%%%%%%%%%%%%%%%%%%%%%%%
%\tableofcontents

%%%%%%%%%%%%%%%%%%%%%%%%%%%%%%%%%%%%%%%%%%%%%%%%%%%%%%%%%%%%%%%%%%%%%%%%%%%%%%%%%%%%%%%%%%%%%%%%%%%%%%%%%%%%%%%%%%%%%%%%%%%%%%%%%%%%%%%%%%%%%%%%%%%%%%%%
\section{\texorpdfstring{Introduction}{Introduction}}
\label{sec:int}

Koszul algebras were introduced by S. Priddy in \cite{P}, motivated by the article \cite{K} of J.-L. Koszul. 
They seem to have a pervasive appearance in representation theory (\textit{cf.} \cite{BGS1}, \cite{BGS2}), algebraic geometry (\textit{cf.} \cite{F}), quantum groups (\textit{cf.} \cite{M}), 
and combinatorics (\textit{cf.} \cite{HL}), to mention a few. 
These algebras are necessarily quadratic, \textit{i.e.} of the form $T(V)/\cl{R}$, with $R \subseteq V^{\otimes 2}$. 
R. Berger generalized in \cite{B2} the notion of Koszul algebras (\textit{cf.} also \cite{GMMZ}) for the case of homogeneous algebras, 
\textit{i.e.} algebras given by $T(V)/\cl{R}$, with $R \subseteq V^{\otimes N}$, for $N \geq 2$. 
They were called \emph{generalized Koszul}, or \emph{$N$-Koszul} if the mention to the degree of the relations was to be indicated, and the case $N = 2$ of the definition introduced by Berger 
coincides with the one given by Priddy. 
The general definition shares a lot of good properties with the one given by Priddy, justifying the terminology (see for example \cites{B2, BG}). 
In particular, the Yoneda algebra of an $N$-Koszul algebra is finitely generated (in degrees $1$ and $2$), and its structure is easily computed from that of the original algebra. 
We would like to point out that the new class of algebras satisfying the Koszul property of Berger lacks however of other interesting properties, 
\textit{e.g.} they are not closed under taking duals, or under considering graded Ore extensions, the Yoneda algebra of an $N$-Koszul algebra is not formal for $N \geq 3$, etc. 

Nevertheless, it is still a natural question to ask if there exists an analogous definition for much general types of algebras, which satisfy some of the of the good homological properties 
satisfied by generalized Koszul algebras. 
In fact, in the recent paper \cite{HeRe}, A. Rey and the author have considered an extension of the generalized Koszul property, called \emph{multi-Koszul}, to the case of finitely generated 
nonnegatively graded connected algebras generated in degree $1$ and with a finite dimensional space of relations, which coincides with the definition given by Berger if the algebra is homogeneous. 
Even though the definition of that article has several advantages, for it satisfied several homological properties as for the case of generalized Koszul algebras, 
it seemed to be a little \emph{ad hoc}, and also too restrictive, for it cannot be applied in much general contexts, where the previous assumptions on the algebras do not hold 
(\textit{e.g.} if the algebra is not generated in degree $1$). 
In this article we proceed in a completely different manner to get rid of the hypotheses stated before and to consider all locally finite dimensional nonnegatively graded connected algebras. 
In fact, our main goal is to provide a collection of algebras which have a very similar homological behaviour to one of the generalized Koszul algebras, 
for which the Yoneda algebra structure (and even the $A_{\infty}$-algebra structure) is directly deduced from that of the original algebra, 
in a similar fashion as the case of generalized Koszul algebras (see Theorem \ref{theorem:yoneda}, Proposition \ref{proposition:yonedaainf}, and Theorem \ref{theorem:ainftres}), 
and the former is in fact finitely generated if the algebra in question is finitely generated with a finite dimensional space of relations (see Proposition \ref{proposition:k2}). 
Furthermore, our construction of the multi-Koszul complex, and thus our definition of multi-Koszul algebra, is independent of the (nonnegative) grading of the algebra, 
and it restricts to the notion introduced in \cite{HeRe} for the additional assumption that the algebra 
is finitely generated in degree $1$ and it has a finite dimensional space of relations (see Proposition \ref{proposition:equiv}), 
which further implies that it coincides with the definition given by Berger for homogeneous algebras, and thus with the one given by Priddy for quadratic algebras. 
All these properties have precisely given shape to the definition, which in some sense consists of requiring that the \textrm{Tor} (or \textrm{Ext}) groups should have an 
``optimal'' behaviour and relations between them, in such a way that the standard homological techniques of generalized Koszul algebras still hold. 
This is somehow the main result of the present work and it gives a reason for this new definition, contrary to the one given in \cite{HeRe}, which could have seemed to be somehow arbitrary. 

We would like to point out that even tough this new definition could still seem to be rather restrictive, it allows several interesting examples, 
for it includes the super Yang-Mills algebras introduced by M. Movshev and A. Schwarz in \cite{MS06} 
and further studied by the author in \cite{Her10}, which were one of the main motivations for the present work (see Example \ref{example:sym}). 
These algebras are not generated in degree $1$, so they cannot be multi-Koszul in the sense defined in \cite{HeRe}, and in particular they cannot be generalized Koszul either.
We also admit that our definition is probably not the most general possible and reasonable extension of the Koszul property for all locally finite dimensional nonnegatively graded connected algebras, but all the interesting properties mentioned in the previous paragraph make us believe that any sensible 
such general definition of \textit{Koszul-like} algebra in this general context, if it exists, should necessarily include our definition as a special case. 
That is also one of the main reasons why we have refrained from calling this new family of algebras Koszul.

The contents of the article are as follows. 
We start by recalling in Section \ref{sec:homo} several well-known definitions and results about the category of graded modules over a locally finite dimensional 
nonnegatively graded connected algebra.

Section \ref{sec:multikoszul} is devoted to the definition of multi-Koszul algebras and to prove some properties for this family of algebras. 
In order to simplify the exposition and unify the left and right sides, it seems useful to work directly with the minimal projective resolution of the algebra $A$ as an $A$-bimodule, 
and to derive from it the minimal projective resolutions of the left or right trivial $A$-module $k$. 
The definition requires however several preliminary results about the spaces that will provide the generators of these minimal projective resolutions, 
which are in given in Subsection \ref{subsec:inter} and in the beginning of Subsection \ref{subsec:defimultikoszul}. 
The first main result, Proposition \ref{proposition:tresimp} (see also Proposition \ref{proposition:tresimpext}) gives a (co)homological description of multi-Koszul algebras in term of their \textrm{Tor} 
(or \cal{E}xt) groups, which yields a left-right symmetry of the definition.
Moreover, in Subsection \ref{subsec:propmultikoszul} we also provide several properties satisfied by these algebras, and in particular we show that they are stable under free 
products (see Proposition \ref{proposition:libre}) and they are $\mathcal{K}_2$ algebras, in the sense defined by B. Cassidy and T. Shelton in \cite{CS}, 
if the algebras are further assumed to be finitely generated with finite dimensional spaces of relations (see Proposition \ref{proposition:k2}). 

Finally, in Section \ref{sec:yonedamultikoszul} we provide an explicit description of the algebra structure, and further of $A_{\infty}$-algebra structure, of the Yoneda algebra of a multi-Koszul algebra (see Theorem \ref{theorem:yoneda}, Proposition \ref{proposition:yonedaainf}, and Theorem \ref{theorem:ainftres}). 
In particular, if the multi-Koszul algebra is finitely generated generated in degree $1$ and with a finite number of relations, this description coincides with the $A_{\infty}$-algebra 
structure of the Yoneda algebra derived from Prop. 3.21 in \cite{HeRe}, as explained in Rmk. 3.22, and from Rmk. 3.25 of the same article. 
The proofs we give here in this more general setting are however completely different. 
The characterization of both structures given in the previous statements follows a similar pattern to the one given for generalized Koszul algebras in \cite{BM}, Prop. 3.1, and \cite{HeL}, Thm. 6.5, 
which gives another indication that the homological behaviour of the algebras satisfying this new definition is parallel to the one of generalized Koszul algebras. 

Throughout this article $k$ will denote a field, and all vector spaces will be over $k$. 
Moreover, $V$ will always be a vector space, and $A$ a nonnegatively graded connected (associative) algebra over $k$ (with unit), 
to which we will usually just refer as an algebra, with irrelevant ideal $A_{>0} = \bigoplus_{n>0} A_{n}$. 
%The vector space spanned by a set of elements $\{ v_{s} : s \in S \}$, for some index set $S$, %will be denoted by $\mathrm{span}_{k}\cl{ v_{s} : s \in S }$ 
%and the ideal $I$ generated by a set of elements $\{ a_{s} : s \in S \}$ of an algebra $A$ will be %denoted by $\cl{ a_{s} : s \in S}$. 
All unadorned tensor products $\otimes$ will be considered over $k$, unless otherwise stated. 
We shall typically denote the vector space $V^{\otimes n}$ by $V^{(n)}$, for $n \in \ZZ$, where we follow the convention $V^{(n)} = 0$, if $n < 0$, and $V^{(0)} = k$, 
and an elementary tensor $v_{1} \otimes \dots \otimes v_{n} \in V^{(n)}$ will be usually written by $v_{1} \dots v_{n}$. 
However, given (possibly graded) vector subspaces $T_{1}, \dots, T_{m}$ of $T(V)$, and elements $t_{i} \in T_{i}$, for $i = 1, \dots, m$, 
we may denote the element $t_{1} \otimes \dots \otimes t_{m} \in T_{1} \otimes \dots \otimes T_{m} \subseteq T(V)$ by $t_{1} |\dots |t_{m}$, to avoid confusion. 
Finally, given two chain complexes $(C_{\bullet},d_{\bullet})_{\bullet \in \ZZ}$ and $(C'_{\bullet},d'_{\bullet})_{\bullet \in \ZZ}$ of modules over a ring, we say that both complexes 
\emph{coincide up to homological degree} $m \in \ZZ$ if $C_{n} = C'_{n}$ and $d_{n} = d'_{n}$, for all $n \leq m$.   

%%%%%%%%%%%%%%%%%%%%%%%%%%%%%%%%%%%%%%%%%%%%%%%%%%%%%%%%%%%%%%%%%%%%%%%%%%%%%%%%%%%%%%%%%%%%%%%%%%%%%%%%%%%%%%%%%%%%%%%%%%%%%%%%%%%%%%%%%%%%%%%%%%%%%%%%%%%%%%%
\section{\texorpdfstring{Preliminaries on minimal resolutions of graded algebras}{Preliminaries on minimal resolutions of graded algebras}}
\label{sec:homo}

In this section we shall recall some basic facts about the category of (bounded below) graded modules over a nonnegatively graded connected algebra. 
We refer to \cite{C}, Exp. 15, or \cite{B4} for all the proofs of the mentioned results. 
Even though we impose the assumption that $A$ is locally finite dimensional, most of the results of this section hold without that hypothesis.  

We recall that a \emph{graded vector space} is a vector space $W$ provided with a direct sum decomposition of the form $W = \oplus_{n \in \ZZ} W_{m}$. 
A nonzero element $w$ in $W_{m}$ is called \emph{homogeneous of degree $m$}, and we will write $|w| = m$.
A \emph{morphism of graded vector spaces of degree $d$}  from $W$ to $W'$ is just a linear map $f : W \rightarrow W'$ such that $f(W_{m}) \subseteq W'_{m+d}$, for all $m \in \ZZ$. 
If we omit the degree of the morphism it will be understood that $d = 0$. 
A \emph{bigraded (co)homological vector space} is a vector space $W$ together with a direct sum decomposition given by $W = \oplus_{(n,m) \in \ZZ^{2}} W_{n,m}$ (resp., $W = \oplus_{(n,m) \in \ZZ^{2}} W^{n}_{m}$. 
We may consider a graded vector space $W = \oplus_{m \in \ZZ} W_{m}$ as a (co)homological bigraded vector space concentrated in (co)homological degree $0$, 
\textit{i.e.} $W_{n,m} = W_{m}$ (resp., $W^{n}_{m} = W_{m}$) if $n = 0$, and $W_{n,m}$ (resp., $W^{n}_{m}$) vanishes otherwise. 
Given a nonzero element $w \in W_{n,m}$ (resp., $w \in W^{n}_{m}$) we will say that it has \emph{(co)homological degree} $n$ and \emph{Adams degree} $m$ 
(this terminology comes from algebraic topology, see for instance \cite{KM}, p. 38), or, simply, \emph{bidegree} $(n,m)$. 
The first will be usually denoted by $\deg(w)$, whereas the second one will be written as $|w|$. 
A \emph{morphism of (co)homological bigraded vector spaces of (co)homological degree $d$ and Adams degree $d'$ (or \emph{bidegree} $(d,d')$)} from $W$ to $W'$ is just a linear map 
$f : W \rightarrow W'$ such that $f(W_{n,m}) \subseteq W'_{n+d,m+d'}$, for all $n, m \in \ZZ$. 
If we omit the degree of the morphism it will be understood that $d = d'= 0$. 
If we talk about the degree of an element or a map, it is implicitly assumed that it is homogeneous. 
If $W = \oplus_{(n,m) \in \ZZ^{2}} W_{n,m}$ is a homological bigraded vector space, it will be also considered as a cohomological bigraded vector space where $W^{n}_{m} = W_{-n,m}$, 
for all $n, m \in \ZZ$, and vice versa. 
By abuse of notation we may thus say that $w \in W_{n,m}$ has cohomological degree $-n$. 

If $W$ is a graded vector space, we may consider the graded vector space $W^{\#}$, called the \emph{graded dual}, 
which has $n$-th homogeneous component $(W_{-n})^{*}$, where $(\place)^{*}$ denotes the usual dual vector space operation. 
We will also consider the analogous graded dual construction $(\place)^{\#}$ in the category of (co)homological bigraded vector spaces, where in that case $W^{\#}$ has component of 
(co)homological degree $n$ and Adams degree $m$ given by $(W_{-n,-m})^{*}$ (resp., $(W^{-n}_{-m})^{*}$). 
If $f : W \rightarrow W'$ is morphism of graded vector spaces of degree $d$, we get another morphism $f^{\#}$ of graded vector spaces of the same degree whose restriction to $(W'_{-n})^{*}$ 
is given by $(f|_{W_{-n-d}})^{*}$. 
If $f : W \rightarrow W'$ is morphism of (co)homological bigraded vector spaces of (co)homological degree $d$, define the morphism $f^{\#}$ of (co)homological bigraded vector spaces of the same (co)homological degree given by $f^{\#} (\lambda) = (-1)^{d \deg(\lambda)} \lambda \circ f$, for $\lambda$ homogeneous.
Note that $(g \circ f)^{\#} = f^{\#} \circ g^{\#}$ if $f, g$ are morphisms of graded vector spaces such that the composition makes sense, 
but $(g \circ f)^{\#} = (-1)^{\deg(f) \deg{g}} f^{\#} \circ g^{\#}$, for $f, g$ morphisms of (co)homological bigraded vector spaces such that the composition makes sense.
We will thus apply the Koszul sign rule to the (co)homological degree but not to the Adams degree. 
This means in particular that, if $W_{1}, \dots, W_{n}$ are locally finite dimensional (co)homological bigraded vector spaces, 
there is an isomorphism $c_{W_{1}, \dots, W_{n}} : W_{1}^{\#} \otimes \dots \otimes W_{n}^{\#} \rightarrow (W_{1} \otimes \dots \otimes W_{n})^{\#}$ 
of the form 
\[     c_{W_{1}, \dots, W_{n}}(f_{1} \otimes \dots \otimes f_{n})(w_{1} \otimes \dots \otimes w_{n}) = (-1)^{s} f_{1}(w_{1}) \dots f_{n}(w_{n}),     \] 
where $s = \sum_{i=1}^{n-1} \deg(w_{i}) (\deg(f_{i+1}) + \dots \deg(f_{n}))$. 
However, if we refrain from applying the Koszul sign rule to any degree whatsoever of a homological bigraded vector space, we will just call it \emph{bigraded vector space}. 

From now on, $A$ will always denote a nonnegatively graded connected algebra with unit $1$. 
We shall further assume that the underlying graded vector space of $A$ is locally finite dimensional, \textit{i.e.} $A = \oplus_{n \in \NN_{0}} A_{n}$, where 
$\mathrm{dim}_{k}(A_{n}) < \infty$, for all $n \in \NN_{0}$, and we shall follow the typical convention $A_{n} = 0$, for $n < 0$. 
This means that there exists a locally finite dimensional positively graded vector space $V$ 
and a surjective morphism of graded algebras of the form $\pi : T(V) \rightarrow A$, 
so $A \simeq T(V)/I$, where $I \subseteq T(V)$ is a homogeneous ideal of $T(V)$. 
Note that for any graded vector space $V$ the tensor algebra $T(V) = \oplus_{N \in \NN_{0}} V^{(N)}$ is provided with two compatible gradings, 
other grading coming from the previous direct sum decomposition, which we shall call \emph{special} or \emph{tensor}, and whose 
$N$-th homogeneous component will be also denoted by $T(V)^{N}$, and another one coming from the grading on $V$, which we call \emph{usual} 
and whose $n$-th homogeneous components will be denoted by $T(V)_{n}$. 
To avoid redundancy we will always assume that the vector space $V$ is canonically isomorphic to $A_{>0}/(A_{>0} . A_{>0})$ (as graded vector spaces). 
In this case we further have that $I \subseteq T(V)^{\geq 2} = T(V)_{>0} . T(V)_{>0}$. 
Let us denote by $R$ a \emph{space of relations} of $A$, \textit{i.e.} a graded vector subspace of $I$ which is isomorphic to 
$I/(T(V)_{>0} . I + I . T(V)_{>0})$ under the canonical projection. 
It can be equivalently defined as a graded vector subspace $R \subseteq I$ satisfying that the ideal $\cl{R}$ of $T(V)$ generated by $R$ coincides with $I$ and that 
\begin{equation}
\label{eq:sprel}
      R \cap (T(V)_{>0} \otimes R \otimes T(V) + T(V) \otimes R \otimes T(V)_{>0}) = 0. 
\end{equation}
Notice that the Hilbert series of $R$ is thus uniquely determined, and the same holds for its first nonvanishing homogeneous component. 
We may thus suppose that $A = T(V)/\cl{R}$, for $R \subseteq T(V)^{\geq 2}$ a graded vector subspace for the usual grading. 
Note that the assumption that $A$ is locally finite dimensional implies that $V$ (hence $T(V)$) is a locally finite dimensional nonnegatively graded vector space, 
and then $R$ is also so. 

A \emph{graded left (resp., right) $A$-module} $M=\bigoplus_{n \in \ZZ} M_{n}$ is a graded vector space together with a left (resp., right) action of $A$ on $M$ 
such that $A_{m} M_{n} \subseteq M_{m+n}$ (resp., $M_{m} A_{n} \subseteq M_{m+n}$), and we shall sometimes refer to them simply as \emph{left (resp., right) $A$-modules}. 
Moreover, in the rest of the section we will mostly deal with left modules, which may be thus called just \emph{modules} (or \emph{graded modules}), 
even though all the considerations apply verbatim to right modules.
We will denote by $A$-$\grMod$ the abelian category of graded left $A$-modules, where the morphisms are the $A$-linear maps preserving the grading. 
The space of morphisms in this category between two graded left $A$-modules $M$ and $M'$ will be denoted by $\hom_{A}(M,M')$. 
This category is provided with a \emph{shift functor} $(\place)[1]$ defined by $(M[1])_{n} = M_{n+1}$, where the underlying left $A$-module structure of $M[1]$ is the same as the one of $M$, 
and the action of the morphisms is trivial. 
We shall also denote $(\place)[d]$ the $d$-th iteration of the shift functor. 
The graded left $A$-module $M$ is said to be \emph{left bounded}, or also \emph{bounded below}, if there exists an integer $n_{0}$ such that $M_{n}=0$ for all $n<n_{0}$. 
Notice that the graded left $A$-modules which are left bounded form a full exact subcategory of $A$-$\grMod$.

Given graded left $A$-modules $N$ and $N'$, we recall the following notation:
\[     \mathcal{H}om_{A} (N,N') = \bigoplus\limits_{d\in \mathbb{Z}} \hom_{A} (N,N'[d]).     \]
We remark that, if $N$ is finitely generated, then $\mathcal{H}om_{A} (N,N') = \Hom_{A} (N,N')$, 
where the last morphism space is the usual one for left $A$-modules by forgetting the gradings (see \cite{NV}, Cor. 2.4.4).  

The following result is the graded version of the Nakayama Lemma.
%%%%%%%
\begin{lemma} 
\label{lemma:trivialmod}
Let $M$ be a left bounded graded left $A$-module. 
If $k \otimes_{A} M=0$, then $M$ is also trivial.
\end{lemma}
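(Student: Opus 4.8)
The plan is to interpret the hypothesis concretely and then run the standard bottom-degree argument that underlies every graded Nakayama lemma. The trivial module $k$ is $A/A_{>0}$ viewed as a right $A$-module, so the right-exactness of the tensor product furnishes a canonical isomorphism of graded vector spaces
\[
    k \otimes_{A} M \cong M/(A_{>0} \cdot M),
\]
where $A_{>0} \cdot M$ denotes the image of the multiplication map $A_{>0} \otimes_{k} M \to M$, which is a graded $A$-submodule of $M$. Hence the hypothesis $k \otimes_{A} M = 0$ is equivalent to the equality $M = A_{>0} \cdot M$, and this is the form I would actually work with.

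Next I would argue by contradiction. Suppose $M \neq 0$. Since $M$ is left bounded, there is a smallest integer $n_{0}$ with $M_{n_{0}} \neq 0$, so that $M_{n} = 0$ for every $n < n_{0}$. I then examine the degree-$n_{0}$ homogeneous component of the equality $M = A_{>0} \cdot M$. Because the action respects the grading, i.e. $A_{i} M_{j} \subseteq M_{i+j}$, one has
\[
    M_{n_{0}} = (A_{>0} \cdot M)_{n_{0}} = \sum_{i > 0} A_{i} \cdot M_{n_{0} - i}.
\]
For each $i > 0$ the target index satisfies $n_{0} - i < n_{0}$, whence $M_{n_{0} - i} = 0$ by the minimality of $n_{0}$. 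Therefore the right-hand side vanishes, forcing $M_{n_{0}} = 0$, which contradicts the choice of $n_{0}$. Consequently $M = 0$, that is, $M$ is trivial.

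I do not expect any genuine obstacle here; the content is entirely the bottom-degree bookkeeping, and the only points requiring a little care are the identification of $k \otimes_{A} M$ with the cokernel of the multiplication map (using that $k = A/A_{>0}$ together with right-exactness) and the use of the left-bounded hypothesis to guarantee the existence of a minimal nonzero degree. Note that the local finite dimensionality of $A$ plays no role in this argument; only the connectedness of $A$ (so that $A_{>0}$ is the correct augmentation ideal) and the bounded-below assumption on $M$ are needed, the latter being exactly what rules out the usual failure of Nakayama for arbitrary graded modules.
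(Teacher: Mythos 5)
Your proof is correct, and it is essentially the paper's approach: the paper gives no argument of its own for this lemma, referring instead to \cite{B4}, Lemme 1.3 (and \cite{C}, Exp.\ 15, Prop.\ 6), where precisely your standard bottom-degree Nakayama argument appears. The identification $k \otimes_{A} M \simeq M/(A_{>0} \cdot M)$ via right-exactness and the minimal-degree contradiction using left-boundedness are exactly the intended proof, and your observation that local finite dimensionality of $A$ is not needed is consistent with the paper's own remark at the start of Section 2.
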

%%%%%%%
\begin{proof}
See \cite{B4}, Lemme 1.3 (see also \cite{C}, Exp. 15, Prop. 6). 
\end{proof}

The left $A$-module $M$ is said to be \emph{graded-free} if it is isomorphic to a direct sum of shifs $A[-l_{i}]$ of $A$. 
We remark that a bounded below graded left $A$-module $M$ is graded-free if and only if its underlying module (\textit{i.e.} forgetting the grading) is free, 
if and only if it is projective (as a graded module or not), if and only if $\Tor_{\bullet}^{A}(k,M) = 0$, for all $\bullet \geq 1$ (or just $\bullet = 1$). 
This will follow from the comments on projective covers. 
%Furthermore, it is easy to see that the graded dual of a projective graded left (resp., right) $A$-module is an injective graded right (resp., left) $A$-module (see \cite{C}, Exp. 15, Prop. 1). 

A surjective morphism $f : M \rightarrow M'$ in $A$-$\grMod$ is called \emph{essential} if for each morphism $g : N \rightarrow M$ in $A$-$\grMod$ such that $f\circ g$ is surjective, then $g$ is also surjective. 
As an application of the Nakayama Lemma we have the following result which characterizes essential surjective maps. 
%%%%%%%
\begin{lemma}
\label{lem:biyectiontensor}
Let $f : M \rightarrow M'$ be a morphism in the category of graded left (resp., right) $A$-modules.
Suppose that $M'$ is left bounded and that $f$ is surjective and essential. 
Then $\mathrm{id}_k \otimes_{A} f$ (resp., $f \otimes_{A} \mathrm{id}_{k}$) is bijective. 
Moreover, if $M$ is also left bounded, the converse holds.
\end{lemma}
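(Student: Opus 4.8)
The plan is to funnel everything through the single algebraic condition $\Ker f \subseteq A_{>0} M$, by showing that it is equivalent, on one hand, to the bijectivity of $\mathrm{id}_{k} \otimes_{A} f$, and on the other, to $f$ being a surjective essential morphism. Throughout I would use the canonical identifications $k \otimes_{A} M \cong M/(A_{>0} M)$ and $k \otimes_{A} M' \cong M'/(A_{>0} M')$, under which $\mathrm{id}_{k} \otimes_{A} f$ is simply the map induced by $f$ on these quotients, together with the fact that $f(A_{>0} M) = A_{>0} f(M)$ for any graded $A$-linear $f$.

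First I would dispatch surjectivity in both directions. Since $k \otimes_{A} (\place)$ is right exact, if $f$ is surjective then $\mathrm{id}_{k} \otimes_{A} f$ is surjective, with no boundedness needed. Conversely, if $\mathrm{id}_{k} \otimes_{A} f$ is bijective then $\im f$ has a cokernel which is left bounded (being a quotient of the left bounded $M'$) and satisfies $k \otimes_{A} (M'/\im f) = \mathrm{coker}(\mathrm{id}_{k} \otimes_{A} f) = 0$, so Lemma \ref{lemma:trivialmod} forces $M'/\im f = 0$, i.e.\ $f$ is surjective. Thus in both implications I may assume $f$ surjective, whence $f(A_{>0} M) = A_{>0} M'$.

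The next, elementary, step is that for surjective $f$ the map $\mathrm{id}_{k} \otimes_{A} f$ is injective if and only if $\Ker f \subseteq A_{>0} M$: the implication ``$\Rightarrow$'' is immediate since each element of $\Ker f$ maps to $0 \in A_{>0} M'$, while for ``$\Leftarrow$'', if $f(m) \in A_{>0} M' = f(A_{>0} M)$ then $m$ differs from an element of $A_{>0} M$ by one of $\Ker f \subseteq A_{>0} M$, so $m \in A_{>0} M$ (no boundedness used). It then remains to match $\Ker f \subseteq A_{>0} M$ with essentiality. For the converse direction, where $M$ is left bounded, I would show $\Ker f \subseteq A_{>0} M \Rightarrow f$ essential: given $g : N \to M$ with $f \circ g$ surjective one has $M = \im g + \Ker f \subseteq \im g + A_{>0} M$, so $M/\im g$ is left bounded with $k \otimes_{A} (M/\im g) = 0$, and Lemma \ref{lemma:trivialmod} gives $\im g = M$. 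For the first assertion I would argue conversely that the kernel of an essential surjection is superfluous (if $N + \Ker f = M$ then $f|_{N}$ is onto, hence $N = M$), and that a superfluous graded submodule lies in $A_{>0} M$ because $A_{>0} M$ is exactly the intersection of the maximal graded submodules of $M$.

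I expect this essential-to-algebraic passage to be the main obstacle: recognizing $A_{>0} M$ as the graded radical of $M$, so that superfluous submodules are forced inside it, is the only delicate point, precisely because for the first assertion $M$ itself is not assumed left bounded. The way around this is to note that a graded-simple quotient is generated by a single homogeneous element and is therefore automatically left bounded, so that Lemma \ref{lemma:trivialmod} still applies to each simple quotient and identifies $A_{>0} M$ with the graded radical. Everything else is right-exactness of $k \otimes_{A} (\place)$ together with repeated, routine applications of the graded Nakayama Lemma \ref{lemma:trivialmod}.
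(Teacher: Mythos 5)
Your proposal is correct, but there is no in-text proof in the paper to compare it with: the paper simply cites \cite{B4}, Lemme 1.5, and \cite{C}, Exp. 15, Prop. 7. Your self-contained argument follows the standard mechanism behind those references: everything is funnelled through the condition $\Ker f \subseteq A_{>0}M$; right-exactness of $k \otimes_{A} (\place)$ handles surjectivity in both directions; and the converse implication uses the graded Nakayama lemma (Lemma \ref{lemma:trivialmod}) exactly where left-boundedness is available (on $\mathrm{coker}(f)$, which is a quotient of $M'$, and on $M/\im g$, which is a quotient of $M$). A pleasant byproduct of your route is that the forward implication never actually uses the left-boundedness of $M'$, so you prove slightly more than the statement asks.

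The one place where your justification misfires—without invalidating the proof—is the radical identification at the end. Applying Lemma \ref{lemma:trivialmod} to a graded-simple quotient (bounded below because it is generated by a single homogeneous element) shows that graded-simple modules are shifts of $k$, hence that every maximal graded submodule of $M$ contains $A_{>0}M$; that is the inclusion $A_{>0}M \subseteq \mathrm{rad}(M)$. But the step ``a superfluous graded submodule lies in $A_{>0}M$'' requires the \emph{reverse} inclusion $\mathrm{rad}(M) \subseteq A_{>0}M$, i.e.\ the existence of enough maximal graded submodules, and this is supplied not by Nakayama but by elementary linear algebra: given $x \in M \setminus A_{>0}M$, pick a nonzero homogeneous component $\bar{x}_{d}$ of its image in $M/A_{>0}M$, choose a hyperplane $H \subseteq (M/A_{>0}M)_{d}$ with $\bar{x}_{d} \notin H$, and pull back the graded subspace $H \oplus \bigl(\bigoplus_{i \neq d} (M/A_{>0}M)_{i}\bigr)$ to $M$; the quotient is $k[-d]$, so the pullback is a maximal graded submodule avoiding $x$. (Equivalently, you can bypass radicals altogether: if $K = \Ker f$ is superfluous and $K \not\subseteq A_{>0}M$, this same pullback $N$ satisfies $K + N = M$ with $N \neq M$, contradicting superfluity.) With this two-line repair—which uses no boundedness, consistent with your observation—your argument is complete.
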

%%%%%%%
\begin{proof}
See \cite{B4}, Lemme 1.5 (see also \cite{C}, Exp. 15, Prop. 7). 
\end{proof}

Let $M$ be a nontrivial object in $A$-$\grMod$. 
A \emph{projective cover} of $M$ is a pair $(P,f)$ such that $P\in$ $A$-$\grMod$ is projective and $f:P\rightarrow M$ is an essential surjective morphism.
We remark that every left bounded graded left $A$-module $M$ has a projective cover, which is unique up to (noncanonical) isomorphism (\textit{cf.} \cite{C}, Exp. 15, Thm. 2).
Moreover, given $M$ a bounded below left $A$-module, a projective cover may be explicitly constructed as follows. 
Since $M \neq 0$, the Nakayama lemma tells us that $M/(A_{>0} . M) \simeq k \otimes_{A} M$ is a nontrivial graded vector space. 
Consider a section $s$ of the canonical projection $M \rightarrow M/(A_{>0} . M) \simeq k \otimes_{A} M$. 
Now, we define $P = A \otimes (k \otimes_{A} M)$ together with the $A$-linear map $f : P \rightarrow M$ given by $f(a \otimes v) = a s(v)$, 
for $a \in A$ and $v \in k \otimes_{A} M$. 
Using the previous lemma one directly gets that $(P,f)$ is a projective cover of $M$. 

Recall that a (graded) projective resolution $(P_{\bullet},d_{\bullet})$ of a graded left $A$-module $M$ is \emph{minimal} if 
$d_{0} : P_{0} \rightarrow M$ is a projective cover (or equivalently, it is essential) and each of the maps $P_{i} \rightarrow \Ker(d_{i-1})$ induced  by $d_{i}$ is also essential, 
for all $i \in \NN$. 
We want to remark the important fact that, by iterating the process of considering projective covers for bounded below modules, 
one may easily prove that any bounded below graded left $A$-module has a minimal projective resolution (see \cite{B4}, Thm. 1.11).
If the left $A$-module $M$ has a minimal projective resolution $(P_{\bullet},d_{\bullet})$, for any other projective resolution $(Q_{\bullet},d'_{\bullet})$ of $M$, there exists an 
isomorphism of (augmented) complexes $Q_{\bullet} \simeq P_{\bullet} \oplus H_{\bullet}$, where $H_{\bullet}$ is acyclic (see \cite{B4}, Prop. 2.2). 
Additionally, the minimality assumption on the projective resolution implies that the differential of the induced complex $k \otimes_{A} P_{\bullet}$ vanishes (see \cite{C}, Exp. 15, Prop. 10, 
or \cite{B4}, Prop. 2.3), so if $(P_{\bullet},d_{\bullet})$ denotes such a minimal projective resolution, one also easily gets that $P_{\bullet} \simeq A \otimes \Tor_{\bullet}^{A} (k,M)$. 
Combining the results of the two previous sentences, it is trivial to see that if $(Q_{\bullet},d'_{\bullet})$ is a projective resolution of a graded left $A$-module $M$ 
having a minimal projective resolution, then the former is minimal if and only if the induced differential of $k \otimes_{A} Q_{\bullet}$ vanishes. 

If $N$ is left bounded, let $(P_{\bullet},d_{\bullet})$ be a (minimal) graded projective resolution of $N$. 
As usual, we denote
\[     \mathcal{E}xt_{A}^{i} (N,N') = H^{i} (\mathcal{H}om_{A} (P_{\bullet},N')),     \]
If the projective resolution of $N$ is composed of finitely generated projective left $A$-modules, there is a canonical identification 
$\mathcal{E}xt_{A}^{\bullet} (N,N') \simeq \Ext_{A}^{\bullet} (N,N')$. 
Moreover, using a very simple duality argument one can see that, if $M$ is a bounded below graded left $A$-module, then there is a canonical isomorphism of graded vector spaces 
\begin{equation}
\label{eq:isotorext}
     \mathcal{E}xt_{A}^{i}(M,k) \simeq \Tor^{A}_{i}(k,M)^{\#},
\end{equation}   
for all $i \in \NN_{0}$ (see \cite{B4}, Eq. (2.15), but \textit{cf.} also \cite{C}, Exp. 15, Prop. 2). 

We recall the beginning of the minimal projective resolution of the trivial left $A$-module $k$ for any nonnegatively graded connected algebra $A$. 
The analogous statements for the trivial right $A$-module $k$ are immediate. 
We know that it starts as 
\[
A \otimes V \overset{\delta_{1}}{\longrightarrow} A \overset{\delta_{0}}{\longrightarrow} k \longrightarrow 0,
\]
where $\delta_{0}$ is the augmentation of the algebra $A$, $V \simeq A_{>0}/(A_{>0} . A_{>0})$ is the vector space spanned by a minimal set of (homogeneous) generators of $A$ 
indicated before, and $\delta_{1}$ is the restriction of the product of $A$ (see \cite{C}, Exp. 15, end of Section 7). 
Furthermore, it is also well-known (and follows easily from the definition) that $\Ker(\delta_{1}) \simeq I/(I \otimes V)$ 
(as graded vector spaces), so there is an isomorphism of graded vector spaces $k \otimes_{A} \Ker(\delta_{1}) \simeq I/(T(V)_{>0} \otimes I + I \otimes T(V)_{>0})$, 
\textit{i.e.} we have an isomorphism of graded vector spaces $k \otimes_{A} \Ker(\delta_{1}) \simeq R$ 
(see \cite{Go}, Lemma 1, for complete expressions of the graded vector spaces $\Tor_{\bullet}^{A}(k,k)$, for $\bullet \in \NN_{0}$, in terms of $I$ and the irrelevant ideal $T(V)_{>0}$). 
Hence, $A \otimes R \rightarrow \Ker(\delta_{1})$ is a projective cover, and the beginning of the minimal projective resolution of the trivial $A$-module $k$ is of the form 
\[
A \otimes R \overset{\delta_{2}}{\longrightarrow} A \otimes V \overset{\delta_{1}}{\longrightarrow} A \overset{\delta_{0}}{\longrightarrow} k \longrightarrow 0,
\]
where $\delta_{2}$ is induced by the usual map $a \otimes v_{1} \dots v_{n} \mapsto a v_{1} \dots v_{n-1} \otimes v_{n}$ (see \cite{Le}, Lemme 1.2.11). 
Moreover, both left and right minimal projective resolutions of $k$ regarded as a left or right $A$-module respectively can be obtained from the minimal projective resolution 
$(P_{\bullet}^{b},\delta_{\bullet}^{b})$ of $A$ as an $A^{e}$-module easily. 
Indeed, the minimal projective resolution of the trivial left (resp., right) $A$-module $k$ is given by $P_{\bullet}^{b} \otimes_{A} k$ (resp., $k \otimes_{A} P_{\bullet}^{b}$). 
The fact that it is projective resolution of the trivial left (resp., right) $A$-module $k$ follows easily from the standard fact (see for instance \cite{BM}, Prop. 4.1), that, 
since the augmented complex $P_{\bullet}^{b} \rightarrow A$ is an exact and bounded below complex of projective right $A$-modules, it is homotopically trivial as a complex of right $A$-modules, 
so $P_{\bullet}^{b} \otimes_{A} k \rightarrow k$ is also homotopically trivial, and thus exact. 
To prove the minimality claim we use the fact we have already explained: a projective resolution $Q_{\bullet}$ of a bounded below graded left (resp., right) module $M$ over a locally finite dimensional nonnegatively graded algebra $B$ is minimal if and only if $k \otimes_{A} Q_{\bullet}$ (resp., $Q_{\bullet} \otimes_{A} k$) has vanishing differential. 
Now, by the minimality of $P_{\bullet}^{b}$, we get that $P_{\bullet}^{b} \otimes_{A^{e}} k = k \otimes_{A} P_{\bullet}^{b} \otimes_{A} k$ has vanishing differential, 
which further implies that $P_{\bullet} = P_{\bullet}^{b} \otimes_{A} k$ is minimal.   
Moreover, it is easy to see that the beginning the trivial $A$-bimodule $A$ is given by  
\begin{equation}
\label{eq:resprojminbi}
A \otimes R \otimes A \overset{\delta_{2}^{b}}{\longrightarrow} A \otimes V \otimes A \overset{\delta_{1}^{b}}{\longrightarrow} A \otimes A \overset{\delta_{0}^{b}}{\longrightarrow} A \longrightarrow 0,
\end{equation}
where $\delta_{0}^{b}$ is given by the multiplication of $A$, $\delta_{1}^{b}$ is determined by $\delta_{1}^{b}(a \otimes v \otimes a') = a v \otimes a' - a \otimes v a'$ 
and $\delta_{2}^{b}$ is the linear extension induced by $a \otimes v_{1} \dots v_{r} \otimes a' \mapsto \sum_{j=1}^{r} a v_{1} \dots v_{j-1} \otimes v_{j} \otimes v_{j+1} \dots v_{r} a'$.

%%%%%%%%%%%%%%%%%%%%%%%%%%%%%%%%%%%%%%%%%%%%%%%%%%%%%%%%%%%%%%%%%%%%%%%%%%%%%%%%%%%%%%%%%%%%%%%%%%%%%%%%%%%%%%%%%%%%%%%%%%%%%%%%%%%%%%%%%%%%%%%%%%%%%%%%%%%%%%%
\section{\texorpdfstring{Multi-Koszul algebras}{Multi-Koszul algebras}}
\label{sec:multikoszul}

%%%%%%%%%%%%%%%%%%%%%%%%%%%%%%%%%%%%%%%%%%%%%%%%%%%%%%%%%%%%%%%%%%%%%%%%%%%%%%%%%%%%%%%%%%%%%%%%%%%%%%%%%%%%%%%%%%%%%%%%%%%%%%%%%%%%%%%%%%%%%%%%%%%%%%%%%%%%%%%
\subsection{\texorpdfstring{Auxiliary results on subspaces of the tensor algebra}{Auxiliary results on subspaces of the tensor algebra}}
\label{subsec:inter}

We recall the following obvious property. 
%%%%%%%
\begin{fact}
\label{fact:inter}
Let $V, W$ be two graded vector subspaces of a graded vector space $U = \oplus_{n \in \ZZ} U_{n}$. 
Then, the intersection $V \cap W$ is also a graded vector subspace of $U$ such that $(V \cap W)_{n} = V_{n} \cap W_{n}$, for $n \in \ZZ$. 
\end{fact}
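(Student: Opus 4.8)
The plan is to reduce everything to unwinding the definition of a graded vector subspace. Recall that a subspace $V \subseteq U$ is \emph{graded} precisely when $V = \bigoplus_{n \in \ZZ} (V \cap U_{n})$, or equivalently when every element of $V$ has all of its homogeneous components, with respect to the ambient decomposition $U = \bigoplus_{n \in \ZZ} U_{n}$, again lying in $V$; in that case the $n$-th homogeneous component is $V_{n} = V \cap U_{n}$, and likewise $W_{n} = W \cap U_{n}$. So I would begin by stating this characterization, since both halves of the claim follow from it directly.

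First I would dispose of the asserted equality of homogeneous components, which is purely set-theoretic and requires no gradedness at all. Setting $(V \cap W)_{n} = (V \cap W) \cap U_{n}$ by definition, one has
\[
      (V \cap W) \cap U_{n} = (V \cap U_{n}) \cap (W \cap U_{n}) = V_{n} \cap W_{n},
\]
using only $V_{n} = V \cap U_{n}$ and $W_{n} = W \cap U_{n}$. The actual content is then to verify that $V \cap W$ really is a graded subspace, i.e. that $V \cap W = \bigoplus_{n \in \ZZ} (V_{n} \cap W_{n})$. The inclusion $\supseteq$ is immediate, since $V_{n} \cap W_{n} \subseteq V \cap W$ for every $n$. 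For the inclusion $\subseteq$, I would take an arbitrary $x \in V \cap W$ and write $x = \sum_{n} x_{n}$ with $x_{n} \in U_{n}$ for its homogeneous decomposition in $U$ (with only finitely many $x_{n}$ nonzero). Since $x \in V$ and $V$ is graded, each $x_{n} \in V$, hence $x_{n} \in V \cap U_{n} = V_{n}$; arguing identically with $W$ in place of $V$ gives $x_{n} \in W_{n}$. Therefore $x_{n} \in V_{n} \cap W_{n}$ for all $n$, whence $x \in \bigoplus_{n \in \ZZ} (V_{n} \cap W_{n})$, which completes the proof.

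There is essentially no genuine obstacle here, as befits a statement labelled a \emph{fact}. The only point that deserves care is to invoke the defining closure property of a graded subspace separately for $V$ and for $W$, so that a single homogeneous component $x_{n}$ of an element of the intersection is recognized as belonging simultaneously to $V_{n}$ and to $W_{n}$; everything else is formal.
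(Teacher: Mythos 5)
Your proof is correct and complete: the paper states this as an obvious fact and provides no proof at all, so your unwinding of the definition (intersecting homogeneous components, then decomposing an arbitrary $x \in V \cap W$ into its components $x_{n}$ and using gradedness of $V$ and $W$ separately) is exactly the standard argument the author had in mind. No gaps.
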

%%%%%%%

Let $\mu$ denote the multiplication of the tensor algebra $T(V)$, and, for $n \geq 2$, $\mu^{(n)} : T(V)^{\otimes n} \rightarrow T(V)$ the $(n-1)$-th iteration of $\mu$, 
\textit{i.e.} it is defined by the recursive process given by $\mu^{(2)} = \mu$ and $\mu^{(i+1)} = \mu \circ (\mathrm{id}_{T(V)} \otimes \mu^{(i)})$, for $i \in \NN_{\geq 2}$. 
Note that $\mu^{(n)}$ is surjective (if $V$ is nontrivial) but not injective, for all $n \in \NN_{\geq 2}$. 
Given $W_{1}, \dots, W_{m}$ vector subspaces of $T(V)$, we shall denote by $W_{1} \dots W_{m}$ the vector subspace of $T(V)$ 
given by the image of the vector subspace $W_{1} \otimes \dots \otimes W_{m} \subseteq T(V)^{\otimes m}$ under the map $\mu^{(m)}$. 
Since the product of the tensor algebra is denoted by the $\otimes$ symbol, it is usual to denote this image also by $W_{1} \otimes \dots \otimes W_{m}$, 
even though this product differs in principle from the usual tensor product of the vector spaces $W_{1}, \dots, W_{m}$ 
(\textit{e.g.} take $W_{1} = \mathrm{span}_{k}\cl{x,x^{2}}$, and $W_{2} = \mathrm{span}_{k}\cl{x^{2},x^{3}}$. 
Then, the vector subspace $W_{1} . W_{2}$ of $T(V)$ has basis $\{ x^{3}, x^{4}, x^{5}\}$, whereas the usual tensor product $W_{1} \otimes W_{2}$ has dimension $4$).  
We shall also adopt this convention when we are talking about vector subspaces of the tensor algebra, unless we explicitly state otherwise.
This notation is coherent in the sense that, given $W_{1}, \dots, W_{m}$ vector subspaces of $T(V)$, $W_{1} \otimes \dots \otimes W_{m}$ denotes the vector subspace of $T(V)$ 
that consists of the elements of $T(V)$ given by the sums of terms of the form $w_{1} \otimes \dots \otimes w_{m}$, where $w_{i} \in W_{i}$, 
and the $\otimes$ symbol indicates the product of the tensor algebra $T(V)$. 

A word of caution should be stated here. 
Given $W_{1}, \dots, W_{m}$ vector subspaces of $T(V)$ as before, and linear maps $f_{i} : W_{i} \rightarrow W'_{i}$, for $i = 1, \dots, m$, 
where $W'_{i}$ are arbitrary vector spaces, if we use the previous notation then there may be no canonical manner to define a map 
$f_{1} \otimes f_{m} : W_{1} \otimes \dots \otimes W_{m} \rightarrow W'_{1} \otimes \dots \otimes W'_{m}$, 
since an element of $W_{1} \otimes \dots \otimes W_{m}$ could be written inside $T(V)$ in inequivalent manners (see however Corollary \ref{corollary:maps}). 
Note that, since we are not assuming that the vector spaces $W'_{1}, \dots, W'_{m}$ are vector subspaces of a tensor algebra, $W'_{1} \otimes \dots \otimes W'_{m}$ denotes the usual tensor product.    

Let $U \subseteq T(V)$ be a vector subspace of the tensor algebra on a vector space $V$. 
We say that $U$ is \emph{left tensor-intersection faithful} (resp., \emph{right tensor-intersection faithful}) if $(U \cap W_{1}) \cap (U \otimes W_{2}) = U \otimes (W_{1} \cap W_{2})$ (resp., $(W_{1} \otimes U) \cap (W_{2} \otimes U) = (W_{1} \cap W_{2}) \otimes U$), for all vector subspaces $W_{1}, W_{2} \subseteq T(V)$. 
Moreover, $U$ is said to be \emph{tensor-intersection faithful} if it is left and right tensor-intersection faithful. 

We have the following simple characterization of tensor-intersection faithfulness, that we will use extensively in the sequel. 
We suspect that it is well-known among the experts but we were unable to find any reference whatsoever. 
%%%%%%%
\begin{proposition}
\label{proposition:imp}
Let $U \subseteq T(V)$ be a vector subspace of the tensor algebra. 
Then, the following conditions are equivalent:
\begin{itemize} 
\item[(i)] $U$ is left (resp., right) tensor-intersection faithful. 

\item[(ii)] $U \cap (U \otimes T(V)_{>0}) = 0$ (resp., $U \cap (T(V)_{>0} \otimes U) = 0$). 

\item[(iii)] Given $I$ a finite set of indices, a linearly independent family $\{ u_{i} : i \in I \} \subseteq U$, and arbitrary elements $w_{i} \in T(V)$, for $i \in I$, 
if $\sum_{i \in I} u_{i} \otimes w_{i}$ (resp., $\sum_{i \in I} w_{i} \otimes u_{i}$) vanishes, then $w_{i} = 0$, for all $i \in I$. 
\end{itemize}
\end{proposition}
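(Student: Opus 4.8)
The plan is to prove the equivalences for the left case by establishing the cyclic chain $(iii)\Rightarrow(i)\Rightarrow(ii)\Rightarrow(iii)$; the right case then follows by the symmetric argument (peeling off the first tensor factor instead of the last, or equivalently by passing to the opposite algebra). The useful first move is to reinterpret $(iii)$: fixing a basis $\{u_i\}_{i\in I}$ of $U$, every element of the genuine tensor product $U\otimes_k T(V)$ is uniquely $\sum_i u_i\otimes_k w_i$, so $(iii)$ says exactly that the multiplication map $m:U\otimes_k T(V)\to T(V)$, $m(u\otimes_k w)=u\otimes w$ (the target $\otimes$ being the product of $T(V)$), is injective. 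I record that $m$ is right $T(V)$-linear, though this is not strictly needed.

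For $(iii)\Rightarrow(i)$ I would argue as follows. The inclusion $U\otimes(W_1\cap W_2)\subseteq (U\otimes W_1)\cap(U\otimes W_2)$ is automatic. Conversely, given $x$ in the right-hand intersection, I would expand it through the basis of $U$ both as $x=\sum_i u_i\otimes a_i$ with $a_i\in W_1$ and as $x=\sum_i u_i\otimes c_i$ with $c_i\in W_2$ (over one common finite index set, allowing zero coefficients); subtracting gives $\sum_i u_i\otimes(a_i-c_i)=0$, whence $(iii)$ forces $a_i=c_i\in W_1\cap W_2$ and thus $x\in U\otimes(W_1\cap W_2)$. The implication $(i)\Rightarrow(ii)$ is then immediate: specializing the identity in $(i)$ to $W_1=k$ and $W_2=T(V)_{>0}$, where $W_1\cap W_2=0$ and $U\otimes k=U$, yields $U\cap(U\otimes T(V)_{>0})=0$.

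The hard part will be $(ii)\Rightarrow(iii)$, which I would prove by induction on $N=\max_i(\text{tensor degree of }w_i)$ in a relation $\sum_i u_i\otimes w_i=0$ with $\{u_i\}$ linearly independent. Two ingredients drive it. First, splitting $w_i=\lambda_i+w_i'$ with $\lambda_i\in k$ and $w_i'\in T(V)_{>0}$, the relation rearranges to $\sum_i\lambda_i u_i=-\sum_i u_i\otimes w_i'$, whose left side lies in $U$ and whose right side lies in $U\otimes T(V)_{>0}$; hypothesis $(ii)$ makes both sides vanish, so linear independence forces all $\lambda_i=0$ and reduces the relation to $\sum_i u_i\otimes w_i'=0$ with $w_i'\in T(V)_{>0}$. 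Second, fixing a basis $\{v_\alpha\}$ of $V$, a nonempty word has a unique last letter, so $T(V)_{>0}=\bigoplus_\alpha T(V)\otimes v_\alpha$, i.e. each $w_i'$ is uniquely $\sum_\alpha s_{i\alpha}\otimes v_\alpha$ with $s_{i\alpha}\in T(V)$. Collecting last letters in $0=\sum_i u_i\otimes w_i'=\sum_\alpha\big(\sum_i u_i\otimes s_{i\alpha}\big)\otimes v_\alpha$ and using the directness of this decomposition gives $\sum_i u_i\otimes s_{i\alpha}=0$ for every $\alpha$. Since each $s_{i\alpha}$ has tensor degree at most $N-1$, the inductive hypothesis yields $s_{i\alpha}=0$, hence $w_i'=0$ and $w_i=0$; the base case $N=0$ is just the linear independence of $\{u_i\}$.

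The only subtle points I would need to check carefully are the exactness of the last-letter decomposition of $T(V)_{>0}$ (equivalently, injectivity of $T(V)\otimes_k V\to T(V)_{>0}$, which holds on a monomial basis) and the bookkeeping that stripping one letter lowers the tensor degree by exactly one; both are elementary, and notably the whole argument uses neither local finiteness nor any grading assumption on $U$, matching the generality of the statement.
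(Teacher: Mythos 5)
Your proof is correct, and two of its three implications coincide with the paper's: $(iii)\Rightarrow(i)$ is proved exactly as in the paper (expand an element of $(U\otimes W_1)\cap(U\otimes W_2)$ twice through a basis of $U$, subtract, and apply $(iii)$), and $(i)\Rightarrow(ii)$ is the same specialization $W_1=k$, $W_2=T(V)_{>0}$. The genuine difference is in $(ii)\Rightarrow(iii)$. The paper argues by contradiction: it expands the $w_i$ in a monomial basis, picks a word $\bar{s}_0$ of minimal length $n_0$ carrying a nonzero coefficient, strips that entire word as a suffix from the relation (legitimate precisely because, by minimality, all surviving words have length at least $n_0$), and then applies $(ii)$ to force the scalar component $\sum_i c_{i,n_0,\bar{s}_0}\,u_i$ to vanish, contradicting linear independence. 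You instead induct on the maximal tensor degree $N$: you first kill the scalar parts $\lambda_i$ by the same use of $(ii)$ (the relation equates an element of $U$ with one of $U\otimes T(V)_{>0}$, so both sides lie in the trivial intersection), and then strip a single last letter via the internal direct sum $T(V)_{>0}=\bigoplus_\alpha T(V)\otimes v_\alpha$ before invoking the inductive hypothesis. The two proofs share the key mechanism---remove a suffix, then use $(ii)$ to annihilate the degree-zero component---but your organization replaces the paper's minimal-counterexample argument and whole-word suffix extraction by a well-founded induction with one-letter stripping. What this buys is transparency: the only combinatorial input in your version is the evident injectivity of $T(V)\otimes_k V\to T(V)_{>0}$ on monomials, whereas the paper's step ``by the definition of the tensor algebra we also have that the sum \dots\ vanishes'' quietly depends on the minimality of $n_0$ for the suffix-extraction map to interact correctly with the factors $u_i\otimes v_{s_1}\dots v_{s_n}$. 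Both arguments are equally elementary and, as you observe, use no grading or local-finiteness hypotheses, matching the generality of the statement. (Incidentally, you correctly read the definition of left tensor-intersection faithfulness as $(U\otimes W_1)\cap(U\otimes W_2)=U\otimes(W_1\cap W_2)$; the ``$(U\cap W_1)$'' in the paper's definition is a typo, as its own proof of $(iii)\Rightarrow(i)$ confirms.)
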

%%%%%%%
\begin{proof}
The implication $(i) \Rightarrow (ii)$ is clear. 
Indeed, it easily follows by considering $W_{1} = k$ and $W_{2} = T(V)_{> 0}$. 

We will now prove the implication $(ii) \Rightarrow (iii)$. 
We shall only consider the case corresponding to the assumption $U \cap (U \otimes T(V)_{>0}) = 0$, because the other case is analogous. 
Let $\{ v_{s} \}_{s \in S}$ be a basis of $V$. 
Given $n \in \NN_{0}$, we will denote by $\bar{s} = (s_{1}, \dots, s_{n})$ a typical element of $S^{n}$. 
Let us write 
\[     w_{i} = \sum_{n \in \NN_{0}} \sum_{\bar{s} \in S^{n}} c_{i,n,\bar{s}} \, v_{s_{1}} \dots v_{s_{n}},     \]
where $c_{i,n,\bar{s}} \in k$, such that the sum is of finite support, \textit{i.e.} fixed $i \in I$, the coefficients $c_{i,n,\bar{s}}$ are almost all zero. 
We have to prove that they are in fact all zero. 
Let us suppose that this is not the case, and consider $n_{0}$ the first nonnegative integer satisfying that there exists $i_{0} \in I$ and $\bar{s}_{0} \in S^{n_{0}}$ such that 
$c_{i_{0},n_{0},\bar{s}_{0}} \neq 0$. 
We will write $\bar{s}_{0} = (s_{0,1}, \dots, s_{0,n_{0}})$. 
On the one hand, we write 
\begin{align*}
     \sum_{i \in I} u_{i} \otimes w_{i} &= \sum_{i \in I} \sum_{n \in \NN_{0}} \sum_{\bar{s} \in S^{n}} c_{i,n,\bar{s}} \, u_{i} \otimes v_{s_{1}} \dots v_{s_{n}} 
     \\
     &= \sum_{n \in \NN_{0}} \sum_{\bar{s} \in S^{n}} \sum_{i \in I} c_{i,n,\bar{s}} \, u_{i} \otimes v_{s_{1}} \dots v_{s_{n}} 
     \\
     &= \sum_{n \geq n_{0}} \sum_{\bar{s} \in S^{n}} \sum_{i \in I} c_{i,n,\bar{s}} \, u_{i} \otimes v_{s_{1}} \dots v_{s_{n}}.     
\end{align*}
Since this sum vanishes, by the definition of the tensor algebra we also have that the sum 
\[    \sum_{n \in \NN_{0}} \sum_{\bar{s} \in S^{n}} \sum_{i \in I} c_{i,n+n_{0},(\bar{s},\bar{s}_{0})} \, u_{i} \otimes v_{s_{1}} \dots v_{s_{n}} v_{s_{0,n_{0}}} \dots v_{s_{0,n_{0}}}     \]
vanishes, where $(\bar{s},\bar{s}_{0})$ denotes the $(n+n_{0})$-tuple $(s_{1},\dots,s_{n},s_{0,1}, \dots, s_{0,n_{0}})$. 
Thus, 
\[     \sum_{n \in \NN_{0}} \sum_{\bar{s} \in S^{n}} \sum_{i \in I} c_{i,n+n_{0},(\bar{s},\bar{s}_{0})} \, u_{i} \otimes v_{s_{1}} \dots v_{s_{n}} = 0.     \]
Define 
\[     \omega_{>0} = \sum_{n \in \NN} \sum_{\bar{s} \in S^{n}} \sum_{i \in I} c_{i,n+n_{0},(\bar{s},\bar{s}_{0})} \, u_{i} \otimes v_{s_{1}} \dots v_{s_{n}},     \]
so
\[     \sum_{n \in \NN_{0}} \sum_{\bar{s} \in S^{n}} \sum_{i \in I} c_{i,n+n_{0},(\bar{s},\bar{s}_{0})} \, u_{i} \otimes v_{s_{1}} \dots v_{s_{n}} 
                                                                                                            = \big(\sum_{i \in I} c_{i,n_{0},\bar{s}_{0}} \, u_{i}\big) + \omega_{>0} = 0.     \]
If $\omega_{>0}$ vanishes, then the sum $\sum_{i \in I} c_{i,n_{0},\bar{s}_{0}} \, u_{i} \in U$ should also vanish. 
On the other hand, if $\omega_{>0}$ does not vanish, the sum $\sum_{i \in I} c_{i,n_{0},\bar{s}_{0}} \, u_{i} \in U$ should be in $U \otimes T(V)_{>0}$, 
so, by the assumption on $U$, this sum should again be trivial. 
In any case we have that $\sum_{i \in I} c_{i,n_{0},\bar{s}_{0}} \, u_{i} \in U$ vanishes. 
Now, since the $\{ u_{i} \}_{i \in I}$ form a linearly independent set, we get that $c_{i,n_{0},\bar{s}_{0}} = 0$, for all $i \in I$. 
This is a contradiction, by the assumption that $c_{i_{0},n_{0},\bar{s}_{0}} \neq 0$. 
Hence, all the coefficients $c_{i,n,\bar{s}}$ vanish, which proves that the corresponding statement of item $(iii)$ holds. 

Let us now prove the implication $(iii) \Rightarrow (i)$. 
As before, we shall only prove the implication for the left side conditions, for the other case is analogous.  
We assume thus that $U$ satisfies that, given $I$ a finite set of indices, a linearly independent family $\{ u_{i} : i \in I \} \subseteq U$, and arbitrary elements $w_{i} \in T(V)$, for $i \in I$, 
if $\sum_{i \in I} u_{i} \otimes w_{i}$ vanishes, then $w_{i} = 0$, for all $i \in I$. 
We will prove that in this case $(U \otimes W) \cap (U \otimes W') = U \otimes (W \cap W')$. 
Note that $(U \otimes W) \cap (U \otimes W') \supseteq U \otimes (W \cap W')$ is always true by obvious reasons. 
We shall prove the other contention. 
Consider a nontrivial element $\omega \in (U \otimes W) \cap (U \otimes W')$, and fix a basis $\{ u_{s} \}_{s \in S}$ of $U$. 
Then, there exists two finite subsets $S_{1}, S_{2} \subseteq S$, and elements $w_{s} \in W$ for $s \in S_{1}$ and $w'_{s} \in W'$ for $s \in S_{2}$, such that 
$\omega = \sum_{s \in S_{1}} u_{s} \otimes w_{s} = \sum_{s \in S_{2}} u_{s} \otimes w'_{s}$. 
Setting $w_{s} = 0$ if $s \in S \setminus S_{1}$, and $w'_{s} = 0$ if $s \in S \setminus S_{2}$, we may write 
$\omega = \sum_{s \in S} u_{s} \otimes w_{s} = \sum_{s \in S} u_{s} \otimes w'_{s}$, so $\sum_{s \in S} u_{s} \otimes (w_{s} - w'_{s}) = 0$. 
The assumption implies that $w_{s} = w'_{s}$, for all $s \in S$, so $w_{s} \in W \cap W'$, for all $s \in S$. 
Hence, $\omega \in U \otimes (W \cap W')$, which proves the assertion. 
The proposition is thus proved.  
\end{proof}

%%%%%%%
\begin{example}
\label{example:tif}
The previous proposition immediately implies that $V$ is a tensor-intersection faithful vector subspace of the tensor algebra $T(V)$. 
Furthermore, it also tells us that a graded vector subspace $R \subseteq T(V)$ of the tensor algebra on a positively graded vector space $V$ satisfying condition \eqref{eq:sprel} (\textit{e.g.} a space of relations $R$ of an algebra $T(V)/I$) is tensor-intersection faithful. 
\end{example}
%%%%%%%

%%%%%%%
\begin{remark}
\label{remark:tensbuenomalo}
Note that if $U \subseteq T(V)$ satisfies that $U \cap (U \otimes T(V)_{>0}) = 0$ (resp., $U \cap (T(V)_{>0} \otimes U) = 0$), then any vector subspace $U'$ of $U$ 
trivially satisfies the same condition.
The proposition tells us thus that any vector subspace of a left (resp., right) tensor-intersection faithful space is also left (resp., right) tensor-intersection faithful. 

Moreover, it is clear that the tensor product $U_{1} \otimes U_{2}$ of the two left (resp., right) tensor-intersection faithful vector subspaces $U_{i} \subseteq T(V)$, for $i=1,2$, satisfies the same condition.
\end{remark}
%%%%%%%

In exactly the same manner as in the proof of the implication $(iii) \Rightarrow (i)$ of Proposition \ref{proposition:imp} we may argue to prove the following result. 
%%%%%%%
\begin{corollary}
\label{corollary:interincl2}
Let $V$ be a vector space, and $T(V)$ the tensor algebra. 
Let us consider two vector subspaces $U' \subseteq U \subseteq T(V)$ such that  $U$ is left (resp., right) tensor-intersection faithful.
Then, given any two vector subspaces $W' \subseteq W \subseteq T(V)$, we have that 
\[     (U' \otimes W) \cap (U \otimes W') = U' \otimes W' \hskip 0.1cm \text{(resp.,  $(W \otimes U') \cap (W' \otimes U) = W' \otimes U'$)},     \]
as vector subspaces of $T(V)$. 
\end{corollary}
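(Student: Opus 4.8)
The plan is to mimic the proof of the implication $(iii) \Rightarrow (i)$ of Proposition \ref{proposition:imp}, the only new ingredient being the passage from a single subspace to the nested pair $U' \subseteq U$. I will treat only the left-hand identity, since the right-hand one follows by the symmetric argument. The inclusion $U' \otimes W' \subseteq (U' \otimes W) \cap (U \otimes W')$ is immediate from $U' \subseteq U$ and $W' \subseteq W$, so the whole content lies in the reverse inclusion.

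To prove it, first I would fix a basis $\{u_{s}\}_{s \in S'}$ of $U'$ and extend it to a basis $\{u_{s}\}_{s \in S}$ of $U$, with $S' \subseteq S$. Since $U$ is left tensor-intersection faithful, Proposition \ref{proposition:imp} guarantees that it satisfies condition $(iii)$: any finite linearly independent family drawn from $U$, in particular any finite subfamily of $\{u_{s}\}_{s \in S}$, has the property that a vanishing right-sided combination forces the right-hand factors to vanish.

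Next I would take an arbitrary $\omega \in (U' \otimes W) \cap (U \otimes W')$. Writing $\omega$ as an element of $U' \otimes W$ and expanding each left factor in the basis $\{u_{s}\}_{s \in S'}$, I obtain $\omega = \sum_{s \in S'} u_{s} \otimes w_{s}$ with $w_{s} \in W$ and almost all of them zero; writing $\omega$ as an element of $U \otimes W'$ and expanding in the basis $\{u_{s}\}_{s \in S}$, I obtain $\omega = \sum_{s \in S} u_{s} \otimes w'_{s}$ with $w'_{s} \in W'$. Setting $w_{s} = 0$ for $s \in S \setminus S'$, both expressions range over the same index set $S$, so $\sum_{s \in S} u_{s} \otimes (w_{s} - w'_{s}) = 0$. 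Condition $(iii)$ then yields $w_{s} = w'_{s}$ for every $s \in S$.

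Finally I would read off the consequences. For $s \in S \setminus S'$ we have $w_{s} = 0$, whence $w'_{s} = 0$; for $s \in S'$ the common value $w_{s} = w'_{s}$ lies in $W \cap W' = W'$, using $W' \subseteq W$. Therefore $\omega = \sum_{s \in S'} u_{s} \otimes w_{s}$ with all $w_{s} \in W'$, that is, $\omega \in U' \otimes W'$, which is the desired inclusion. The right-hand identity is proved identically, replacing right-sided combinations by left-sided ones and invoking the corresponding half of condition $(iii)$. I expect no genuine obstacle here, as this is a direct adaptation of the earlier argument; the only point requiring a little care is the bookkeeping of supports, namely ensuring that the two expansions of $\omega$ genuinely run over the common index set $S$ so that condition $(iii)$ applies.
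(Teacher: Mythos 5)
Your proposal is correct and takes essentially the same route as the paper's proof: both extend a basis of $U'$ to a basis of $U$, pad the supports so the two expansions of $\omega$ run over the common index set $S$, and invoke condition (iii) of Proposition \ref{proposition:imp} (applied to the linearly independent family $\{u_{s}\}_{s \in S} \subseteq U$) to equate the right-hand factors and conclude $\omega \in U' \otimes W'$. The only differences are cosmetic, namely the roles of your $w_{s}$ and $w'_{s}$ are swapped relative to the paper's notation.
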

%%%%%%%
\begin{proof}
We shall only prove the implication for the left side conditions, for the other case is analogous.  
Since $U$ is left tensor-intersection faithful, the previous proposition tells us that, given $I$ a finite set of indices, a linearly independent family $\{ u_{i} : i \in I \} \subseteq U$, and arbitrary elements $w_{i} \in T(V)$, for $i \in I$, if $\sum_{i \in I} u_{i} \otimes w_{i}$ vanishes, then $w_{i} = 0$, for all $i \in I$. 
We recall that, by the previous Remark, $U'$ satisfies the same property. 
We will prove that in this case $(U' \otimes W) \cap (U \otimes W') = U' \otimes W'$. 

Note that $(U' \otimes W) \cap (U \otimes W') \supseteq U' \otimes W'$ is always true by obvious reasons. 
We shall prove the other contention. 
Consider a nontrivial element $\omega \in (U' \otimes W) \cap (U \otimes W')$, and fix a basis $\{ u_{s} \}_{s \in S}$ of $U$. 
We furthermore assume that there exists a subset $S' \subseteq S$ such that $\{ u_{s'} \}_{s' \in S'}$ is a basis of $U'$. 
Then, there exists two families of almost all zero elements $w_{s} \in W'$ for $s \in S$ and $w'_{s'} \in W$ for $s' \in S'$, such that 
$\omega = \sum_{s \in S} u_{s} \otimes w_{s} = \sum_{s' \in S'} u_{s'} \otimes w'_{s'}$. 
Setting $w'_{s} = 0$ if $s \in S \setminus S'$, we may write 
$\omega = \sum_{s \in S} u_{s} \otimes w_{s} = \sum_{s \in S} u_{s} \otimes w'_{s}$, so $\sum_{s \in S} u_{s} \otimes (w_{s} - w'_{s}) = 0$. 
The assumption now implies that $w_{s} = w'_{s}$, for all $s \in S$. 
Hence, $\omega \in U' \otimes W'$, which proves the assertion. 
\end{proof}

%%%%%%%
\begin{remark}
We strongly remark that in the statements of the previous proposition and corollary, all the intersections of spaces considered are taken by regarding all the corresponding spaces 
as vector subspaces of the tensor algebra $T(V)$. 
For instance, in Proposition \ref{proposition:imp}, when considering the intersection of $W$ and $W'$, and the intersection of $U \otimes W$ and $U \otimes W'$, 
we are regarding $W$, $W'$, $U \otimes W$ and $U \otimes W'$ as vector subspaces of $T(V)$.  
The same comments apply to Corollary \ref{corollary:interincl2}. 
\end{remark}
%%%%%%%

We have the following interesting corollaries of the previous proposition, the first one of these being a direct consequence of the implication $(ii) \Rightarrow (iii)$. 
%%%%%%%
\begin{corollary}
\label{corollary:interind0}
Let $V$ be a vector space, and $T(V)$ the tensor algebra. 
Let us consider a left (resp., right) tensor-intersection faithful vector subspaces $U \subseteq T(V)$.
If $\{ U_{i} \}_{i \in I}$ is an arbitrary family of independent vector subspaces of the $U$, then 
the family of vector subspaces of the tensor algebra given by $\{ U_{i} \otimes T(V) \}_{i \in I}$ (resp., $\{ T(V) \otimes U_{i} \}_{i \in I}$) is also independent. 
\end{corollary}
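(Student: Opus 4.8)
The plan is to reduce the claim directly to the criterion of Proposition \ref{proposition:imp}, specifically to condition $(iii)$, which is available since $U$ is left tensor-intersection faithful. I would only treat the left-side statement, the right-side one being entirely analogous. Recall that a family $\{ M_{i} \}_{i \in I}$ of vector subspaces of a vector space is \emph{independent} precisely when, for every choice of almost-all-zero elements $m_{i} \in M_{i}$, the vanishing of $\sum_{i \in I} m_{i}$ forces $m_{i} = 0$ for all $i \in I$. So I must show: if $\{ \omega_{i} \}_{i \in I}$ is an almost-all-zero family with $\omega_{i} \in U_{i} \otimes T(V)$ and $\sum_{i \in I} \omega_{i} = 0$, then each $\omega_{i}$ vanishes.

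First I would exploit the hypothesis that $\{ U_{i} \}_{i \in I}$ is independent to build a convenient basis of $U$. Since the sum $\sum_{i \in I} U_{i}$ is direct inside $U$, I can choose a basis $\{ u_{s} \}_{s \in S_{i}}$ of each $U_{i}$ and form $S = \bigsqcup_{i \in I} S_{i}$; the resulting family $\{ u_{s} \}_{s \in S}$ is then a basis of $\bigoplus_{i \in I} U_{i}$ and, in particular, a linearly independent family inside $U$. This is the only place where the independence of $\{ U_{i} \}_{i \in I}$ enters, and it is what makes condition $(iii)$ applicable to all the $u_{s}$ simultaneously across the different indices.

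Next I would express each $\omega_{i} \in U_{i} \otimes T(V)$ as $\omega_{i} = \sum_{s \in S_{i}} u_{s} \otimes w_{i,s}$ with $w_{i,s} \in T(V)$ almost all zero, and set $w_{s} = w_{i,s}$ for $s \in S_{i}$, which is unambiguous because the $S_{i}$ are pairwise disjoint. Collecting terms gives $\sum_{i \in I} \omega_{i} = \sum_{s \in S} u_{s} \otimes w_{s}$, so the assumption $\sum_{i \in I} \omega_{i} = 0$ reads $\sum_{s \in S} u_{s} \otimes w_{s} = 0$. Since $\{ u_{s} \}_{s \in S}$ is a linearly independent family inside the left tensor-intersection faithful space $U$, the implication $(ii) \Rightarrow (iii)$ of Proposition \ref{proposition:imp} yields $w_{s} = 0$ for every $s \in S$, hence $w_{i,s} = 0$ and thus $\omega_{i} = 0$ for all $i \in I$. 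This establishes the independence of $\{ U_{i} \otimes T(V) \}_{i \in I}$.

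I do not anticipate a genuine obstacle here: the substantive content is entirely contained in Proposition \ref{proposition:imp}, and the only point requiring care is the bookkeeping of choosing a basis of $U$ adapted to the direct-sum decomposition $\bigoplus_{i \in I} U_{i}$, so that a single linearly independent family $\{ u_{s} \}_{s \in S}$ simultaneously witnesses all the $\omega_{i}$. A minor technical remark worth recording is that the relation $\sum_{i \in I} \omega_{i} = 0$ involves only finitely many nonzero terms, so all sums over $S$ that appear are finitely supported, and condition $(iii)$, stated for a finite set of indices, applies without modification.
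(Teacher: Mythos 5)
Your proof is correct and is precisely the argument the paper has in mind: the paper dispatches this corollary with the single remark that it is ``a direct consequence of the implication $(ii) \Rightarrow (iii)$'' of Proposition \ref{proposition:imp}, and your write-up simply makes that reduction explicit, with the right bookkeeping (the adapted basis $\bigsqcup_{i \in I} S_{i}$ of $\bigoplus_{i \in I} U_{i}$ and the finiteness remark needed to invoke condition $(iii)$). No gaps; nothing further is required.
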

%%%%%%%

%%%%%%%
\begin{corollary}
\label{corollary:interind}
Let $V$ be a vector space, and $T(V)$ the tensor algebra. 
Let us consider a left (resp., right) tensor-intersection faithful vector subspaces $U \subseteq T(V)$.
If $\{ W_{i} \}_{i \in I}$ is an arbitrary family of independent vector subspaces of the $T(V)$, then 
the family of vector subspaces of the tensor algebra given by $\{ U \otimes W_{i} \}_{i \in I}$ (resp., $\{ W_{i} \otimes U \}_{i \in I}$) is also independent. 
\end{corollary}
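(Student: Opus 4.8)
The plan is to run essentially the same argument used in the proof of the implication $(iii) \Rightarrow (i)$ of Proposition \ref{proposition:imp} and in Corollary \ref{corollary:interincl2}, relying on the characterization of tensor-intersection faithfulness given by item $(iii)$ of that proposition. I shall only treat the left-side statement, the right-side one being completely analogous. First I would note that, since independence of a family of subspaces is a property that can be tested on each finite subfamily, there is no loss of generality in assuming that $I$ is finite.

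Next I would fix a basis $\{ u_{s} \}_{s \in S}$ of $U$. The key observation is that, because $U \otimes W_{i}$ denotes the image of the multiplication map $\mu$ of $T(V)$, every element $\omega_{i} \in U \otimes W_{i}$ can be written as a finite sum $\omega_{i} = \sum_{s \in S} u_{s} \otimes w_{s}^{(i)}$ with $w_{s}^{(i)} \in W_{i}$ almost all zero; indeed, expanding the left factor of each generating product $u \otimes w$ in the basis $\{ u_{s} \}$ and absorbing the scalars into the right factor yields such an expression. To prove independence, I would suppose that $\sum_{i \in I} \omega_{i} = 0$ with $\omega_{i} \in U \otimes W_{i}$, and I must show that each $\omega_{i}$ vanishes.

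Substituting the expressions above and regrouping, the vanishing sum becomes $\sum_{s \in S} u_{s} \otimes \big(\sum_{i \in I} w_{s}^{(i)}\big) = 0$, where only finitely many indices $s$ contribute. Applying item $(iii)$ of Proposition \ref{proposition:imp} to the linearly independent family $\{ u_{s} \}$ (restricted to the finitely many $s$ that appear), I would conclude that $\sum_{i \in I} w_{s}^{(i)} = 0$ for every $s \in S$. Since the family $\{ W_{i} \}_{i \in I}$ is independent and $w_{s}^{(i)} \in W_{i}$, each term $w_{s}^{(i)}$ must vanish, whence $\omega_{i} = \sum_{s \in S} u_{s} \otimes w_{s}^{(i)} = 0$ for all $i \in I$, as desired.

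I do not expect a genuine obstacle here: the proof is a direct transcription of the faithfulness characterization combined with the definition of independence of subspaces. The only point requiring mild care is the bookkeeping in the first step, namely writing an arbitrary element of the \emph{product} space $U \otimes W_{i} \subseteq T(V)$ in terms of the fixed basis of $U$, keeping in mind that here $\otimes$ denotes the product of $T(V)$ rather than an abstract tensor product, so that the coefficients may be legitimately absorbed into the factors lying in $W_{i}$.
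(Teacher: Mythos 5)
Your proof is correct and follows essentially the same route as the paper's: fix a basis of $U$, expand and regroup the vanishing sum as $\sum_{s} u_{s} \otimes \bigl(\sum_{i} w_{s}^{(i)}\bigr) = 0$, apply characterization $(iii)$ of Proposition \ref{proposition:imp}, and then invoke the independence of $\{W_{i}\}_{i \in I}$. If anything, your bookkeeping is slightly more careful, since you treat a general element of the product space $U \otimes W_{i} \subseteq T(V)$ directly, whereas the paper's proof writes the elements as single products $u_{i} \otimes w_{i}$ and concludes via the vanishing of the coefficients $c_{s i}$.
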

%%%%%%%
\begin{proof}
We shall prove the statement for the left tensor-intersection faithfulness assumption on $U$, for the right case is completely parallel. 
Let $w_{i} \in W_{i}$ and $u_{i} \in U$ be collections of almost all zero elements such that $\sum_{i \in I} u_{i} \otimes w_{i}$ vanishes. 
We have to prove that each term of the sum vanishes. 
Without loss of generality we may assume that $I$ is in fact finite, and that $w_{i}$ is nonvanishing, for all $i \in I$. 
Let $\{ \bar{u}_{s} \}_{s \in S}$ be a basis of $U$, and write $u_{i} = \sum_{s \in S} c_{s i} \bar{u}_{s}$, where $c_{s i} \in k$. 
We then have that 
\[     \sum_{i \in I} u_{i} \otimes w_{i} = \sum_{i \in I} \sum_{s \in S} c_{s i} \bar{u}_{s} \otimes w_{i} = \sum_{s \in S} \bar{u}_{s} \otimes \big(\sum_{i \in I} c_{s i}  w_{i}\big).     \]
Since $U$ is left tensor-intersection faithful, we have that $\sum_{i \in I} c_{s i}  w_{i}$ vanishes for all $s \in S$. 
Taking into account that the family $\{ W_{i} \}_{i \in I}$ is independent we get that $c_{s i}  w_{i} = 0$, for all $s \in S$ and $i \in I$, 
which yields that $c_{s i} = 0$, for all $s \in S$ and $i \in I$, for the elements $w_{i}$ are nonzero for all $i \in I$. 
So, $u_{i} = 0$, for all $i \in I$, which in turn implies that $u_{i} \otimes w_{i}$ vanishes for all $i \in I$, which was to be shown.  
The corollary is thus proved.
\end{proof}

Finally, we state the last consequence of Proposition \ref{proposition:imp}.
%%%%%%%
\begin{corollary}
\label{corollary:maps}
Let $V$ be a vector space, $T(V)$ the tensor algebra, $U \subseteq T(V)$ a left (resp., right) tensor-intersection faithful vector subspace, and $W \subseteq T(V)$. 
Given arbitrary vector spaces $U'$ and $W'$, and linear maps $f : U \rightarrow U'$ and $g : W \rightarrow W'$, then the linear map $f \otimes g : U \otimes W \rightarrow U' \otimes W'$ 
(resp., $g \otimes f : W \otimes U \rightarrow W' \otimes U'$) given by the usual expression $(f \otimes g)(\sum_{i} u_{i} \otimes w_{i}) = \sum_{i} f(u_{i}) \otimes g(w_{i})$ 
(resp., $(g \otimes f)(\sum_{i} w_{i} \otimes u_{i}) = \sum_{i} g(w_{i}) \otimes f(u_{i})$), where $u_{i} \in U$, $w_{i} \in W$, and $I$ is a finite set of indices, 
is well-defined, where as usual $U \otimes W$ (resp., $W \otimes U$) denotes the product of $U$ and $W$ inside $T(V)$, 
whereas $U' \otimes W'$ (resp., $W' \otimes U'$) denotes the usual tensor product. 
\end{corollary}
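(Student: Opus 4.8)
The plan is to show that the prescribed value of $f \otimes g$ does not depend on the way an element of $U \otimes W \subseteq T(V)$ is written as a sum $\sum_{i} u_{i} \otimes w_{i}$ with $u_{i} \in U$ and $w_{i} \in W$; the nonobvious point, as explicitly warned in the discussion preceding the statement, is precisely that such a representation need not be unique inside $T(V)$. I shall only treat the case where $U$ is left tensor-intersection faithful, the right case being entirely analogous. By linearity it suffices to prove that whenever $\sum_{i \in I} u_{i} \otimes w_{i} = 0$ in $T(V)$, for a finite index set $I$ and elements $u_{i} \in U$, $w_{i} \in W$, then $\sum_{i \in I} f(u_{i}) \otimes g(w_{i}) = 0$ in the usual tensor product $U' \otimes W'$.

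First I would pass to a linearly independent family, exactly as in the proof of Corollary \ref{corollary:interind}. Choosing a basis $\{ \bar{u}_{s} \}_{s \in S}$ of the (finite-dimensional) subspace of $U$ spanned by $\{ u_{i} : i \in I \}$, I write $u_{i} = \sum_{s \in S} c_{s i} \bar{u}_{s}$ with $c_{s i} \in k$, so that
\[
     \sum_{i \in I} u_{i} \otimes w_{i} = \sum_{s \in S} \bar{u}_{s} \otimes \Big( \sum_{i \in I} c_{s i} w_{i} \Big) = 0
\]
inside $T(V)$. Since $\{ \bar{u}_{s} \}_{s \in S}$ is a linearly independent subset of $U$ and $U$ is left tensor-intersection faithful, the characterization given in item $(iii)$ of Proposition \ref{proposition:imp} yields that $\sum_{i \in I} c_{s i} w_{i} = 0$ in $T(V)$, for every $s \in S$.

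The crucial observation is that this last relation is a genuine linear relation among the elements $w_{i}$ of the vector space $W$, so applying the linear map $g$ gives $\sum_{i \in I} c_{s i} g(w_{i}) = 0$ in $W'$, for all $s \in S$. Using the linearity of $f$ I would then compute, inside the usual tensor product $U' \otimes W'$,
\[
     \sum_{i \in I} f(u_{i}) \otimes g(w_{i}) = \sum_{s \in S} f(\bar{u}_{s}) \otimes \Big( \sum_{i \in I} c_{s i} g(w_{i}) \Big) = 0,
\]
which is exactly what was required. The main (and essentially only) obstacle is the potential ambiguity of the representation of an element of $U \otimes W$ inside $T(V)$; everything hinges on converting a vanishing relation in $T(V)$ into the scalar relations $\sum_{i \in I} c_{s i} w_{i} = 0$, and this is precisely what tensor-intersection faithfulness delivers through Proposition \ref{proposition:imp}. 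I expect no further difficulty, since the map $g$ is applied only to honest linear combinations of the $w_{i}$ regarded as vectors of $W$, and the linearity of $f$ and $g$ closes the argument.
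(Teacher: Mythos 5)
Your proof is correct and takes essentially the same route as the paper's: both expand the elements $u_{i}$ in a basis and invoke item $(iii)$ of Proposition \ref{proposition:imp} to convert the vanishing of the sum in $T(V)$ into genuine linear relations among the $w_{i}$, to which $g$ and the linearity of $f$ are then applied. The only cosmetic difference is that you reduce well-definedness to showing the formula annihilates zero representations (using a basis of the finite-dimensional span of the $u_{i}$), whereas the paper directly compares two representations of the same element over a fixed basis of all of $U$; these are equivalent reformulations of the same argument.
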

%%%%%%%
\begin{proof}
We shall prove the corollary under the left tensor-intersection faithfulness assumption on $U$, because the right case is analogous. 
Let $\omega \in U \otimes W$ be given in two different manners by the sum $\sum_{i \in I} u'_{i} \otimes w'_{i}$, where $u'_{i} \in U$ and $w'_{i} \in W$, and $I$ is a finite set of indices, 
and by the sum $\sum_{j \in J} u''_{j} \otimes w''_{j}$, where $u''_{j} \in U$ and $w''_{j} \in W$, and $J$ is another finite set of indices. 
We have to prove that $\sum_{i \in I} f(u'_{i}) \otimes g(w'_{i}) = \sum_{j \in J} f(u''_{j}) \otimes g(w''_{j})$, as elements of the usual tensor product $U' \otimes W'$. 
Fix a basis $\{ u_{s} \}_{s \in S}$ of $U$, and write $u'_{i} = \sum_{s \in S} c'_{s i} u_{s}$ and $u''_{j} = \sum_{s \in S} c''_{s j} u_{s}$, for all $i \in I$ and $j \in J$, respectively, 
where $c'_{s i}, c''_{s j} \in k$. 
Then, 
\[     \omega = \sum_{i \in I} u'_{i} \otimes w'_{i} = \sum_{s \in S} u_{s} \otimes \Big(\sum_{i \in I} c'_{s i} w'_{i}\Big),     \]
and 
\[     \omega = \sum_{j \in J} u''_{j} \otimes w''_{j} = \sum_{s \in S} u_{s} \otimes \Big(\sum_{j \in J} c''_{s j} w''_{j}\Big).     \]
Since $U$ is a left tensor-intersection faithful vector subspace of the tensor algebra, we have that 
\[     \sum_{i \in I} c'_{s i} w'_{i} = \sum_{j \in J} c''_{s j} w''_{j},     \]
for all $s \in S$. 
Let us denote this element $w_{s}$. 

In particular, we have that 
\begin{align*}
   \sum_{i \in I} f(u'_{i}) \otimes g(w'_{i}) &= \sum_{i \in I} \sum_{s \in S} c'_{s i} f(u_{s}) \otimes g(w'_{i}) = \sum_{s \in S} f(u_{s}) \otimes \Big(\sum_{i \in I} c'_{s i} g(w'_{i})\Big) 
   \\
        &= \sum_{s \in S} f(u_{s}) \otimes g(w_{s}) = \sum_{s \in S} f(u_{s}) \otimes \Big(\sum_{j \in J} c''_{s j} g(w''_{j})\Big) 
   \\
    &= \sum_{j \in J} \sum_{s \in S} c''_{s j} f(u_{s}) \otimes g(w''_{j}) = \sum_{j \in J} f(u''_{j}) \otimes g(w''_{j}),
\end{align*}
which was to be shown. 
\end{proof}

%%%%%%%%%%%%%%%%%%%%%%%%%%%%%%%%%%%%%%%%%%%%%%%%%%%%%%%%%%%%%%%%%%%%%%%%%%%%%%%%%%%%%%%%%%%%%%%%%%%%%%%%%%%%%%%%%%%%%%%%%%%%%%%%%%%%%%%%%%%%%%%%%%%%%%%%%%%%%%%
\subsection{\texorpdfstring{The definition of multi-Koszul algebras}{The definition of multi-Koszul algebras}}
\label{subsec:defimultikoszul}

Given $i \in \NN_{0}$ let us consider the family $\{\cap_{j=0}^{N} V^{(j)} \otimes R^{(i)} \otimes V^{(N-j)}\}_{N \in \NN_{0}}$ of graded vector subspaces of $T(V)$. 
If $i = 0$, the previous family coincides with $\{ V^{(N)} \}_{N \in \NN_{0}}$, which is trivially seen to be independent. 
Moreover, if $i \in \NN$, we claim that this family is also independent. 
Indeed, this follows from the fact that the family $\{ V^{(N)} \otimes R^{(i)} \}_{N \in \NN_{0}}$ of graded vector subspaces of $T(V)$ is independent by Corollary \ref{corollary:interind}, 
and the $N$-th member of the former family is a vector subspace of the $N$-th member of the latter family. 
Note furthermore that any family of vector subspaces of the previous ones is thus independent. 

Set the graded vector subspace of $T(V)$ given by the direct sum 
\begin{equation}
\label{eq:iTN}
     {}^{i}T = \bigoplus_{N \in \NN_{0}} \big(\bigcap_{j=0}^{N} V^{(j)} \otimes R^{(i)} \otimes V^{(N-j)}\big).     
\end{equation}
We may thus consider ${}^{i}T = \oplus_{N \in \NN_{0}} {}^{i}T^{N}$, where ${}^{i}T^{N} = \cap_{j=0}^{N} V^{(j)} \otimes R^{(i)} \otimes V^{(N-j)}$, 
provided with another grading given by the index $N$ of the previous direct sum. 
Since each ${}^{i}T^{N}$ is a graded vector space for the grading coming form the grading on $V$, ${}^{i}T$ is in fact a bigraded vector space. 
We shall refer the grading of ${}^{i}T$ coming from the grading on $V$ as \emph{usual} if more indications are necessary, 
even though we shall call it in general the grading of ${}^{i}T$, without further specifications.  
On the other hand, we will refer to the new grading given by \eqref{eq:iTN} as \emph{special}. 
Note that ${}^{0}T = T(V)$, and that the usual and special gradings coincide in this case. 

Given $N \in \NN$, we recall that a \emph{partition} of $N$ is an $N$-tuple of the form $\bar{n} = (n_{1},\dots,n_{N}) \in \NN_{0}^{N}$ such that $n_{1} + \dots + n_{N} = N$. 
The \emph{length} of the partition $\bar{n}$ is defined as the greatest positive integer $l \leq N$ such that $n_{l} \neq 0$, and it is denoted by $l(\bar{n})$. 
We will denote by $\mathrm{Par}(N)$ the set of partitions of $N$, and by $\mathrm{Par}_{l}(N)$ the set of partitions of length $l$. 
It is clear that $\mathrm{Par}(N) = \cup_{l=1}^{N} \mathrm{Par}_{l}(N)$, and the same occurs for the sets of decreasing partitions.
If $N=0$, we set $\mathrm{Par}(0) = \emptyset$. 

Finally, we will define the family $\{ J_{i} \}_{i \in \NN_{0}}$ of graded vector subspaces of $T(V)$ recursively as follows. 
We set $J_{0} = k$. 
We shall first define the spaces indexed by even integers. 
Suppose we have defined $J_{0}, J_{2}, \dots J_{2i}$, for some $i \in \NN_{0}$. 
Then, we set
\begin{equation}
\label{eq:j_impar}
     J_{2(i + 1)} = \Big(\sum_{N \in \NN} \big(\bigcap_{\begin{scriptsize}\begin{matrix}\bar{n} \in \mathrm{Par}(i) \\ \bar{m} \in \mathrm{Par}_{i+1}(N) \end{matrix}\end{scriptsize}} 
V^{(m_{1})} \otimes J_{2 n_{1}} \otimes \dots \otimes V^{(m_{i})} \otimes J_{2 n_{i}} \otimes V^{(m_{i+1})}\big)\Big) \cap R^{(i+1)}.     
\end{equation}
On the other hand, we define 
\begin{equation}
\label{eq:j_par}
     J_{2i + 1} = (V \otimes J_{2i}) \cap (J_{2i} \otimes V),     
\end{equation}

Notice that $J_{1} = V$, $J_{2} = R$ and $J_{3} = (V \otimes R) \cap (R \otimes V)$ as graded vector subspaces of $T(V)$. 
On the other hand, note that by construction we have that $J_{2 j} \subseteq R^{(j)}$, for all $j \in \NN_{0}$, and thus $J_{2 j + 1} \subseteq (V \otimes R^{(j)}) \cap (R^{(j)} \otimes V)$, 
also for $j \in \NN_{0}$. 

Furthermore, we claim that the sum appearing in the definition \eqref{eq:j_impar} of $J_{2i+2}$ is in fact direct.  
In order to prove so, note that 
\[     \bigcap_{\begin{scriptsize}\begin{matrix}\bar{n} \in \mathrm{Par}(i) \\ \bar{m} \in \mathrm{Par}_{i+1}(N) \end{matrix}\end{scriptsize}} 
V^{(m_{1})} \otimes J_{2 n_{1}} \otimes \dots \otimes V^{(m_{i})} \otimes J_{2 n_{i}} \otimes V^{(m_{i+1})}    \]
is trivially included in 
\[     \bigcap_{l=0}^{N} V^{(j)} \otimes J_{2 i} \otimes V^{(N-l)},     \]
as one sees just by considering partitions $\bar{n} \in \mathrm{Par}_{1}(i)$. 
It suffices to prove that the family given by these graded vector subspaces of $T(V)$, for $N \in \NN$, is independent. 
But, since $J_{2 i} \subseteq R^{(i)}$, we see that $\bigcap_{l=0}^{N} V^{(l)} \otimes J_{2i} \otimes V^{(N-l)} \subseteq {}^{i}T^{N}$, for all $N \in \NN$, 
we conclude that the sum appearing in \eqref{eq:j_impar} is in fact direct, so
\begin{equation}
\label{eq:j_imparbis}
     J_{2(i + 1)} = \Big(\bigoplus_{N \in \NN} \big(\bigcap_{\begin{scriptsize}\begin{matrix}\bar{n} \in \mathrm{Par}(i) \\ \bar{m} \in \mathrm{Par}_{i+1}(N) \end{matrix}\end{scriptsize}} 
V^{(m_{1})} \otimes J_{2 n_{1}} \otimes \dots \otimes V^{(m_{i})} \otimes J_{2 n_{i}} \otimes V^{(m_{i+1})}\big)\Big) \cap R^{(i+1)}.  
\end{equation} 
Note that the direct summand appearing in the sum of the first term of the intersection of the right member and corresponding to $N = 1$ is $(V \otimes J_{2 i}) \cap (J_{2 i} \otimes V)$, 
which coincides with $J_{2 i + 1}$, by \eqref{eq:j_par}.  

Moreover, we claim that 
\begin{equation}
\label{eq:j_imparbisbis}
     J_{2(i + 1)} = \Big(\bigoplus_{N \in \NN_{\geq 2}} \big(\bigcap_{\begin{scriptsize}\begin{matrix}\bar{n} \in \mathrm{Par}(i) \\ \bar{m} \in \mathrm{Par}_{i+1}(N) \end{matrix}\end{scriptsize}} 
V^{(m_{1})} \otimes J_{2 n_{1}} \otimes \dots \otimes V^{(m_{i})} \otimes J_{2 n_{i}} \otimes V^{(m_{i+1})}\big)\Big) \cap R^{(i+1)}.  
\end{equation}
The inclusion of the right member inside the left one is clear, so we only need to prove the other contention. 
Let $\omega \in J_{2(i + 1)}$. 
By \eqref{eq:j_imparbis}, we have that $\omega = \omega_{1} + \omega_{2}$, where $\omega_{1} \in J_{2i+1}$, 
and $\omega_{2}$ belongs to the first term of the intersection of the right member of \eqref{eq:j_imparbisbis}. 
We have to prove that $\omega_{1}$ vanishes. 
Since $J_{2i+1} \subseteq V \otimes R^{(i)}$, we get that $\omega_{1} \in V \otimes R^{(i)}$. 
Analogously, since 
\[     \bigcap_{\begin{scriptsize}\begin{matrix}\bar{n} \in \mathrm{Par}(i) \\ \bar{m} \in \mathrm{Par}_{i+1}(N) \end{matrix}\end{scriptsize}} 
V^{(m_{1})} \otimes J_{2 n_{1}} \otimes \dots \otimes V^{(m_{i})} \otimes J_{2 n_{i}} \otimes V^{(m_{i+1})}    \]
is trivially included in 
\[     \bigcap_{l=0}^{N} V^{(j)} \otimes J_{2 i} \otimes V^{(N-l)} \subseteq V^{(N)} \otimes R^{(i)},     \]
we have that $\omega_{2} \in T(V)^{\geq 2} \otimes R^{(i)}$. 
We also have that $\omega \in R^{(i+1)} \subseteq R \otimes R^{(i)} \subseteq T(V)^{\geq 2} \otimes R^{(i)}$. 
Hence, $\omega_{1} = \omega - \omega_{2}$ should be an element of the intersection of $V \otimes R^{(i)}$ and 
$T(V)^{\geq 2} \otimes R^{(i)}$, which is trivial by Proposition \ref{proposition:imp}, so $\omega_{1} = 0$, which was to be shown.  

The inclusions $J_{2 j} \subseteq R^{(j)}$, and $J_{2 j + 1} \subseteq (V \otimes R^{(j)}) \cap (R^{(j)} \otimes V)$, together with Example \ref{example:tif} and 
Remark \ref{remark:tensbuenomalo}, imply the following useful result. 
%%%%%%%
\begin{lemma}
\label{lemma:interj}
Let $\{ J_{i} \}_{i \in \NN_{0}}$ be the collection of graded vector subspaces of $T(V)$ defined by $J_{0} = k$ and the recursive identities \eqref{eq:j_impar} and \eqref{eq:j_par}. 
For all $i \in \NN_{0}$, $J_{i}$ is a tensor-intersection faithful vector subspace of $T(V)$.   
\end{lemma}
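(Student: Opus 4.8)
The plan is to deduce the statement from the two inclusions $J_{2j} \subseteq R^{(j)}$ and $J_{2j+1} \subseteq (V \otimes R^{(j)}) \cap (R^{(j)} \otimes V)$ recorded just before the statement, together with Example \ref{example:tif} and Remark \ref{remark:tensbuenomalo}. The key preliminary observation I would make is that $R^{(j)}$ is tensor-intersection faithful for every $j \in \NN$: since $R$ is tensor-intersection faithful by Example \ref{example:tif}, and the tensor product of two tensor-intersection faithful subspaces of $T(V)$ is again tensor-intersection faithful by the second assertion of Remark \ref{remark:tensbuenomalo}, a straightforward induction on $j$ shows that $R^{(j)} = R \otimes \dots \otimes R$ enjoys the same property.

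With this in hand the two parities are immediate. For an even index $i = 2j$ with $j \in \NN$, the inclusion $J_{2j} \subseteq R^{(j)}$ combined with the first assertion of Remark \ref{remark:tensbuenomalo}, that any vector subspace of a tensor-intersection faithful space is again tensor-intersection faithful, shows that $J_{2j}$ is tensor-intersection faithful. For an odd index $i = 2j+1$ with $j \in \NN$, I would use only the inclusion $J_{2j+1} \subseteq V \otimes R^{(j)}$; since $V$ is tensor-intersection faithful by Example \ref{example:tif} and $R^{(j)}$ is so by the preliminary observation, $V \otimes R^{(j)}$ is tensor-intersection faithful by Remark \ref{remark:tensbuenomalo}, and hence so is its subspace $J_{2j+1}$ by the same remark (the inclusion into $R^{(j)} \otimes V$ would serve just as well). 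The two degenerate indices are handled directly: $J_0 = k$ satisfies $k \cap (k \otimes T(V)_{>0}) = 0$ and its right analogue, so it is tensor-intersection faithful by Proposition \ref{proposition:imp}, while $J_1 = V$ is tensor-intersection faithful by Example \ref{example:tif}.

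I do not anticipate any real difficulty, since all of the substantive work is already contained in the previously verified inclusions and in the characterization of Proposition \ref{proposition:imp}. The only points requiring minor care are the treatment of the degenerate indices $i = 0, 1$ and the observation that, because $V$ and $R$ are faithful on both the left and the right, every intermediate space inherits both the left and the right versions of the property simultaneously, so that \emph{tensor-intersection faithful} (meaning both sides) is genuinely preserved throughout.
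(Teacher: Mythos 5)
Your proof is correct and is essentially the paper's own argument: the paper derives the lemma directly from the inclusions $J_{2j} \subseteq R^{(j)}$ and $J_{2j+1} \subseteq (V \otimes R^{(j)}) \cap (R^{(j)} \otimes V)$ together with Example \ref{example:tif} and Remark \ref{remark:tensbuenomalo}, exactly as you do. Your explicit induction establishing that $R^{(j)}$ is tensor-intersection faithful, and your separate handling of the degenerate indices $i = 0, 1$, merely spell out steps the paper leaves implicit.
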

%%%%%%%

Given $j \in \NN_{0}$, let us consider the family of graded vector subspaces $\{ \cap_{l=0}^{N} V^{(l)} \otimes J_{2 j + 1} \otimes V^{(N-l)}\}_{N \in \NN_{0}}$ of the tensor algebra $T(V)$. 
Since the $N$-th member of this family is trivially included $R^{(j)} \otimes V^{(N+1)}$, and the family $\{ R^{(j)} \otimes V^{(N+1)} \}_{N \in \NN_{0}}$ is independent by Corollary \ref{corollary:interind}, the former family is independent. 
We define thus 
\begin{equation}
\label{eq:jtildeT}
     {}^{j}\tilde{T} = \bigoplus_{N \in \NN_{0}} \big(\bigcap_{l=0}^{N} V^{(l)} \otimes J_{2 j + 1} \otimes V^{(N-l)}\big),     
\end{equation}
which is considered as a bigraded vector space, where the first grading, which will be called \emph{usual}, comes from that of $V$, and the second one, called \emph{special}, 
comes from the direct sum decomposition of the definition. 
Note that ${}^{0}\tilde{T} = S(T(V)_{>0})$ and ${}^{1}\tilde{T} = S({}^{1}T^{>0})$ as bigraded vector spaces, where $S$ is the \emph{shift} functor for the special grading, 
\textit{i.e.} $S(E)^{N} = E^{N+1}$. 
Furthermore, by \eqref{eq:j_imparbis} we have that $J_{2 j} \subseteq {}^{j-1}\tilde{T}$, for all $j \in \NN$.  

We have the following simple result. 
%%%%%%%
\begin{lemma}
\label{lemma:2recur}
Let $\{ J_{i} \}_{i \in \NN_{0}}$ be the collection of graded vector subspaces of $T(V)$ defined by $J_{0} = k$ and the recursive identities \eqref{eq:j_impar} and \eqref{eq:j_par}. 
For all $i \in \NN_{\geq 2}$, we have the inclusion 
\[     J_{i} \subseteq (R \otimes J_{i-2}) \cap (J_{i-2} \otimes R).     \]   
\end{lemma}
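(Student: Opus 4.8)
The plan is to argue by induction on $i$, separating the even and odd cases according to the two recursions \eqref{eq:j_impar}--\eqref{eq:j_par} defining the $J_{l}$. Throughout, the engine is Proposition \ref{proposition:imp}: since $R$ and every $J_{l}$ are tensor-intersection faithful (Example \ref{example:tif} and Lemma \ref{lemma:interj}), the tensor product distributes over intersections, e.g. $R \otimes (W \cap W') = (R \otimes W) \cap (R \otimes W')$ when $R$ is left tensor-intersection faithful, and symmetrically on the right. I will also use freely the inclusions $J_{2l} \subseteq R^{(l)}$ and $J_{2l+1} \subseteq (V \otimes R^{(l)}) \cap (R^{(l)} \otimes V)$ recalled just before the statement, together with the closed expression \eqref{eq:j_imparbis} for the even-indexed spaces.

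I would first dispose of the inclusion $J_{2p} \subseteq R \otimes J_{2(p-1)}$, i.e. peeling a relation off the \emph{left}. Since $J_{2p} \subseteq R^{(p)} = R \otimes R^{(p-1)}$, left tensor-intersection faithfulness of $R$ lets me write any $\omega \in J_{2p}$ uniquely as $\omega = \sum_{s} r_{s} \otimes \beta_{s}$ with $\{r_{s}\}$ a basis of $R$ and $\beta_{s} \in R^{(p-1)}$, so the goal becomes $\beta_{s} \in J_{2(p-1)}$. For each pair $\bar{n}' \in \mathrm{Par}(p-2)$ and $\bar{m}' \in \mathrm{Par}_{p-1}(N')$ I would feed into \eqref{eq:j_imparbis} for $J_{2p}$ the partition $\bar{n} = (1,\bar{n}') \in \mathrm{Par}(p-1)$ (so that the first $J$-block is $J_{2} = R$) together with $\bar{m} = (0,m'_{1},\dots,m'_{p-1})$, whose \emph{leading} $V$-block is empty; this gives $\omega \in R \otimes \big(V^{(m'_{1})} \otimes J_{2 n'_{1}} \otimes \dots \otimes J_{2 n'_{p-2}} \otimes V^{(m'_{p-1})}\big)$. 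Uniqueness of the $R$-decomposition forces each $\beta_{s}$ into the indicated space, and intersecting over all $(\bar{n}',\bar{m}')$ with $\beta_{s} \in R^{(p-1)}$ yields $\beta_{s} \in J_{2(p-1)}$. The empty leading block $m_{1} = 0$ is legitimate precisely because the length condition $\bar{m} \in \mathrm{Par}_{p}(N)$ constrains only the last nonzero index.

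For the remaining inclusions I would reduce to the one above. The odd case $J_{2p+1} \subseteq (R \otimes J_{2p-1}) \cap (J_{2p-1} \otimes R)$ follows by substituting the even case into the symmetric recursion \eqref{eq:j_par}: distributivity rewrites $R \otimes J_{2p-1}$ as $(R \otimes V \otimes J_{2(p-1)}) \cap (R \otimes J_{2(p-1)} \otimes V)$, and the factor $R \otimes J_{2(p-1)} \otimes V$ is immediate from $J_{2p+1} \subseteq J_{2p} \otimes V$ and $J_{2p} \subseteq R \otimes J_{2(p-1)}$. The genuinely delicate point, which I expect to be the main obstacle, appears in two guises with a common origin: (i) the ``crossed'' factor $R \otimes V \otimes J_{2(p-1)}$ in the odd case, and (ii) the right-peeling inclusion $J_{2p} \subseteq J_{2(p-1)} \otimes R$ in the even case. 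Both are obstructed by the left-right \emph{asymmetry} of \eqref{eq:j_imparbis}: its terms always carry a nonempty \emph{trailing} $V$-block ($m_{p} \geq 1$ is forced), so the mirror of the clean left-peeling argument is unavailable and one cannot simply set the last $V$-block to zero.

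To overcome this I would show that the construction is invariant under the order-reversing anti-automorphism of $T(V)$ that interchanges $A$ with its opposite algebra $A^{\mathrm{op}}$; for odd indices this is clear from the symmetry of \eqref{eq:j_par}, so the real content amounts to checking that adjoining the trailing-empty partition terms to \eqref{eq:j_imparbis} does not shrink the space. Granting this naturality, $J_{2p} \subseteq J_{2(p-1)} \otimes R$ becomes the already-proven left-peeling inclusion applied to $A^{\mathrm{op}}$, and the crossed factor in the odd case is handled by the mirror argument. The crux of \emph{that} reduction is the overlap-freeness \eqref{eq:sprel} of the space of relations, applied through the tensor-intersection faithfulness of the product spaces $V^{(m_{1})} \otimes J_{2 n_{1}} \otimes \dots \otimes V^{(m_{p-1})}$ (faithful by Lemma \ref{lemma:interj} and Remark \ref{remark:tensbuenomalo}): it is what forces the spurious trailing $V$-block to be absorbed consistently into the final $R$-block of $R^{(p)}$. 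Everything else is partition bookkeeping and repeated use of Proposition \ref{proposition:imp} together with Corollaries \ref{corollary:interincl2} and \ref{corollary:maps}.
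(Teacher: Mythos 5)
There is a genuine gap, and it sits exactly where you flag it yourself: the right-peeling inclusion $J_{2p} \subseteq J_{2(p-1)} \otimes R$, and with it the crossed factor $R \otimes V \otimes J_{2(p-1)}$ of your odd case, are made to rest on an invariance of the construction under the order-reversing anti-automorphism of $T(V)$ which you never prove --- you write ``granting this naturality''. That claim is not a formality: the recursion \eqref{eq:j_imparbis} is manifestly asymmetric (the leading block $m_{1}$ may vanish while the trailing block is forced nonzero, since $\bar{m}$ has length exactly $p$), and your ``real content'' --- that adjoining the trailing-empty partition terms does not shrink the space --- amounts to a symmetric description of $J_{2p}$, a statement of essentially the same depth as the lemma itself and one the paper never establishes nor needs. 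Gesturing at \eqref{eq:sprel} and tensor-intersection faithfulness does not discharge it, so as written half of the lemma is reduced to an unproven assertion.

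The deeper issue is that the obstruction you built the detour around is illusory. You assumed that peeling $R$ off the right requires realizing $R$ as the \emph{last} $J$-block of a defining pattern with trailing block $m_{p} = 0$ (the mirror of your left-peeling), which is indeed blocked. But the paper peels on the right without ever touching the trailing block: taking $\bar{n}$ of length one (so that $J_{i-2}$ is the \emph{first} $J$-block) and $\bar{m} = (0,\dots,0,N)$, the definition directly gives $J_{i} \subseteq (J_{i-2} \otimes T(V)_{>0}) \cap R^{(i/2)}$, all trailing $V$'s being lumped into $T(V)_{>0}$; then, since $J_{i-2} \subseteq R^{((i-2)/2)}$ and $R^{(i/2)} = R^{((i-2)/2)} \otimes R$, Corollary \ref{corollary:interincl2} together with Lemma \ref{lemma:interj} upgrades the tail: $J_{i} \subseteq (J_{i-2} \otimes T(V)_{>0}) \cap (R^{((i-2)/2)} \otimes R) \subseteq J_{i-2} \otimes R$. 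The odd case is handled the same way, with $V \otimes R^{((i-3)/2)} \otimes R$ in place of $R^{(i/2)}$ and the inclusion $J_{i-2} \subseteq V \otimes R^{((i-3)/2)}$, so the crossed factor never has to be produced and no opposite-algebra symmetry enters anywhere. For the record, your left-peeling argument ($J_{2p} \subseteq R \otimes J_{2(p-1)}$ via $\bar{n} = (1,\bar{n}')$, $m_{1} = 0$, and uniqueness of coefficients from the left tensor-intersection faithfulness of $R$, Proposition \ref{proposition:imp}) is correct and is in substance the ``completely analogous'' case the paper leaves to the reader; your diagnosis of the trailing-block asymmetry is accurate as a limitation of the pattern-matching strategy. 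The error is concluding that the only way around it is a symmetry of the $J_{i}$'s, when the lump-and-upgrade via Corollary \ref{corollary:interincl2} --- a tool you cite but do not deploy --- settles the hard half in two lines.
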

%%%%%%%
\begin{proof}
The inclusion of the statement is equivalent to the two inclusions
\[     J_{i} \subseteq R \otimes J_{i-2}, \hskip 1cm \text{and} \hskip 1cm J_{i} \subseteq J_{i-2} \otimes R.     \]   
We shall only prove the second one, for the other is completely analogous. 
Let us first suppose that $i$ is even. 
In this case, we trivially see from definition \eqref{eq:j_impar} that 
\[     J_{i} \subseteq (J_{i-2} \otimes T(V)_{>0}) \cap R^{(\frac{i}{2})},      \]
by only considering the partitions of length $1$. 
On the other hand, since $J_{i-2} \subseteq R^{((i - 2)/2)}$, Corollary \ref{corollary:interincl2} and Lemma \ref{lemma:interj} yield that 
\[     J_{i} \subseteq (J_{i-2} \otimes T(V)_{>0}) \cap (R^{(\frac{i-2}{2})} \otimes R) \subseteq J_{i-2} \otimes R.      \]

If $i$ is odd, from \eqref{eq:j_impar} and \eqref{eq:j_par} we easily show that 
\[         J_{i} \subseteq (J_{i-2} \otimes T(V)_{>0}) \cap (V \otimes R^{(\frac{i-3}{2})} \otimes R).      \]
Using that $J_{i-2} \subseteq V \otimes R^{(\frac{i-3}{2})}$, Corollary \ref{corollary:interincl2} and Lemma \ref{lemma:interj}, 
we see that $J_{i} \subseteq J_{i-2} \otimes R$, which was to be shown. 
The lemma is thus proved. 
\end{proof}

Moreover, the previous lemma can also be strengthen in the next form.
%%%%%%%
\begin{corollary}
\label{corollary:2recur}
Let $\{ J_{i} \}_{i \in \NN_{0}}$ be the collection of graded vector subspaces of $T(V)$ defined by $J_{0} = k$ and the recursive identities \eqref{eq:j_impar} and \eqref{eq:j_par}. 
Then, for all $i \in \NN_{0}$ and $j \geq i$ except if $i$ is odd and $j$ is even, we have the inclusion 
\[     J_{j} \subseteq  (J_{j-i} \otimes J_{i}) \cap (J_{i} \otimes J_{j-i}).     \] 
On the other hand, if $i \in \NN$ is odd and $j \geq i$ is even, we have the contention
\[     J_{j} \subseteq (T(V) \otimes J_{j-i} \otimes J_{i}) \cap (J_{j-i} \otimes T(V) \otimes J_{i}) \cap (J_{j-i} \otimes J_{i} \otimes T(V)),     \]
and also the same inclusion for $i$ and $j-i$ interchanged. 
\end{corollary}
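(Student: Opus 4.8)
The plan is to prove the statement by separating cases according to the parities of $i$ and $j$, reducing everything to the two-step recursion of Lemma \ref{lemma:2recur} (which is exactly the case $i=2$) together with the intersection identities of Proposition \ref{proposition:imp} and Corollary \ref{corollary:interincl2}, and the tensor-intersection faithfulness of the spaces $J_{l}$ granted by Lemma \ref{lemma:interj}. Throughout it suffices to establish each left-factor inclusion, since the right-factor versions then follow from the evident left-right symmetry of the defining recursions \eqref{eq:j_impar} and \eqref{eq:j_par}.

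First I would dispose of the case $i$ even, writing $i = 2a$ with $j \geq 2a$. Iterating the left inclusion of Lemma \ref{lemma:2recur}, namely $J_{l} \subseteq R \otimes J_{l-2}$, exactly $a$ times yields the suffix containment $J_{j} \subseteq R^{(a)} \otimes J_{j-2a}$. Dually, iterating the right inclusion $J_{l} \subseteq J_{l-2} \otimes R$ down to index $2a$ gives, when $j=2b$ is even, the prefix containment $J_{j} \subseteq J_{2a} \otimes R^{(b-a)}$; when $j$ is odd I would first peel off a copy of $V$ using \eqref{eq:j_par}, i.e. $J_{j} \subseteq J_{j-1} \otimes V$, and then iterate on the even index $j-1$ to reach $J_{j} \subseteq J_{2a} \otimes R^{((j-1)/2-a)} \otimes V$. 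In either case the larger left space $R^{(a)}$ is tensor-intersection faithful (by Example \ref{example:tif} and Remark \ref{remark:tensbuenomalo}) and contains $J_{2a}$, while the right factors contain $J_{j-2a}$, so Corollary \ref{corollary:interincl2} turns the intersection of the prefix and suffix containments into $J_{j} \subseteq J_{2a} \otimes J_{j-2a}$, as desired.

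Next I would derive the case $i$ odd and $j$ odd, writing $i = 2d+1$ and $j-i = 2c$. Here I would invoke $J_{i} = (V \otimes J_{2d}) \cap (J_{2d} \otimes V)$ from \eqref{eq:j_par} and reduce to the even case just proved. Extracting the even block $J_{2d}$ by the even case gives $J_{j} \subseteq J_{2c+1} \otimes J_{2d} \subseteq J_{2c} \otimes (V \otimes J_{2d})$, using $J_{2c+1} \subseteq J_{2c} \otimes V$; whereas applying the even case to $J_{j-1}$ and then \eqref{eq:j_par} gives $J_{j} \subseteq J_{j-1} \otimes V \subseteq J_{2c} \otimes (J_{2d} \otimes V)$. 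Since $J_{2c}$ is left tensor-intersection faithful by Lemma \ref{lemma:interj}, by definition the intersection passes through its left factor, producing $J_{j} \subseteq J_{2c} \otimes \big((V \otimes J_{2d}) \cap (J_{2d} \otimes V)\big) = J_{j-i} \otimes J_{i}$.

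Finally, the exceptional case $i$ odd, $j$ even is where I expect the real difficulty. Writing $i = 2d+1$ and $j-i = 2c+1$, both intended factors are odd, so there is no clean splitting of the even space $J_{j}$ as $J_{j-i} \otimes J_{i}$; this is precisely why only the three $T(V)$-padded inclusions survive. My plan is to extract the neighbouring even blocks $J_{2d}$ and $J_{2c}$ by the already-proved even case and to absorb the single leftover tensor slot of each odd block into one of the three admissible gaps, yielding the three stated inclusions together with their interchanged versions. The main obstacle is that the outer factor through which one would wish to pull an intersection is now all of $T(V)$, which is \emph{not} tensor-intersection faithful, so neither Proposition \ref{proposition:imp} nor Corollary \ref{corollary:interincl2} applies directly as in the two previous cases; I therefore anticipate arguing each of the three padded inclusions individually, settling the base instance $i=1$ (that is $J_{2b} \subseteq J_{2b-1} \otimes V \otimes T(V)$ and its two siblings) directly from the explicit description \eqref{eq:j_impar} of $J_{2b}$ and the inclusions $J_{2l} \subseteq R^{(l)}$, and then bootstrapping general odd $i$ from the case $i=1$ together with the even case.
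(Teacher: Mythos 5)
Your treatment of the first two cases is correct and essentially the paper's own argument: for $i$ even you iterate Lemma \ref{lemma:2recur} on both sides (peeling one $V$ via \eqref{eq:j_par} when $j$ is odd) and merge the prefix and suffix containments with Corollary \ref{corollary:interincl2}, exactly as the paper does. Your odd--odd case is valid but unnecessarily roundabout: since $j-i$ is even, your own even case with even index $i' = j-i$ already gives both $J_{j} \subseteq J_{i'} \otimes J_{j-i'} = J_{j-i}\otimes J_{i}$ and the mirrored inclusion, which is the paper's one-line reduction; your detour through $J_{2c} \otimes \bigl((V\otimes J_{2d})\cap(J_{2d}\otimes V)\bigr)$ using left tensor-intersection faithfulness of $J_{2c}$ does work, but buys nothing.

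The genuine gap is in the exceptional case ($i$ odd, $j$ even), which is the substantive content of the corollary and which you leave as an unexecuted plan built on a misdiagnosis. You assert that Corollary \ref{corollary:interincl2} cannot be used because the outer factor $T(V)$ is not tensor-intersection faithful; but that corollary only requires the faithful space on the side where the refinement happens, and $T(V)$ can perfectly well play the role of the ambient space $W \supseteq W'$. This is exactly how the paper closes the case: it combines $J_{j} \subseteq T(V)\otimes J_{j-1}\otimes V$ (from \eqref{eq:j_imparbisbis}), $J_{j-1} \subseteq J_{j-i}\otimes R^{((i-1)/2)}$ (Lemma \ref{lemma:2recur} iterated), and $J_{j} \subseteq T(V)\otimes J_{i}$ (from the definitions), and then applies the right-handed Corollary \ref{corollary:interincl2} with $U = R^{((i-1)/2)}\otimes V$ (right tensor-intersection faithful by Example \ref{example:tif} and Remark \ref{remark:tensbuenomalo}), $U' = J_{i}$, $W = T(V)$, $W' = T(V)\otimes J_{j-i}$, obtaining $(T(V)\otimes J_{i}) \cap (T(V)\otimes J_{j-i}\otimes R^{((i-1)/2)}\otimes V) = T(V)\otimes J_{j-i}\otimes J_{i}$. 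Your bootstrap from $i=1$ stalls precisely where this step is needed: composing the base instance with the even case only yields the coarse containment $J_{j} \subseteq T(V)\otimes J_{j-i}\otimes J_{i-1}\otimes V$, and since $T(V)$ (hence $T(V)\otimes J_{j-i}$) is not left faithful, your stated toolkit provides no mechanism to refine the tail $J_{i-1}\otimes V$ down to $J_{i} = (V\otimes J_{i-1})\cap(J_{i-1}\otimes V)$ behind the $T(V)$ gap; the ingredient $J_{j}\subseteq T(V)\otimes J_{i}$ together with the one-sided Corollary \ref{corollary:interincl2} is what performs that refinement, and for $i\geq 3$ (unlike your base case, whose tail $J_{1}=V$ needs no refinement) it cannot be dispensed with. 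Moreover, your sketch does not address the middle inclusion $J_{j} \subseteq J_{j-i}\otimes T(V)\otimes J_{i}$ at all; the paper obtains it by intersecting $T(V)\otimes J_{i}$ and $J_{j-i}\otimes T(V)$ inside $R^{((j-i-1)/2)}\otimes V\otimes T(V)\otimes V\otimes R^{((i-1)/2)}$ (using $J_{j}\subseteq R^{(j/2)}$) and applying Corollary \ref{corollary:interincl2} at both ends, an argument of a different shape from anything in your plan. As written, the exceptional case remains unproved.
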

%%%%%%%
\begin{proof}
We note that the condition for the first inclusion means that either $i \in \NN_{0}$ is even and $j \geq i$ is arbitrary, or both $i \in \NN$ and $j \geq i$ are odd.

Let us prove the first inclusion of the statement for $i \in \NN_{0}$ even and $j \geq i$. 
If $i = 0$, the inclusion is obvious, so we will assume that $i \geq 2$. 
We shall only show the contention $J_{j} \subseteq J_{j-i} \otimes J_{i}$, for the other is analogous. 
By the previous lemma we have that $J_{j} \subseteq J_{j-i} \otimes R^{(i/2)}$. 
We shall analyse two cases. 
If $j$ is also even, then Lemma \ref{lemma:2recur} also shows that $J_{j} \subseteq R^{((j-i)/2)} \otimes J_{i}$, which in turn implies that $J_{j} \subseteq J_{j-i} \otimes J_{i}$, 
by Corollary \ref{corollary:interincl2}. 
If $j$ is odd, then $J_{j} \subseteq V \otimes J_{j-1}$ by definition, and, since $j-1$ is even, Lemma \ref{lemma:2recur} tells us that $J_{j-1} \subseteq R^{((j-i-1)/2)} \otimes J_{i}$. 
Therefore, $J_{j} \subseteq V \otimes R^{((j-i-1)/2)} \otimes J_{i}$. 
Corollary \ref{corollary:interincl2} now gives the desired contention in the case that $j$ is odd. 

Let us now proceed to prove the first inclusion of the statement for $i \in \NN$ odd and $j \geq i$ also odd, which is the remaining case. 
We note however that this case follows from the previously analysed case when $i \in \NN$ is even and $j \geq i$ is arbitrary, by interchanging $i$ with $j-i$.  

Let us now consider the second inclusion of the statement, which has the assumption that $i \in \NN$ is odd and $j \geq i$ is even. 
In this case, we will first show the contention $J_{j} \subseteq T(V) \otimes J_{j-i} \otimes J_{i}$. 
By \eqref{eq:j_imparbisbis} we see that $J_{j} \subseteq T(V) \otimes J_{j - 1} \otimes V$. 
Since $i-1$ is even, the previous lemma tells us that $J_{j-1} \subseteq J_{j-i} \otimes R^{((i-1)/2))}$, so 
$J_{j} \subseteq T(V) \otimes J_{j-i} \otimes R^{((i-1)/2))} \otimes V$. 
On the other hand, it is trivial to see from the definitions \eqref{eq:j_impar} and \eqref{eq:j_par} that $J_{j} \subseteq T(V) \otimes J_{i}$. 
Now, using the inclusions $J_{i} \subseteq R^{((i-1)/2))} \otimes V$ and $T(V) \otimes J_{j-i} \subseteq T(V)$, together with Corollary \ref{corollary:interincl2}, 
we conclude that $J_{j} \subseteq T(V) \otimes J_{j-i} \otimes J_{i}$, which was to be shown. 
The inclusion $J_{j} \subseteq J_{j-i} \otimes J_{i} \otimes T(V)$ is proved in an analogous manner. 

Finally, we shall prove the inclusion $J_{j} \subseteq J_{j-i} \otimes T(V) \otimes J_{i}$, under the assumption that $i \in \NN$ is odd and $j \geq i$ is even. 
The definitions \eqref{eq:j_impar} and \eqref{eq:j_par} tell us that $J_{j} \subseteq (T(V) \otimes J_{i}) \cap R^{(j/2)}$, and the latter is obviously included in 
\[     (T(V) \otimes J_{i}) \cap (R^{((j-i-1)/2)} \otimes V \otimes T(V) \otimes V \otimes R^{((i-1)/2)}).     \]
In the same manner, we have the inclusion $J_{j} \subseteq (J_{j-i} \otimes T(V)) \cap R^{(j/2)}$, and the latter is obviously included in 
\[     (J_{j-i} \otimes T(V)) \cap (R^{((j-i-1)/2)} \otimes V \otimes T(V) \otimes V \otimes R^{((i-1)/2)}).     \]
By Corollary \ref{corollary:interincl2} we get the desired contention. 
The corollary is thus proved. 
\end{proof}

In order to handle the elements of the graded vector spaces $J_{i}$ we will use the following notation. 
Let us suppose that $W_{1}, \dots, W_{m}$ are graded vector subspaces of $T(V)_{>0}$. 
An element $\omega \in W_{1} \otimes \dots \otimes W_{m} \subseteq T(V)_{>0}$ can be written as a finite sum 
\[       \sum_{(\omega)} \omega_{(1)}| \omega_{(2)}| \dots|\omega_{(i-1)} |\omega_{(m)},     \]
where the elements $\omega_{(i)} \in W_{i}$, for $i = 1, \dots, m$. 
If (say) the vector subspaces $W_{j_{1}}, \dots, W_{j_{l}}$ of $T(V)_{>0}$, for $1 \leq j_{1} < \dots < j_{l} \leq m$, coincide with the graded vector subspace $V \subseteq T(V)_{>0}$, 
we shall typically denote this by writing a bar over each of the factors $\omega_{(j_{1})}, \dots, \omega_{(j_{l})}$. 
Moreover, we shall usually omit the sum, which will be implicitly assumed. 

Let $j \in \NN_{0}$. 
By \eqref{eq:j_par}, we see that, if $\omega \in J_{2 j + 1}$, then we may write it either in the form $\omega = \bar{\omega}_{(1)} | \omega_{(2)}$ 
or in the form $\omega = \omega_{(1)} | \bar{\omega}_{(2)}$, where the sum is implicit as explained before. 
We may thus consider the elements $(\pi \otimes \mathrm{id}_{J_{2 j}}) (\omega) \otimes 1$ and $1 \otimes (\mathrm{id}_{J_{2 j}} \otimes \pi) (\omega)$ in $A \otimes J_{2 j} \otimes A$, 
which may be written as $\pi(\bar{\omega}_{(1)}) | \omega_{(2)} | 1$ and $1 | \omega_{(1)} | \pi(\bar{\omega}_{(2)})$, respectively. 
Note that these elements are uniquely defined by Corollary \ref{corollary:maps}. 
We then get the map $J_{2 j + 1} \rightarrow A \otimes J_{2 j} \otimes A$ given by 
\[     \omega \mapsto \pi(\bar{\omega}_{(1)}) | \omega_{(2)} | 1 - 1 | \omega_{(1)} | \pi(\bar{\omega}_{(2)}),     \]
which we may also simply write as 
\[     \omega \mapsto \bar{\omega}_{(1)} | \omega_{(2)} | 1 - 1 | \omega_{(1)} | \bar{\omega}_{(2)},     \]
using the obvious identification of $V$ inside $A$. 
We shall denote by $\delta_{2 j + 1}^{b} : A \otimes J_{2 i + 1} \otimes A \rightarrow A \otimes J_{2 j} \otimes A$, the $A^{e}$-linear extension of the former. 
Note that it is a morphism of graded $A$-bimodules. 
Moreover, notice that the morphism $\delta_{1}^{b}$ given before coincides with the corresponding differential (denoted in the same way) given in \eqref{eq:resprojminbi}. 

On the other hand, for $j \in \NN$, we consider a map ${}^{j-1}\tilde{T} \rightarrow A \otimes J_{2 j - 1} \otimes A$ of graded vector spaces given by the sum of the maps 
${}^{j-1}\tilde{T}^{N} \rightarrow A \otimes J_{2 j - 1} \otimes A$ of graded vector spaces, for all $N \in \NN_{0}$, which we now define. 
Given $\omega \in {}^{j-1}\tilde{T}^{N}$, then, for all $0 \leq l \leq N$, 
we may write it in the form $\omega = \omega_{(1)}^{l} | \omega_{(2)}^{l} | \omega_{(3)}^{l}$, where $\omega_{(1)}^{l} \in V^{(l)}$, $\omega_{(2)}^{l} \in J_{2 j - 1}$ 
and $\omega_{(3)}^{l} \in V^{(N-l)}$, where the sum is explicit as explained before. 
For each $0 \leq l \leq N$ we may thus consider the element 
\[     (\pi_{|_{V^{(l)}}} \otimes \mathrm{id}_{J_{2 j - 1}} \otimes \pi_{|_{V^{(N-l)}}}) (\omega) = \pi(\omega_{(1)}^{l}) | \omega_{(2)}^{l} | \pi(\omega_{(3)}^{l}),     \] 
in $A \otimes J_{2 j - 1} \otimes A$. 
Again, note that these elements are uniquely defined by Corollary \ref{corollary:maps}. 
We have thus obtained a map 
\[     {}^{j-1}\tilde{T} \rightarrow A \otimes J_{2 j -1} \otimes A     \] 
of graded vector spaces formed by the sum of the maps 
\[     {}^{j-1}\tilde{T}^{N} \rightarrow A \otimes J_{2 j -1} \otimes A     \] 
given by 
\[     \omega \mapsto \sum_{l = 0}^{N} \pi(\omega_{(1)}^{l}) | \omega_{(2)}^{l} | \pi(\omega_{(3)}^{l}).     \]
The inclusion $J_{2 j} \subseteq {}^{j-1}\tilde{T}$ of graded vector spaces in turn induces a map of graded vector spaces 
\[     J_{2 j} \rightarrow A \otimes J_{2 j -1} \otimes A     \] 
We shall denote by $\delta_{2 j}^{b} : A \otimes J_{2 j} \otimes A \rightarrow A \otimes J_{2 j - 1} \otimes A$, the $A^{e}$-linear extension of the former. 
Note that it is a morphism of graded $A$-bimodules. 
Note that $\delta_{2}^{b} : A \otimes J_{2} \otimes A \rightarrow A \otimes J_{1} \otimes A$ coincides with the one given by the same name in \eqref{eq:resprojminbi}. 

We have thus defined a collection of graded $A$-bimodules $\{ K_{L-R} (A)_{i} = A \otimes J_{i} \otimes A \}_{i \in \NN_{0}}$ provided with morphisms of graded $A$-bimodules $\delta_{i}^{b} : K_{L-R} (A)_{i} \rightarrow K_{L-R} (A)_{i-1}$ for $i \in \NN$. 
We have that 
%%%%%%%
\begin{proposition} 
Let $A$ be a locally finite dimensional nonnegatively graded algebra, and let $\{ J_{i} \}_{i \in \NN_{0}}$ be the collection of graded vector subspaces of $T(V)$ defined by $J_{0} = k$ and the recursive identities \eqref{eq:j_impar} and \eqref{eq:j_par}. 
We consider the collection $\{ K_{L-R} (A)_{i} = A \otimes J_{i} \otimes A \}_{i \in \NN_{0}}$ of graded $A$-bimodules 
provided with morphisms of graded $A$-bimodules $\delta_{i}^{b} : K_{L-R} (A)_{i} \rightarrow K_{L-R} (A)_{i-1}$, for $i \in \NN$, which were defined 
in the three previous paragraphs. 
It gives in fact a complex of graded $A$-bimodules, \textit{i.e.} $\delta_{i-1}^{b} \circ \delta_{i}^{b} = 0$, for $i \in \NN_{\geq 2}$.
\end{proposition}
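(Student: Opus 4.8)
The strategy is to verify $\delta_{i-1}^{b}\circ\delta_{i}^{b}=0$ directly on the bimodule generators $1\otimes\omega\otimes 1$ with $\omega\in J_{i}$, all the maps being $A^{e}$-linear, and since every space in sight is bigraded I may assume that $\omega$ is homogeneous for both the usual and the special grading, so that it has a fixed finite special degree $N$. Two cases arise according to the parity of $i$, and in each of them the composite collapses, by a telescoping cancellation of its interior terms, to a difference of two boundary terms which I will show to vanish. The two tools that make the telescoping legitimate are Lemma~\ref{lemma:interj}, which guarantees that all the $J_{i}$ are tensor-intersection faithful, and Corollary~\ref{corollary:maps}, which then ensures that the maps $\pi\otimes\mathrm{id}\otimes\pi$ used below are well defined on the relevant subspaces of $T(V)$, independently of the chosen tensor presentation of $\omega$.

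First I would treat $i=2j$, that is $\delta_{2j-1}^{b}\circ\delta_{2j}^{b}$. Viewing $\omega\in J_{2j}\subseteq{}^{j-1}\tilde{T}^{N}$ and cutting it at every position $0\le l\le N$ as $\omega=\omega_{(1)}^{l}|\omega_{(2)}^{l}|\omega_{(3)}^{l}$ with $\omega_{(2)}^{l}\in J_{2j-1}$, the even map gives $\sum_{l}\pi(\omega_{(1)}^{l})|\omega_{(2)}^{l}|\pi(\omega_{(3)}^{l})$; applying the odd map $\delta_{2j-1}^{b}$ to each middle factor $\omega_{(2)}^{l}\in J_{2j-1}=(V\otimes J_{2j-2})\cap(J_{2j-2}\otimes V)$ pushes its first letter to the left with a plus sign and its last letter to the right with a minus sign. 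Writing $D_{a}=(\pi\otimes\mathrm{id}_{J_{2j-2}}\otimes\pi)(\omega)$ for $\omega$ regarded inside $V^{(a)}\otimes J_{2j-2}\otimes V^{(N+1-a)}$, the left-push term at cut $l$ is exactly $D_{l+1}$ and the right-push term is $D_{l}$, so the whole composite equals $\sum_{l=0}^{N}(D_{l+1}-D_{l})=D_{N+1}-D_{0}$. Now Lemma~\ref{lemma:2recur} yields $J_{2j}\subseteq(R\otimes J_{2j-2})\cap(J_{2j-2}\otimes R)$, so $\omega$ admits a presentation with a factor of $R$ in the outermost left (resp.\ right) slot; since $\pi$ vanishes on $R\subseteq I$, evaluating $\pi\otimes\mathrm{id}_{J_{2j-2}}$ (resp.\ $\mathrm{id}_{J_{2j-2}}\otimes\pi$) on $\omega$ through that presentation gives $0$, whence $D_{N+1}=D_{0}=0$. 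The initial case $\delta_{1}^{b}\circ\delta_{2}^{b}$ of \eqref{eq:resprojminbi} is the subcase $J_{0}=k$, where this reduces to $\pi(\omega)=0$ for $\omega\in R$.

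The case $i=2j+1$, namely $\delta_{2j}^{b}\circ\delta_{2j+1}^{b}$, is entirely parallel. Writing $\omega\in J_{2j+1}=(V\otimes J_{2j})\cap(J_{2j}\otimes V)$ in its two forms $\bar{\omega}_{(1)}|\omega_{(2)}$ and $\omega_{(1)}|\bar{\omega}_{(2)}$ with $\omega_{(1)},\omega_{(2)}\in J_{2j}$, applying first $\delta_{2j+1}^{b}$ and then $\delta_{2j}^{b}$ to the resulting middle factors, and absorbing the outer letters $\bar{\omega}_{(1)},\bar{\omega}_{(2)}$ into the adjacent $V$-blocks, produces the two sums $\sum_{a=1}^{N+1}E_{a}$ and $\sum_{a=0}^{N}E_{a}$, entering with opposite signs, where $E_{a}=(\pi\otimes\mathrm{id}_{J_{2j-1}}\otimes\pi)(\omega)$ for $\omega$ seen inside $V^{(a)}\otimes J_{2j-1}\otimes V^{(N+1-a)}$. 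The interior terms cancel and the composite equals $E_{N+1}-E_{0}$; these vanish because Corollary~\ref{corollary:2recur}, applied with the even index $2$, gives $J_{2j+1}\subseteq(R\otimes J_{2j-1})\cap(J_{2j-1}\otimes R)$, so again an $R$-presentation of $\omega$ together with $\pi|_{R}=0$ kills both boundary terms.

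The main obstacle is bookkeeping rather than conceptual: I must justify that the interior term at cut $l$ of one sum genuinely coincides with the interior term at cut $l+1$ of the other, which is precisely the statement that the value $(\pi\otimes\mathrm{id}\otimes\pi)(\omega)$ depends only on the ambient space $V^{(a)}\otimes J_{2j-\ast}\otimes V^{(b)}\subseteq T(V)$ in which $\omega$ is read, and not on the particular way of writing $\omega$ there. This well-definedness is supplied by Lemma~\ref{lemma:interj} and Corollary~\ref{corollary:maps}; the only extra point requiring care is tracking the gradings so that $\omega_{(1)}$ and $\omega_{(2)}$ in the second case indeed share the same special degree $N$, which is what makes the two telescoping ranges line up to leave only $E_{N+1}-E_{0}$.
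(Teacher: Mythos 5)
Your strategy is the paper's own: compute on the bimodule generators $1\otimes\omega\otimes 1$, telescope, kill the two surviving boundary terms via Lemma \ref{lemma:2recur} together with the fact that $\pi$ vanishes on $R\subseteq I$, and invoke Lemma \ref{lemma:interj} and Corollary \ref{corollary:maps} to make the maps $\pi\otimes\mathrm{id}\otimes\pi$ presentation-independent. There is, however, one genuine gap in your opening reduction: you may \emph{not} assume that $\omega\in J_{i}$ is homogeneous for the special grading. The spaces $J_{i}$ are graded subspaces of $T(V)$ only for the usual grading; they are not special-graded, because $R$ itself need not be spanned by tensor-homogeneous elements. The paper's motivating Example \ref{example:sym} already shows this for $J_{2}=R$: the super Yang--Mills relation $r_{0,i}$ mixes the tensor degrees $2$ and $3$ (the terms $[x_{j},[x_{j},x_{i}]]$ lie in $V^{(3)}$ while $[z_{a},z_{b}]$ lies in $V^{(2)}$). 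Once $\omega=\sum_{N}\omega^{N}$ genuinely has several special components, your individual boundary terms $D_{N+1}$, $D_{0}$ (resp.\ $E_{N+1}$, $E_{0}$) need not vanish: Lemma \ref{lemma:2recur} places the \emph{total} element $\omega$ in $(R\otimes J_{i-2})\cap(J_{i-2}\otimes R)$, not each $\omega^{N}$ separately. The repair is mechanical and is exactly what the paper does: keep the finite-support sum over $N$ throughout, telescope each special component, and then apply $\pi\otimes\mathrm{id}_{J_{i-2}}$ (well defined by Corollary \ref{corollary:maps}) to the recombined total sum $\sum_{N}$ of boundary presentations, which equals $\omega\in R\otimes J_{i-2}$ and hence dies under $\pi$; symmetrically on the right. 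With this modification your argument becomes the paper's proof.

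The ``extra point requiring care'' you flag in the odd case is indeed the delicate spot, and it is aggravated by the issue above: for $i=2j+1$ the two presentations yield two a priori unrelated decompositions $\omega=\sum_{N}\omega^{N}$ with $\omega^{N}\in V\otimes{}^{j-1}\tilde{T}^{N}$ and $\omega=\sum_{N}\omega'^{N}$ with $\omega'^{N}\in{}^{j-1}\tilde{T}^{N}\otimes V$, and your cross-presentation telescoping needs them to line up per $N$. This can actually be proved: both $V\otimes{}^{j-1}\tilde{T}^{N}$ and ${}^{j-1}\tilde{T}^{N}\otimes V$ are contained in $V\otimes R^{(j-1)}\otimes V^{(N+1)}$ (use $J_{2j-1}\subseteq R^{(j-1)}\otimes V$), and the family $\{V\otimes R^{(j-1)}\otimes V^{(M)}\}_{M}$ is independent by two applications of Corollary \ref{corollary:interind}, whence $\omega^{N}=\omega'^{N}$ for all $N$, and the interior terms of the two expansions agree by Corollary \ref{corollary:maps}. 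You state this alignment as a bookkeeping task rather than proving it; to be fair, the paper's proof also compresses it into the phrase ``direct telescopic cancellation,'' so here your sketch is no less complete than the published argument --- but unlike the homogeneity reduction, this step is true as stated and only needs the verification above.
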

%%%%%%%
\begin{proof}
We have thus to prove that $\delta_{i-1}^{b} \circ \delta_{i}^{b} = 0$, for $i \in \NN_{\geq 2}$.
By the exactness of the complex \eqref{eq:resprojminbi}, we get that the previous equality holds for $i=2$. 
Let us suppose that $i \geq 3$. 
We first suppose that $i$ is odd, and we further assume that $\omega \in J_{i}$ is given by $\omega = \sum_{N \in \NN_{0}} \omega^{N} = \sum_{N \in \NN_{0}} \omega'^{N}$, 
where $\omega^{N} \in V \otimes {}^{(i-3)/2}\tilde{T}^{N}$, $\omega'^{N} \in {}^{(i-3)/2}\tilde{T}^{N} \otimes V$, and both sums have finite support. 
Then 
\begin{align*}
     (\delta_{i-1}^{b} \circ \delta_{i}^{b}) (1 | \omega | 1) &= \delta_{i-1}^{b} (\sum_{N \in \NN_{0}} \pi(\bar{\omega}_{(1)}^{N}) | \omega_{(2)}^{N} | 1 
                                                                                                                                                 - 1 | \omega'^{N}_{(1)} | \pi(\bar{\omega}'^{N}_{(2)})) 
     \\
                                                                                    &= \sum_{N \in \NN_{0}} \bar{\omega}_{(1)}^{N} \delta_{i-1}^{b}(\omega_{(2)}^{N})  
                                                                                          - \delta_{i-1}^{b}(\omega'^{N}_{(1)}) \bar{\omega}'^{N}_{(2)}
     \\
                                                                                    &= \sum_{N \in \NN_{0}} \sum_{l=0}^{N} 
                                                                                         \bar{\omega}_{(1)}^{N} \pi(\omega_{(2)}^{N,l}) | \omega_{(3)}^{N,l} | \pi(\omega_{(4)}^{N,l})  
      \\ 
                                                                                     &\phantom{=}- \sum_{N \in \NN_{0}} \sum_{l=0}^{N} 
                                                                                       \pi(\omega'^{N,l}_{(1)}) |  \omega'^{N,l}_{(2)} | \pi(\omega'^{N,l}_{(3)}) \bar{\omega}_{(4)}^{N}.
\end{align*}
By a direct telescopic cancellation, the last member coincides with 
\[          \sum_{N \in \NN_{0}} \bar{\omega}_{(1)}^{N} \pi(\omega_{(2)}^{N,N}) | \omega_{(3)}^{N,N} | 1  
- \sum_{N \in \NN_{0}} 1 |  \omega'^{N,0}_{(2)} | \pi(\omega'^{N,0}_{(3)}) \bar{\omega}_{(4)}^{N},     \] 
where we have set without loss of generality that $\omega_{(4)}^{N,N} = 1$ and $\omega'^{N,0}_{(1)} = 1$.
The sum vanishes, for Lemma \ref{lemma:2recur} tells us that  
\begin{align*}
   \omega &= \sum_{N \in \NN_{0}} \bar{\omega}_{(1)}^{N} \omega_{(2)}^{N,N} | \omega_{(3)}^{N,N} \in R \otimes J_{i - 2},
   \\
   \omega &= \sum_{N \in \NN_{0}} \omega'^{N,0}_{(2)} | \omega'^{N,0}_{(3)} \bar{\omega}_{(4)}^{N} \in J_{i - 2} \otimes R.
\end{align*}
                                                                        
On the other hand, let $i$ be even and and let $\omega \in J_{i}$ be of the form $\omega = \sum_{N \in \NN_{0}} \omega^{N}$, 
where $\omega^{N} \in {}^{(i-2)/2}\tilde{T}^{N}$ and the sum has finite support. 
We have that 
\begin{align*}
     (\delta_{i-1}^{b} \circ \delta_{i}^{b}) (1 | \omega | 1) &= \delta_{i-1}^{b} (\sum_{N \in \NN_{0}} \sum_{l = 0}^{N} \pi(\omega_{(1)}^{N,l}) | \omega_{(2)}^{N,l} | \pi(\omega_{(3)}^{N,l})) 
     \\
                                                                                     &= \sum_{N \in \NN_{0}} \sum_{l = 0}^{N} \pi(\omega_{(1)}^{N,l}) \delta_{i-1}^{b}(\omega_{(2)}^{N,l}) \pi(\omega_{(3)}^{N,l})
     \\
                                                                                     &= \sum_{N \in \NN_{0}} \sum_{l = 0}^{N} \pi(\omega_{(1)}^{N,l}) \bar{\omega}_{(2)}^{N,l} | \omega_{(3)}^{N,l} | \pi(\omega_{(4)}^{N,l})
      \\ 
                                                                                     &\phantom{=}- \sum_{N \in \NN_{0}} \sum_{l = 0}^{N}
                                                                                       \pi(\omega_{(1)}^{N,l}) | \omega_{(2)}^{N,l} | \bar{\omega}_{(3)}^{N,l} \pi(\omega_{(4)}^{N,l}). 
\end{align*}
A telescopic cancellation tells us that the last member coincides with 
\[          \sum_{N \in \NN_{0}} \pi(\omega_{(1)}^{N,N}) \bar{\omega}_{(2)}^{N,N} | \omega_{(3)}^{N,N} | 1  
- \sum_{N \in \NN_{0}} 1 | \omega_{(2)}^{N,0} | \bar{\omega}_{(3)}^{N,0} \pi(\omega_{(4)}^{N,0}),     \]
where we have set without loss of generality that $\omega_{(4)}^{N,N} = 1$ and $\omega_{(1)}^{N,0} = 1$. 
The previous sum vanishes, since Lemma \ref{lemma:2recur} implies that   
\begin{align*}
   \omega &= \sum_{N \in \NN_{0}} \omega_{(1)}^{N,N} \bar{\omega}_{(2)}^{N,N} | \omega_{(3)}^{N,N} \in R \otimes J_{i - 2},
   \\
   \omega &= \sum_{N \in \NN_{0}} \omega^{N,0}_{(2)} | \bar{\omega}^{N,0}_{(3)} \omega_{(4)}^{N,0} \in J_{i - 2} \otimes R.
\end{align*}
This proves the proposition. 
\end{proof}

By the previous statement $(K_{L-R} (A)_{\bullet}, \delta_{\bullet}^{b})$ is a complex of graded $A$-bimodules. 
It can also be regarded as an augmented complex $\delta_{0}^{b} : K_{L-R} (A)_{\bullet} \rightarrow A$ where $\delta_{0}^{b}$ is given by the multiplication of $A$, as in \eqref{eq:resprojminbi}. 
Hence, the augmented complex $K_{L-R} (A)_{\bullet}$ coincides with the minimal projective resolution of $A$ as an $A$-bimodule up to homological degree $2$.  
It will be called the \emph{multi-Koszul bimodule complex} of $A$. 

We also define the (augmented) complex $(K (A)_{\bullet}, \delta_{\bullet})$ (resp., $(K(A)'_{\bullet}, \delta'_{\bullet})$) of graded free left (resp., right) $A$-modules given by 
$(K_{L-R} (A)_{\bullet} \otimes_{A} k, \delta_{\bullet}^{b} \otimes_{A} \mathrm{id}_{k})$ (resp., $(k \otimes_{A} K_{L-R} (A)_{\bullet}, \mathrm{id}_{k} \otimes_{A} \delta_{\bullet}^{b})$), 
which will be called the \emph{left (resp., right) multi-Koszul complex} of $A$. 
We note that in either case the differentials are $A$-linear maps preserving the degree. 
We also remark that the left (resp., right) multi-Koszul complex of $A$ coincides with the minimal projective resolution of the left (resp., right) module $k$ 
seen at the beginning of this section up to homological degree $2$. 

%%%%%%%
\begin{definition}
\label{definition:multikoszul}
Let $A$ be a locally finite dimensional nonnegatively graded connected algebra.
We say that $A$ is \emph{left (resp., right) multi-Koszul} if the (augmented) left (resp., right) multi-Koszul complex of $A$ defined in the previous paragraphs 
provides a resolution of the trivial left (resp., right) $A$-module $k$, and in this case it is called the \emph{left (resp., right) multi-Koszul resolution} for $A$. 
\end{definition}
%%%%%%%

%%%%%%%
\begin{remark}
If $A$ is left (resp., right) multi-Koszul, then the (augmented) left (resp., right) multi-Koszul complex of $A$ is in fact a minimal projective resolution of the trivial left (resp., right) $A$-module $k$. 
Indeed, the left (resp., right) multi-Koszul resolution for $A$ is minimal, by the comments in the antepenultimate paragraph of Section \ref{sec:homo}, 
because the induced differential of the complex $k \otimes_{A} K(A)_{\bullet}$ (resp., $K(A)'_{\bullet} \otimes_{A} k$) vanishes, 
due to \eqref{eq:j_par} and \eqref{eq:j_imparbisbis}. 
It is also straightforward to see that an algebra is left (resp., right) multi-Koszul if and only if its left (resp., right) multi-Koszul complex defined above is acyclic in positive homological degrees. 
\end{remark}
%%%%%%%

We have the following natural result, following the lines of \cite{BM}, Thm. 4.4. 
%%%%%%%
\begin{proposition}
\label{proposition:leftbiright}
Let $A$ be a locally finite dimensional nonnegatively graded connected algebra. 
The following statements are equivalent:
\begin{itemize}
 \item[(i)] $A$ is left multi-Koszul.
 \item[(ii)] The augmented multi-Koszul bimodule complex $\delta_{0}^{b} : K_{L-R}(A)_{\bullet} \rightarrow A$ is exact. 
 \item[(iii)] $A$ is right multi-Koszul.  
\end{itemize}
\end{proposition}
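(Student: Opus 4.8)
The plan is to prove the two implications $(ii) \Rightarrow (i)$ and $(ii) \Rightarrow (iii)$ by the contractibility argument already recalled in Section \ref{sec:homo}, and to obtain the reverse implications $(i) \Rightarrow (ii)$ and $(iii) \Rightarrow (ii)$ from a single homological lemma about complexes of flat modules. For the easy half, assume $(ii)$: the augmented complex $\delta_0^b : K_{L-R}(A)_\bullet \rightarrow A$ is an exact, bounded below complex of free, hence projective, \emph{right} $A$-modules, so it is homotopically trivial as a complex of right $A$-modules. Since $(\place) \otimes_A k$ is additive and preserves homotopies, applying it shows that $K_{L-R}(A)_\bullet \otimes_A k \rightarrow A \otimes_A k = k$ is homotopically trivial, hence exact; but this is precisely the augmented left multi-Koszul complex $K(A)_\bullet \rightarrow k$, so $A$ is left multi-Koszul and $(i)$ holds. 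The implication $(ii) \Rightarrow (iii)$ is identical after applying $k \otimes_A (\place)$ and using that each $K_{L-R}(A)_i = A \otimes J_i \otimes A$ is free as a \emph{left} $A$-module.

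For the substantial half I would isolate the following claim: if $C_\bullet$ is a bounded below complex of graded right $A$-modules that are flat (for instance free), each bounded below in Adams degree, and if $C_\bullet \otimes_A k$ is exact, then $C_\bullet$ is itself exact; and symmetrically on the left. Granting this, $(i) \Rightarrow (ii)$ is immediate: regard the augmented bimodule complex $K_{L-R}(A)_\bullet \rightarrow A$ as a bounded below complex of free right $A$-modules; tensoring with $k$ over $A$ on the right produces the augmented left multi-Koszul complex, which is exact by hypothesis $(i)$; hence the bimodule complex is exact. The implication $(iii) \Rightarrow (ii)$ is the mirror image, viewing $K_{L-R}(A)_\bullet$ as a complex of free left $A$-modules and tensoring with $k$ on the left. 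Combining the four implications gives the three-way equivalence.

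To prove the claim I would use the hyperhomology (hyper-$\Tor$) spectral sequence: since each $C_n$ is flat, $C_\bullet \otimes_A k$ computes the derived tensor product, and there is a convergent spectral sequence $E^2_{p,q} = \Tor_p^A(H_q(C_\bullet),k) \Rightarrow H_{p+q}(C_\bullet \otimes_A k)$, convergence being guaranteed by the homological boundedness below of $C_\bullet$. Suppose $C_\bullet$ were not exact, and let $q_0$ be the least homological degree with $H_{q_0}(C_\bullet) \neq 0$, which exists as $C_\bullet$ is bounded below. On the column $p = 0$ we have $E^2_{0,q_0} = H_{q_0}(C_\bullet) \otimes_A k$; because $H_q(C_\bullet) = 0$ for all $q < q_0$, the entries $E^2_{r,q_0-r+1} = \Tor_r^A(H_{q_0-r+1}(C_\bullet),k)$ vanish for $r \geq 2$, so there are no higher differentials entering this spot, and the differentials leaving it land in negative $p$, hence vanish. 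Therefore $E^2_{0,q_0} = E^\infty_{0,q_0}$ is a subquotient of $H_{q_0}(C_\bullet \otimes_A k) = 0$, whence $H_{q_0}(C_\bullet) \otimes_A k = 0$. But $H_{q_0}(C_\bullet)$ is a nonzero, bounded below graded right $A$-module, so the right module form of the graded Nakayama Lemma \ref{lemma:trivialmod} forces $H_{q_0}(C_\bullet) = 0$, a contradiction; thus $C_\bullet$ is exact.

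The main obstacle is exactly this reverse implication, namely deducing exactness of the two-sided bimodule complex from exactness of a one-sided multi-Koszul complex, since the easy direction only yields information after tensoring down to $k$. The delicate points are ensuring the boundedness hypotheses so that the spectral sequence converges, and that the minimal-degree homology $H_{q_0}$ is bounded below, which is guaranteed because each $A \otimes J_i \otimes A$ is nonnegatively graded and each $J_i$ is bounded below; the use of the right module version of Nakayama is legitimate since, as noted in Section \ref{sec:homo}, all the categorical considerations there apply verbatim to right modules.
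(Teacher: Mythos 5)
Your proof is correct, and its overall architecture coincides with the paper's: you establish $(ii) \Rightarrow (i)$ and $(ii) \Rightarrow (iii)$ exactly as the paper does, by observing that an exact bounded below complex of graded projective one-sided modules is homotopically trivial, so that applying $(\place) \otimes_{A} k$ (resp., $k \otimes_{A} (\place)$) yields an exact complex, which is the corresponding one-sided multi-Koszul complex. The one place where you genuinely diverge is the converse direction: the paper disposes of $(i) \Rightarrow (ii)$ in a single line by citing \cite{BM}, Prop.~4.1 (or \cite{B4}, Lemme 1.6), and that citation is precisely your isolated claim that, for a bounded below complex of graded-free (flat) right $A$-modules whose terms are bounded below in Adams degree, exactness of $C_{\bullet} \otimes_{A} k$ forces exactness of $C_{\bullet}$. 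You prove this claim from scratch via the hyperhomology spectral sequence $E^{2}_{p,q} = \Tor_{p}^{A}(H_{q}(C_{\bullet}),k) \Rightarrow H_{p+q}(C_{\bullet} \otimes_{A} k)$ together with the right-module form of the graded Nakayama Lemma \ref{lemma:trivialmod} (legitimate, as the remarks in Section \ref{sec:homo} note that the one-sided theory applies verbatim to right modules); your corner analysis at the position $(0,q_{0})$ is sound -- the entries $\Tor_{r}^{A}(H_{q_{0}-r+1}(C_{\bullet}),k)$ vanish for $r \geq 2$ by minimality of $q_{0}$, the outgoing differentials land in negative $p$, and convergence is unproblematic since the double complex is bounded below in both directions, so only finitely many bidegrees contribute in each total degree. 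This buys self-containedness at the cost of heavier machinery; note that the spectral sequence you invoke is the same K\"unneth spectral sequence (\cite{W}, Application 5.6.4) that the paper itself deploys in the proof of Proposition \ref{proposition:libre}, so your argument stays entirely within the paper's toolkit, whereas the cited references obtain the lemma by more elementary arguments internal to the category of graded modules.
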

%%%%%%%
\begin{proof}
We shall prove the equivalence of $(i)$ and $(ii)$, the equivalence between $(ii)$ and $(iii)$ being analogous. 
Suppose that $A$ is left multi-Koszul. 
Applying the functor $(\place) \otimes_{A} k$ to the augmented multi-Koszul bimodule complex, we obtain the (augmented) complex $(K(A)_{\bullet},\delta_{\bullet})$, 
which is exact when $A$ is left multi-Koszul. 
Since the $A$-bimodules $K_{L-R}(A)_{i}$ are graded-free and left bounded for all $i\in \mathbb{N}_{0}$,
\cite{BM}, Prop. 4.1, (or \cite{B4}, Lemme 1.6) implies that the augmented complex $\delta_{0}^{b} : K_{L-R}(A)_{\bullet} \rightarrow A$ is exact.

Assume now that the complex $K_{L-R}(A)_{\bullet}$ is exact in positive degrees. 
Since $\delta_{0}^{b} : K_{L-R}(A)_{\bullet} \rightarrow A$ is exact, it is a projective resolution of $A$ in the category of graded left bounded right $A$-modules. 
Since $A$ is a projective right $A$-module, the complex $\delta_{0}^{b} : K_{L-R}(A)_{\bullet} \rightarrow A$ is homotopically trivial as a complex of objects of graded right $A$-modules. 
Therefore, its image under the functor $(\place) \otimes_{A} k$ is \textit{a fortiori} homotopically trivial (as a complex of vector spaces). 
Since this image is the left multi-Koszul complex of $A$, it is exact in positive homological degrees, so $A$ is left multi-Koszul. 
\end{proof}

By the previous results, we shall usually simply say that an algebra $A$ is multi-Koszul, unless we want to emphasize the use of the corresponding complexes. 

%%%%%%%
\begin{remark}
Suppose that $A$ is left multi-Koszul. 
Since there is an obvious isomorphism of complexes of the form $k \otimes_{A^{e}} K_{L-R}(A)_{\bullet} \simeq k \otimes_{A} K(A)_{\bullet}$, 
having in fact vanishing differential, the comments in the antepenultimate paragraph of Section \ref{sec:homo} tell us that 
the complex $(K_{L-R}(A)_{\bullet}, \delta_{\bullet}^{b})$ is a minimal projective resolution of $A$ in the category of graded $A$-bimodules.
\end{remark}
%%%%%%%

%%%%%%%
\begin{remark}
Let $A = T(V)/\cl{R}$ be a locally finite dimensional nonnegatively graded connected algebra. 
Let us we consider a different positive grading on $V$ such that $R$ also remains a graded vector subspace of $T(V)^{\geq 2}$ for the new grading on $V$ 
(\textit{e.g.} we double the grading of $V$). 
Since the condition \eqref{eq:sprel} defining a space of relations is also independent of the grading, we see that $R$ is still a space of relations for the algebra $A$ provided with the new grading. 
This in turn implies that the multi-Koszul complex of $A$ is \emph{grading independent}. 
In particular, $A$ is multi-Koszul for the former grading if and only if it is multi-Koszul for the new grading, 
which we may roughly reformulate as stating that the multi-Koszul property introduced in Definition \ref{definition:multikoszul} is also grading independent. 
\end{remark}
%%%%%%%

Before proceeding further we would like to state some comments on the previous definition. 
As it may have been noticed, we have used the same terminology (\textit{i.e.} left or right multi-Koszul) as the one introduced in \cite{HeRe}, Def. 3.1 and Rmk. 3.3, 
where the nonnegatively graded connected algebras were further assumed to be finitely generated in degree one and with a finite number of relations. 
We claim that the new definition introduced here coincides with the one considered in \cite{HeRe} if the algebras satisfy the assumptions of the latter article. 

In order to do so, we shall use the following little variation of the notation used in \cite{HeRe}, Def. 3.1. 
Suppose for the moment that $A = T(V)/\cl{R}$, where $V$ is a finite dimensional vector space considered to be concentrated in degree $1$, 
and $R = \oplus_{s \in S} R_{s} \subseteq T(V)_{\geq 2}$, for $S \subseteq \NN_{\geq 2}$, is a finite dimensional graded vector space. 
For each $s \in \mathbb{N}_{\geq 2}$, we recall that the map $n_{s}:\mathbb{N}_{0}\rightarrow \mathbb{N}_{0}$ is given by
\[
n_{s}(2 j)=s j, \hskip 0.4cm n_{s}(2j+1)=s j+1.
\]
If $s\in S$, we will denote
\[     \tilde{J}_{i}^{s} = \bigcap_{l=0}^{n_{s}(i)-s} V^{(l)} \otimes R_{s} \otimes V^{(n_{s}(i)-s-l)},     \]
for $i \geq 2$, and $\tilde{J}_{i}^{s} = V^{(i)}$, for $i = 0, 1$. 
Moreover, we define 
\[      
\tilde{J}_{i} = \bigoplus_{s \in S}  \tilde{J}_{i}^{s}, 
\]
if $i \geq 2$, and $\tilde{J}_{i} = V^{(i)}$, if $i = 0, 1$. 

The differential of the left multi-Koszul complex $(A \otimes \tilde{J}_{\bullet})_{\bullet \in \NN_{0}}$ introduced in \cite{HeRe}, Def. 3.1, will be denoted by $\tilde{\delta}_{\bullet}$. 

%%%%%%%
\begin{proposition}
\label{proposition:equiv}
Let $A$ be a nonnegatively graded connected algebras, which is further assumed to be finitely generated in degree one and with a finite number of relations, \textit{i.e.} 
$A = T(V)/\cl{R}$, where $V$ is a finite dimensional vector space considered to be concentrated in degree $1$, 
and $R = \oplus_{s \in S} R_{s} \subseteq T(V)_{\geq 2}$, for $S \subseteq \NN_{\geq 2}$, is a finite dimensional graded vector space. 
Then, the left (resp., right) multi-Koszul complex in the Definition \ref{definition:multikoszul} coincides with the left (resp., right) multi-Koszul complex introduced in \cite{HeRe}, Definition 3.1 (resp., Remark 3.3), which in turn implies that $A$ is left (resp., right) multi-Koszul in the sense of Definition \ref{definition:multikoszul} if and only if it is left (resp., right) multi-Koszul in the sense introduced in \cite{HeRe}, Definition 3.1 (resp., Remark 3.3). 
\end{proposition}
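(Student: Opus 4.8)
The plan is to show directly that the two left (resp.\ right) multi-Koszul complexes coincide term by term and differential by differential; the statement about the multi-Koszul property then follows at once, since two equal complexes resolve the trivial module or fail to do so simultaneously. Since the new left complex is $K(A)_{\bullet} = A \otimes J_{\bullet} \otimes A \otimes_{A} k \cong A \otimes J_{\bullet}$ while the complex of \cite{HeRe} is $A \otimes \tilde{J}_{\bullet}$, and both are graded-free left $A$-modules on the indicated subspaces of $T(V)$, it suffices to prove the two assertions (a) $J_{i} = \tilde{J}_{i}$ as graded vector subspaces of $T(V)$, for all $i \in \NN_{0}$, and (b) the differentials $\delta_{\bullet}$ and $\tilde{\delta}_{\bullet}$ agree. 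I would prove (a) by induction on $i$, carrying as inductive hypothesis the sharper statement that $J_{i} = \tilde{J}_{i} = \oplus_{s \in S} \tilde{J}_{i}^{s}$, where each $\tilde{J}_{i}^{s}$ is homogeneous of (tensor $=$ Adams) degree $n_{s}(i)$; the fact that the summands lie in pairwise distinct degrees for distinct $s$ will be exploited repeatedly.

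The base cases $i = 0, 1, 2$ are immediate from $J_{0} = k = V^{(0)}$, $J_{1} = V = V^{(1)}$ and $J_{2} = R = \oplus_{s} R_{s} = \oplus_{s} \tilde{J}_{2}^{s}$. For the odd inductive step I would use the defining identity $J_{2j+1} = (V \otimes J_{2j}) \cap (J_{2j} \otimes V)$ of \eqref{eq:j_par} together with the inductive hypothesis $J_{2j} = \oplus_{s} \tilde{J}_{2j}^{s}$. Since $V$ is tensor-intersection faithful (Example \ref{example:tif}), tensoring by $V$ on either side commutes with the intersections defining $\tilde{J}_{2j}^{s}$ (Corollary \ref{corollary:interind} and Corollary \ref{corollary:maps}), so one computes $V \otimes \tilde{J}_{2j}^{s}$ and $\tilde{J}_{2j}^{s} \otimes V$ as shifted two-sided overlap spaces of $R_{s}$ and checks that their intersection telescopes to exactly $\tilde{J}_{2j+1}^{s}$. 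The cross terms $(V \otimes \tilde{J}_{2j}^{s}) \cap (\tilde{J}_{2j}^{s'} \otimes V)$ for $s \neq s'$ vanish for degree reasons, both factors living in the distinct degrees $n_{s}(2j)+1$ and $n_{s'}(2j)+1$, so the whole intersection splits as $\oplus_{s} \tilde{J}_{2j+1}^{s} = \tilde{J}_{2j+1}$, completing the odd step.

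The even step is where the real work lies and is the step I expect to be the main obstacle. Here I would start from the inclusion of $J_{2(i+1)}$ into the length-one-partition term, already observed before \eqref{eq:j_imparbisbis}, namely $J_{2(i+1)} \subseteq \big(\bigoplus_{N} \bigcap_{l=0}^{N} V^{(l)} \otimes J_{2i} \otimes V^{(N-l)}\big) \cap R^{(i+1)}$. Feeding in $J_{2i} = \oplus_{s} \tilde{J}_{2i}^{s}$ and using that the summands $V^{(l)} \otimes \tilde{J}_{2i}^{s} \otimes V^{(N-l)}$ lie, for fixed $N$, in pairwise distinct degrees $n_{s}(2i) + N$ for distinct $s$, the intersection over $l$ distributes over the direct sum in $s$, so each graded piece is forced to be a single iterated overlap of one relation space $R_{s}$; intersecting with $R^{(i+1)}$ and matching the total degree to $n_{s}(2(i+1))$ should pin it down to exactly $\tilde{J}_{2(i+1)}^{s}$, via Lemma \ref{lemma:interj}, Corollary \ref{corollary:interincl2}, and the standard fact that such an overlap lies in $R_{s}^{(i+1)} \subseteq R^{(i+1)}$. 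The reverse inclusion $\tilde{J}_{2(i+1)}^{s} \subseteq J_{2(i+1)}$ I would obtain by checking that the explicit overlap space $\tilde{J}_{2(i+1)}^{s}$ is contained in every partition term of \eqref{eq:j_impar} and in $R^{(i+1)}$, absorbing adjacent copies of $R_{s}$ into the $\tilde{J}_{2n_{k}}^{s}$ by means of Corollary \ref{corollary:2recur}. The delicate point throughout is that the apparently more permissive mixed-relation terms allowed by \eqref{eq:j_impar} must be shown to contribute nothing beyond the single-$s$ diagonal; this is precisely where the degree-disjointness of the $\tilde{J}^{s}$ and the tensor-intersection faithfulness corollaries of Subsection \ref{subsec:inter} must be combined with care.

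Finally, once $J_{i} = \tilde{J}_{i}$ is established, I would verify (b) by unwinding the bimodule differentials under $(\place) \otimes_{A} k$. In odd degree, $\delta_{2j+1}^{b}(1 | \omega | 1) = \bar{\omega}_{(1)} | \omega_{(2)} | 1 - 1 | \omega_{(1)} | \bar{\omega}_{(2)}$ becomes, after applying $(\place) \otimes_{A} k$, the map $1 \otimes \omega \mapsto \bar{\omega}_{(1)} \otimes \omega_{(2)}$, because the second summand has its rightmost factor in $V \subseteq A_{>0}$ and hence dies under the augmentation; in even degree only the $l = N$ term of $\sum_{l} \pi(\omega_{(1)}^{l}) | \omega_{(2)}^{l} | \pi(\omega_{(3)}^{l})$ survives, giving $1 \otimes \omega \mapsto \pi(\omega_{(1)}^{N}) \otimes \omega_{(2)}^{N}$. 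These are precisely the overlap differentials $\tilde{\delta}_{\bullet}$ of \cite{HeRe}, so $\delta_{\bullet} = \tilde{\delta}_{\bullet}$ and the two left complexes coincide; the right-hand case is identical after applying $k \otimes_{A} (\place)$ instead. The equivalence of the two multi-Koszul properties is then immediate from Definition \ref{definition:multikoszul}.
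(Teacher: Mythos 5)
Your proposal is correct and takes essentially the same route as the paper's own proof: induction on homological degree, with the odd step splitting $(V \otimes J_{2j}) \cap (J_{2j} \otimes V)$ over $s \in S$ by the degree-disjointness of the spaces $\tilde{J}^{s}$ (Fact \ref{fact:inter}, yielding \eqref{eq:jtildeparimpar}), the even step carried out through the two inclusions of the identity \eqref{eq:jtildeimparpar} via the tensor-intersection-faithfulness corollaries and the degree-matching argument of Corollary \ref{corollary:interind0}, and a direct unwinding of the differentials under $(\place) \otimes_{A} k$. One citational slip: in your reverse even inclusion, the absorption of adjacent copies of $R_{s}$ rests on the overlap structure of the $\tilde{J}^{s}_{2 n_{k}}$ together with Corollary \ref{corollary:interincl2} and Lemma \ref{lemma:interj} (plus the inductive identification $J_{2 n_{k}} = \tilde{J}_{2 n_{k}}$), not on Corollary \ref{corollary:2recur}, which concerns the abstract spaces $J_{i}$ rather than the single-relation overlap spaces.
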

%%%%%%%
\begin{proof}
We shall prove the statement for the left multi-Koszul complexes, the right case being analogous. 
Actually, the equivalence of both right multi-Koszul properties could also follow from the statement for the left case from Proposition \ref{proposition:leftbiright} and \cite{HeRe}, Cor. 3.13. 

Let us first note that the left multi-Koszul complex introduced in Definition \ref{definition:multikoszul} trivially coincides with the one considered in \cite{HeRe}, 
Def. 3.1, up to homological degree $2$. 
Note that $J_{2} = R = \tilde{J}_{2}$. 
They further coincide up to homological degree $3$, since in this case 
\[       J_{3} = (V \otimes R) \cap (R \otimes V) = \bigoplus_{s \in S} (V \otimes R_{s}) \cap (R_{s} \otimes V) = \bigoplus_{s \in S} \tilde{J}_{3}^{s} = \tilde{J}_{3},      \]
where we have used Fact \ref{fact:inter}, since $V$ is concentrated in degree $1$ and $R_{s}$ in degree $s$, 
and the differentials in homological degree $3$ for both complexes clearly coincide. 

Let us suppose that both complexes coincide up to homological degree $d$, \textit{i.e.} we have that $J_{i} = \tilde{J}_{i}$ and $\delta_{i} = \tilde{\delta}_{i}$ for $i \leq d$. 
 
Let us first assume that $d$ is odd. 
We claim that
\begin{equation}
\label{eq:jtildeparimpar}
      \tilde{J}_{2 j + 1} = (V \otimes \tilde{J}_{2 j}) \cap (\tilde{J}_{2 j} \otimes V),     
\end{equation}
for $j \in \NN_{0}$. 
Indeed, if $j = 0$ the previous identity is trivial, whereas for $j \in \NN$ it follows from a rather simple computation
\begin{align*}
     (V \otimes \tilde{J}_{2 j}) \cap (\tilde{J}_{2 j} \otimes V) 
     &= \Big(V \otimes \big(\bigoplus_{s\in S} \tilde{J}_{2 j}^{s}\big)\Big) \cap \Big(\big(\bigoplus_{s\in S} \tilde{J}_{2 j}^{s}\big) \otimes V\Big)
     \\
     &=\Big(\bigoplus_{s\in S} V \otimes \tilde{J}_{2 j}^{s}\Big) \cap \Big(\bigoplus_{s\in S} \tilde{J}_{2 j}^{s} \otimes V\Big)
     \\
     &= \bigoplus_{s\in S} \big((V \otimes\tilde{J}_{2 j}^{s}) \cap (\tilde{J}_{2 j}^{s} \otimes V)\big)
     \\
     &= \bigoplus_{s\in S} \tilde{J}_{2 j + 1}^{s} = \tilde{J}_{2 j +1},
\end{align*}
where we have used Fact \ref{fact:inter} in the antepenultimate equality, taking into account that $V$ is concentrated in degree $1$ 
and each $\tilde{J}_{2 j}^{s}$ is concentrated in degree $n_{s}(2 j) = s j$. 
Now, \eqref{eq:j_par} and \eqref{eq:jtildeparimpar} tell us that $J_{d+1} = \tilde{J}_{d+1}$. 
It is trivial to verify that the differentials also satisfy the identity $\delta_{d+1} = \tilde{\delta}_{d+1}$. 

On the other hand, let us suppose that $d$ is even. 
In this case we claim that 
\begin{equation}
\label{eq:jtildeimparpar}
      \tilde{J}_{2 j} = \big(\bigoplus_{N \in \NN}  \bigcap_{\begin{scriptsize}\begin{matrix}\bar{n} \in \mathrm{Par}(j-1) \\ \bar{m} \in \mathrm{Par}_{j}(N) \end{matrix}\end{scriptsize}} 
V^{(m_{1})} \otimes \tilde{J}_{2 n_{1}} \otimes \dots \otimes V^{(m_{j-1})} \otimes \tilde{J}_{2 n_{j-1}} \otimes V^{(m_{j})}  \big) \cap R^{(j)},     
\end{equation}
for all $j \in \NN$, which we prove as follows. 
It is trivially verified for $j = 1$, since the right term in that case is just $T(V)_{>0} \cap R$, which coincides with $\tilde{J}_{2} = R$. 

Let us consider $j \geq 2$. 
We first show that the right member of \eqref{eq:jtildeimparpar} contains the left one. 
Indeed, note that 
\[    V^{(m_{1})} \otimes \tilde{J}_{2 n_{1}} \otimes \dots \otimes V^{(m_{j-1})} \otimes \tilde{J}_{2 n_{j-1}} \otimes V^{(m_{j})}     \]
trivially includes
\[    V^{(m_{1})} \otimes \tilde{J}^{s}_{2 n_{1}} \otimes \dots \otimes V^{(m_{j-1})} \otimes \tilde{J}^{s}_{2 n_{j-1}} \otimes V^{(m_{j})},     \]
for all $s \in S$, so the right member of \eqref{eq:jtildeimparpar} includes 
\[    \big(\bigoplus_{N \in \NN}  \bigcap_{\begin{scriptsize}\begin{matrix}\bar{n} \in \mathrm{Par}(j-1) \\ \bar{m} \in \mathrm{Par}_{j}(N) \end{matrix}\end{scriptsize}} 
V^{(m_{1})} \otimes \tilde{J}^{s}_{2 n_{1}} \otimes \dots \otimes V^{(m_{j-1})} \otimes \tilde{J}^{s}_{2 n_{j-1}} \otimes V^{(m_{j})}  \big) \cap R^{(j)},     
\]
for all $s \in S$, which trivially coincides with 
\[    \big(\bigoplus_{N \in \NN_{> s(j-2)}} \bigcap_{l=0}^{N} 
V^{(l)} \otimes R_{s} \otimes V^{(N-l)} \big) \cap R^{(j)},    \]
for all $s \in S$. 
By Fact \ref{fact:inter}, the latter space is given by 
\[    \bigoplus_{N \in \NN_{> s(j-2)}} \big(\bigcap_{l=0}^{N} 
V^{(l)} \otimes R_{s} \otimes V^{(N-l)} \cap R^{(j)}_{N+s}\big),    \]
for all $s \in S$. 

On the other hand, note that $(R^{(j)})_{n}$ is the direct sum of the independent vector subspaces $\{R_{s_{1}} \otimes \dots \otimes R_{s_{j}} \}_{s_{1} + \dots + s_{j} = n}$. 
This follows from a simple recursive argument using Corollary \ref{corollary:interind0}. 
In particular, $\omega \in (R^{(j)})_{n}$ has to be written in a unique manner as a sum of unique elements 
$\omega_{s_{1}, \dots, s_{j}} \in R_{s_{1}} \otimes \dots \otimes R_{s_{j}}$, where we assume that $s_{1} + \dots + s_{j} = n$.  
It is thus trivial that $\cap_{l=0}^{N} 
V^{(l)} \otimes R_{s} \otimes V^{(N-l)}$ is included in $R^{(j)}_{N+s}$ if $N = s (j-1)$. 
Moreover, another application of Corollary \ref{corollary:interind} tells us that the intersection of them vanishes otherwise. 
This in turn implies that 
\[    \bigoplus_{N \in \NN_{>s(j-2)}} \big(\bigcap_{l=0}^{N} 
V^{(l)} \otimes R_{s} \otimes V^{(N-l)} \cap R^{(j)}_{N+s}\big),    \]
coincides with $\tilde{J}^{s}_{2 j}$, for all $s \in S$, and we then conclude that the right member of \eqref{eq:jtildeimparpar} contains the left one.

We will now prove that the left member of \eqref{eq:jtildeimparpar} contains the right one. 
In order to do so, note first that the latter member is trivially 
included in
\begin{align*}
    \big(\bigoplus_{N \in \NN} \bigcap_{l=0}^{N} V^{(l)} \otimes &\tilde{J}_{2 j - 1} \otimes V^{(N-l)}\big) \cap R^{(j)} 
     \\
     &= \big(\bigoplus_{N \in \NN} \bigcap_{l=0}^{N} \bigoplus_{s \in S} V^{(l)} \otimes \tilde{J}_{2 j - 1}^{s} \otimes V^{(N-l)}\big) \cap R^{(j)}
     \\
     &= \big(\bigoplus_{N \in \NN} \bigoplus_{s \in S} \bigcap_{l=0}^{N} V^{(l)} \otimes \tilde{J}_{2 j - 1}^{s} \otimes V^{(N-l)}\big) \cap R^{(j)}   
     \\
     &= \big(\bigoplus_{N \in \NN} \bigoplus_{s \in S} \bigcap_{l=0}^{N+n_{s}(2 j -3)} V^{(l)} \otimes R_{s} \otimes V^{(N+n_{s}(2 j -3)-l)}\big) \cap R^{(j)},
\end{align*}
where we used Fact \ref{fact:inter} in the second equality, for $V$ is concentrated in degree $1$ 
and $\tilde{J}_{2 j - 1}^{s}$ is concentrated in degree $n_{s}(2 j -1) = s j - s + 1$. 
Using Fact \ref{fact:inter} once more in the last member, we get that the latter should be the direct sum of the intersection of the $n$-th homogeneous components of each corresponding term, 
\textit{i.e.} the intersection of the $n$-th direct summand of $R^{(j)}$ and 
\begin{equation}
\label{eq:intern}
     \bigoplus_{s \in S_{j,n}} \bigcap_{l=0}^{n-s} V^{(l)} \otimes R_{s} \otimes V^{(n-s-l)} 
%      = \bigoplus_{s \in S_{\leq n}} \bigcap_{l=0}^{n-s} V^{(l)} \otimes R_{s} \otimes V^{(n-s-l)},     
\end{equation}
where $S_{j,n} = \{ s \in S : n_{s}(2 j -1) < n \}$. 
Note that if $s \in S_{j,n}$, then $n > s$, for $j \geq 2$. 
As explained in the previous paragraph, a direct application of Corollary \ref{corollary:interind0} tells us that the former intersection should coincide with 
\[     \bigoplus_{s \in S} \big(\bigcap_{l=0}^{n-s} (V^{(l)} \otimes R_{s} \otimes V^{(n-s-l)}) \cap (R_{s}^{(j)})_{n}\big),     \] 
which is directly seen to be equal to  
\[       \bigoplus_{s \in S} \big(\bigcap_{l=0}^{n-s} (V^{(l)} \otimes R_{s} \otimes V^{(n-s-l)}) \cap (R_{s}^{(j)})_{n}\big) 
          = \bigoplus_{s \in S} (\tilde{J}_{2 j}^{s})_{n} = (\tilde{J}_{2 j})_{n}.     \]
This proves that the left member of \eqref{eq:jtildeimparpar} contains the right one, and so the equality of the assertion \eqref{eq:jtildeimparpar} holds.

Finally, \eqref{eq:j_imparbis} and \eqref{eq:jtildeimparpar} imply that $J_{d+1} = \tilde{J}_{d+1}$. 
It is also direct to check that the differentials $\delta_{d+1}$ and $\tilde{\delta}_{d+1}$ coincide. 
The proposition is thus proved. 
\end{proof}

%%%%%%%
\begin{remark}
Since the previous definition of left or right multi-Koszul property coincides with the one considered in \cite{HeRe} if the algebras satisfy the assumptions of the latter article, 
by \cite{HeRe}, Rmk. 3.4, we get that it also coincides with the corresponding one given in \cite{B2}, Def. 2.10 (or \cite{BDW}, Section 5), if the algebra is homogeneous. 
Thus, a homogeneous algebra is left (resp., right) multi-Koszul if and only if it is generalized left (resp., right) Koszul. 
\end{remark}
%%%%%%%

%%%%%%%
\begin{example}
\label{example:sym} 
We will provide a collection of examples of multi-Koszul algebras, 
which was one of the main motivations of this article. 
The space of generators $V$ of these graded algebras does not lie in degree $1$, 
so they cannot be considered as multi-Koszul algebras for the definition given in \cite{HeRe} in any natural manner. 

Given two nonnegative integers $n, s \in \NN_{0}^{2} \setminus \{ (0,0) \}$, and a collection of symmetric $(s \times s)$-matrices $(\Gamma_{a,b}^{i})$, 
for $i = 1, \dots, n$ ($a,b = 1, \dots, s$), 
the (associative) \emph{super Yang-Mills algebra} $\mathrm{YM}(n,s)^{\Gamma}$ over an algebraically closed field $k$ of characteristic zero is defined as follows. 
Take $V = V_{2} \oplus V_{3}$ be a graded vector space over $k$, where $\dim_{k}(V_{2}) = n$ and $\dim_{k}(V_{3}) = s$, 
and choose in fact a (homogeneous) basis $\mathcal{B} = \mathcal{B}_{2} \cup \mathcal{B}_{3}$ of $V$, where $\mathcal{B}_{2} = \{ x_{1}, \dots, x_{n} \}$ 
and $\mathcal{B}_{3} =\{ z_{1}, \dots, z_{s} \}$, with $|x_{i}| = 2$, for all $i = 1, \dots, n$, and $|z_{a}| = 3$, for all $a = 1, \dots, s$, 
We suppose further that the matrices $(\Gamma^{i}_{a,b})$ satisfy the nondegeneracy assumption explained in the third paragraph before Rmk. 1 of \cite{Her10}. 

The graded algebra $\mathrm{YM}(n,s)^{\Gamma}$ is given by the quotient of the graded free algebra $T(V)$, 
by the homogeneous relations given by  
\begin{align*}
 r_{0,i} &= \sum_{j=1}^{n} [x_{j},[x_{j},x_{i}]] - \frac{1}{2} \sum_{a,b = 1}^{s} \Gamma^{i}_{a,b} [z_{a},z_{b}], 
 \\
 r_{1,a} &= \sum_{i=1}^{n} \sum_{b = 1}^{s} \Gamma_{a,b}^{i} [x_{i},z_{b}],  
\end{align*} 
for $i = 1, \dots, n$ and $a = 1, \dots, s$, respectively. 
The bracket $[, \hskip 0.6mm ]$ denotes the graded commutator, \textit{i.e.} $[a,b] = a b - (-1)^{|a| |b|} b a$, for $a, b \in \mathcal{B}$. 
They have been previously considered by M. Movshev and A. Schwarz in \cite{MS06} (see also the preprint article \cite{Mov} by M. Movshev). 

Using the explicit description of the minimal projective resolution of the trivial module $k$ over the super Yang-Mills algebra $\mathrm{YM}(n,s)^{\Gamma}$ 
(for $(n,s) \neq (1,0), (1,1)$) given in \cite{Her10}, Prop. 2, and Corollary \ref{corollary:consezero} given below, we see that these graded algebras are multi-Koszul. 
\end{example}
%%%%%%%

%%%%%%%%%%%%%%%%%%%%%%%%%%%%%%%%%%%%%%%%%%%%%%%%%%%%%%%%%%%%%%%%%%%%%%%%%%%%%%%%%%%%%%%%%%%%%%%%%%%%%%%%%%%%%%%%%%%%%%%%%%%%%%%%%%%%%%%%%%%%%%%%%%%%%%%%%%%%%%%
\subsection{\texorpdfstring{An equivalent description of multi-Koszul algebras}{An equivalent description of multi-Koszul algebras}}
\label{subsec:multikoszul}

We would like to make some comments on the left multi-Koszul complex of $A$. 
The obvious statements for the right multi-Koszul complex trivially hold. 
First, given $i \in \NN_{0}$, note that the map of graded vector spaces $J_{i+1} \rightarrow \Ker(\delta_{i})$ given by the restriction of $\delta_{i+1}$ is injective. 
This can be proved as follows. 
The cases $i =0, 1$ are immediate, so we will suppose that $i \geq 2$.  
Furthermore, the kernel of the restriction of $\delta_{i+1}$ to $J_{i+1}$ is easily seen to be $J_{i+1} \cap (I \otimes J_{i})$. 
The first term of this intersection is included in $R \otimes J_{i-1}$, by Lemma \ref{lemma:2recur}, whereas the second is included in $I \otimes V \otimes J_{i-1}$ if $i$ is odd, by \eqref{eq:j_par}, 
and it is included in $I \otimes T(V)_{>0} \otimes J_{i-1}$ if $i$ is even, by \eqref{eq:j_imparbisbis}. 
In both situations we have thus that $I \otimes J_{i} \subseteq I \otimes T(V)_{>0} \otimes J_{i-1}$. 
Then, the kernel of the restriction of $\delta_{i+1}$ to $J_{i+1}$ is contained in the intersection 
\[
     (R \otimes J_{i-1}) \cap (I \otimes T(V)_{>0} \otimes J_{i-1}) = (R \cap (I \otimes T(V)_{>0})) \otimes J_{i-1},          
\]
where we have used Proposition \ref{proposition:imp} and Lemma \ref{lemma:interj}. 
By the defining property \eqref{eq:sprel} of the space of relations the last space vanishes, and we thus get that the restriction of $\delta_{i+1}$ to $J_{i+1}$ is injective. 
This proves the claim. 

For each $i \in \NN_{0}$, let us now consider the map $J_{i+1} \rightarrow k \otimes_{A} \Ker(\delta_{i})$ given by the composition of $J_{i+1} \rightarrow \Ker(\delta_{i})$ 
and the canonical projection $\Ker(\delta_{i}) \rightarrow k \otimes_{A} \Ker(\delta_{i})$. 
We claim that this composition is in fact injective if $i$ is even. 
This can be proved as follows. 
By the comments in the last paragraph of Section \ref{sec:homo}, we know that the mentioned map is in fact an isomorphism for $i = 0$ (and also for $i = 1$). 
We shall suppose thus that $i \geq 2$. 
Since $i$ is even, the image of the map $J_{i+1} \rightarrow \Ker(\delta_{i})$ is contained in $V \otimes J_{i}$, 
so one sees that the kernel of $J_{i+1} \rightarrow k \otimes_{A} \Ker(\delta_{i})$ 
vanishes if and only if $J_{i} \cap \Ker(\delta_{i}) = 0$, which follows from the previous paragraph. 

We have thus the following result. 
%%%%%%%
\begin{lemma}
\label{lemma:imp}
Let $A = T(V)/\cl{R}$ be a locally finite dimensional nonnegatively graded algebra and let $(K(A)_{\bullet},\delta_{\bullet})$ be its left multi-Koszul complex. 
Given $i \in \NN_{0}$, the map of graded vector spaces $J_{i+1} \rightarrow \Ker(\delta_{i})$ given by the restriction of $\delta_{i+1}$ is injective. 
Consider now the map of graded vector space given by the composition of the previous morphism and the canonical projection $\Ker(\delta_{i}) \rightarrow k \otimes_{A} \Ker(\delta_{i})$. 
If $i$ is even or $i = 1$, it is injective. 
If $i$ is odd and $i \geq 3$, we note that the restriction to the $n$-th homogeneous components of the previous composition map is injective 
if there are no nontrivial homogeneous components of $(\Ker(\delta_{i}))_{m}$ for $m < n$.
\end{lemma}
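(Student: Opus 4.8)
The plan is to prove the three assertions in turn, leaning on the two paragraphs immediately preceding the statement for the injectivity of the first map and for the even (and low-degree) cases, and supplying a short grading argument for the remaining odd case. For the first map $J_{i+1} \to \Ker(\delta_i)$, I would follow the preceding discussion: the cases $i = 0,1$ are immediate, and for $i \geq 2$ one identifies the kernel of $\delta_{i+1}|_{J_{i+1}}$ with $J_{i+1} \cap (I \otimes J_i)$. Using Lemma \ref{lemma:2recur} to place the first factor of this intersection inside $R \otimes J_{i-1}$, and \eqref{eq:j_par} together with \eqref{eq:j_imparbisbis} to place the second inside $I \otimes T(V)_{>0} \otimes J_{i-1}$, Proposition \ref{proposition:imp} and Lemma \ref{lemma:interj} reduce the intersection to $(R \cap (I \otimes T(V)_{>0})) \otimes J_{i-1}$, which vanishes by the defining property \eqref{eq:sprel} of a space of relations. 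Since $\delta_{i+1}$ preserves the Adams grading, this first map is then injective on every homogeneous component.

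For the composition with the projection $\Ker(\delta_i) \to k \otimes_A \Ker(\delta_i) = \Ker(\delta_i)/(A_{>0} \cdot \Ker(\delta_i))$, I would recall that for $i = 0,1$ the map is the isomorphism noted at the end of Section \ref{sec:homo}, and that for $i$ even the image of $J_{i+1}$ lies in $V \otimes J_i$, so that the vanishing of the kernel is equivalent to $J_i \cap \Ker(\delta_i) = 0$, which is precisely the injectivity of $\delta_i|_{J_i}$ obtained above. This disposes of the even and the $i = 1$ cases globally.

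For $i$ odd with $i \geq 3$ the argument is purely one of degrees. Fix $n$ with $(\Ker(\delta_i))_m = 0$ for all $m < n$. Because $A_{>0}$ raises the Adams degree by at least one, the degree-$n$ component of $A_{>0} \cdot \Ker(\delta_i)$ equals $\sum_{p \geq 1} A_p \cdot (\Ker(\delta_i))_{n-p}$, and every summand vanishes by hypothesis; hence $(A_{>0} \cdot \Ker(\delta_i))_n = 0$ and the projection $(\Ker(\delta_i))_n \to (k \otimes_A \Ker(\delta_i))_n$ is injective. Composing with the injective graded map $(J_{i+1})_n \to (\Ker(\delta_i))_n$ from the first part yields the claimed injectivity in degree $n$.

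The conceptual point, and the only place where care is genuinely needed, is the asymmetry between the even and odd cases, which explains why the odd statement is only degree-local. When $i$ is even the image of $\delta_{i+1}$ has leading tensor factor in $V$, a single generator, and this is what permits the global reduction to $J_i \cap \Ker(\delta_i) = 0$. When $i$ is odd, the even differential $\delta_{i+1}$ (after tensoring with $k$) retains only the top term, whose leading factor lies in $\pi(V^{(N)}) \subseteq (A_{>0})^2$ with $N \geq 2$ by \eqref{eq:j_imparbisbis}; such a factor can be absorbed into $A_{>0} \cdot \Ker(\delta_i)$ in positive degrees, so no global statement can be expected and one must restrict to the bottom degree. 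The main obstacle is therefore the correct degree bookkeeping showing that $A_{>0} \cdot \Ker(\delta_i)$ contributes nothing in degree $n$, which is exactly where the hypothesis on the vanishing of the lower components $(\Ker(\delta_i))_m$ is consumed.
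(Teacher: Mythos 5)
Your proposal is correct and follows essentially the same route as the paper: the identification of $\ker(\delta_{i+1}|_{J_{i+1}})$ with $J_{i+1}\cap(I\otimes J_{i})$ and its reduction via Lemma \ref{lemma:2recur}, \eqref{eq:j_par}, \eqref{eq:j_imparbisbis}, Proposition \ref{proposition:imp}, Lemma \ref{lemma:interj} and \eqref{eq:sprel}, followed by the even-case reduction to $J_{i}\cap\Ker(\delta_{i})=0$ via the image lying in $V\otimes J_{i}$, is exactly the argument in the two paragraphs preceding the statement. Your explicit degree bookkeeping for odd $i\geq 3$, showing $(A_{>0}\cdot\Ker(\delta_{i}))_{n}=0$ under the vanishing hypothesis, correctly fills in the step the paper leaves as a remark, and your closing observation about the leading factor $\pi(V^{(N)})$ with $N\geq 2$ (from \eqref{eq:j_imparbisbis}) accurately explains why the odd case is only degree-local.
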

%%%%%%%

The corresponding formulation of the lemma for the right multi-Koszul complex of $A$ is obvious, and we shall refer to the lemma whether we are considering the left or the right version. 
We may in fact prove one of the main result of this section (\textit{cf.} \cite{HeRe}, Prop. 3.12):
%%%%%%%
\begin{proposition}
\label{proposition:tresimp}
Let $A = T(V)/\cl{R}$ be a locally finite dimensional nonnegatively graded algebra. 
Then $A$ is left (resp., right) multi-Koszul if and only if there is an isomorphism of graded vector spaces 
$\Tor_{i}^{A}(k,k) \simeq J_{i}$, for all $i \in \NN_{0}$.
\end{proposition}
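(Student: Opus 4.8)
The plan is to prove both implications by comparing the left multi-Koszul complex $(K(A)_{\bullet},\delta_{\bullet})$, which has $K(A)_{i}=A\otimes J_{i}$, with the minimal graded projective resolution of the trivial module $k$. By Proposition \ref{proposition:leftbiright} it suffices to treat the left case, and since $\Tor_{\bullet}^{A}(k,k)$ is computed equally from either side the right statement follows formally.

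For the forward implication, suppose $A$ is left multi-Koszul. Then, as observed in the remark following Definition \ref{definition:multikoszul}, $(K(A)_{\bullet},\delta_{\bullet})$ is a \emph{minimal} graded projective resolution of $k$, the induced differential on $k\otimes_{A}K(A)_{\bullet}$ vanishing by \eqref{eq:j_par} and \eqref{eq:j_imparbisbis}. Hence $\Tor_{i}^{A}(k,k)=H_{i}(k\otimes_{A}K(A)_{\bullet})=k\otimes_{A}(A\otimes J_{i})\simeq J_{i}$ as graded vector spaces, for every $i\in\NN_{0}$.

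For the converse, assume $\Tor_{i}^{A}(k,k)\simeq J_{i}$ for all $i$, and show that $(K(A)_{\bullet},\delta_{\bullet})$ is acyclic in positive degrees by induction on the homological degree. The inductive statement I would carry is: the augmented complex is exact at degrees $0,\dots,i-1$ and each $\delta_{j}$ (for $1\le j\le i$) induces an isomorphism $J_{j}\xrightarrow{\sim}k\otimes_{A}\Ker(\delta_{j-1})$, equivalently $\delta_{j}$ is an essential surjection onto $\Ker(\delta_{j-1})$, i.e.\ a projective cover, by Lemma \ref{lem:biyectiontensor}. The base case $i=2$ holds since the multi-Koszul complex coincides with the minimal resolution up to homological degree $2$. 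Under this hypothesis $\Ker(\delta_{i})$ is the $(i+1)$-th syzygy of $k$, so $k\otimes_{A}\Ker(\delta_{i})\simeq\Tor_{i+1}^{A}(k,k)\simeq J_{i+1}$ and thus $\dim_{k}(J_{i+1})_{n}=\dim_{k}(k\otimes_{A}\Ker(\delta_{i}))_{n}$ in every Adams degree $n$. To complete the step it is enough to show that the map $f\colon J_{i+1}\to k\otimes_{A}\Ker(\delta_{i})$ of Lemma \ref{lemma:imp} is injective, for then the matching dimensions force it to be an isomorphism, which makes $\delta_{i+1}\colon A\otimes J_{i+1}\to\Ker(\delta_{i})$ a projective cover; this gives both exactness at degree $i$ and the minimality needed to continue.

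When $i$ is even, Lemma \ref{lemma:imp} supplies the injectivity of $f$ outright, so the even steps are immediate. The main obstacle is the odd case $i\ge 3$, where Lemma \ref{lemma:imp} only guarantees injectivity of $f$ on the $n$-th homogeneous component \emph{under} the assumption that $\Ker(\delta_{i})$ vanishes in all degrees $m<n$. I would resolve this by a secondary induction on the Adams degree $n$. Since $k\otimes_{A}\Ker(\delta_{i})\simeq J_{i+1}$ and $A$ is connected, the spaces $\Ker(\delta_{i})$ and $J_{i+1}$ share the same minimal degree $n_{0}$; at $n=n_{0}$ the hypothesis of Lemma \ref{lemma:imp} is vacuous, so $f_{n_{0}}$ is injective and hence bijective. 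For $n>n_{0}$ one uses the exactness already established in Adams degrees $<n$ to express $\big(A_{>0}\cdot\Ker(\delta_{i})\big)_{n}$ through $\delta_{i+1}$ applied to strictly lower-degree elements of $J_{i+1}$, thereby reducing the injectivity of $f_{n}$ to cases already handled. Controlling this coupling between the homological and the Adams gradings is the delicate point of the proof; everything else amounts to bookkeeping with the tensor-intersection identities of Subsection \ref{subsec:inter} and the projective-cover machinery recalled in Section \ref{sec:homo}.
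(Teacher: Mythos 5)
Your proposal is correct and follows essentially the same route as the paper's own proof: minimality of the multi-Koszul complex gives the forward implication, and the converse is an induction on homological degree in which the even steps are settled outright by Lemma \ref{lemma:imp} together with a degreewise dimension count, while the odd steps require a secondary induction on the Adams degree starting at the common minimal degree $n_{0}$ of $J_{i+1}$ and $\Ker(\delta_{i})$. The only point you leave compressed is the closing of the odd inductive step, which in the paper is not a reduction to lower degrees but the direct observation that a failure of injectivity of $(f_{i+1})_{m+1}$ would produce a nonzero element of $(J_{i+1})_{m+1}\cap\big(T(V)_{>0}\otimes J_{i+1}\big)_{m+1}\subseteq R^{((i+1)/2)}\cap\big(T(V)_{>0}\otimes R^{((i+1)/2)}\big)=0$, contradicting the defining property \eqref{eq:sprel} of the space of relations --- precisely the ``tensor-intersection'' vanishing your last sentence points to.
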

%%%%%%%
\begin{proof}
We shall prove the statement for the left multi-Koszul property, since the right one is analogous. 
Moreover, we will only show the ``if'' part, since the converse follows immediately from the minimality of the (left) multi-Koszul complex. 

Assume the existence of the isomorphism of graded vector spaces in the statement. 
We will prove that the left multi-Koszul complex is in fact a minimal projective resolution of the trivial left $A$-module $k$. 
In fact, we will show that $K(A)_{\bullet}$ is a minimal projective resolution of $k$ up to homological degree $i$, for all $i \in \NN$. 
Since the former coincides with such a minimal projective resolution up to homological degree $2$, we suppose that the statement is true for $i \geq 2$. 
By the comments on the construction of projective covers in Section \ref{sec:homo}, and the assumption $\Tor_{i+1}^{A}(k,k) \simeq J_{i+1}$, 
there is an isomorphism of graded vector spaces $\bar{h}_{i+1} : J_{i+1} \rightarrow k \otimes_{A} \Ker(\delta_{i})$. 

If $i \geq 2$ and $i$ is even, the previous lemma tells us that the composition
\begin{equation}
\label{eq:deltaibar}
J_{i+1} \hookrightarrow \Ker(\delta_{i})\twoheadrightarrow k \otimes_{A} \Ker(\delta_{i}),
\end{equation}
where the first map is the restriction of $\delta_{i+1}$, is injective.
Hence, the composition of this map with the inverse of $\bar{h}_{i+1}$ is an injective endomorphism of graded vector spaces of $J_{i+1}$, 
so an isomorphism, since the latter is a locally finite dimensional graded vector space. 
Hence, the map \eqref{eq:deltaibar} is also an isomorphism, so $\delta_{i+1}$ is in fact a projective cover of $\Ker(\delta_{i})$. 

We now assume that $i \geq 2$ and $i$ is odd. 
We consider as before the map of graded vector spaces given by \eqref{eq:deltaibar}, which we denote by $f_{i+1}$. 
Let us denote by $(f_{i+1})_{n} : (J_{i+1})_{n} \rightarrow (k \otimes_{A} \Ker(\delta_{i}))_{n}$ the restriction to the $n$-th homogeneous components, and the same for $(\bar{h}_{i+1})_{n}$. 
We shall prove that $(f_{i+1})_{n}$ is an isomorphism for all $n \in \NN_{0}$ by induction on $n$. 
Note that, by the isomorphism $\Tor_{i+1}^{A}(k,k) \simeq J_{i+1}$ of graded vector spaces, it suffices to prove that $(f_{i+1})_{n}$ is injective, for an injective map between finite dimensional vector spaces is automatically an isomorphism, since the corresponding $n$-th homogeneous components are finite dimensional. 
Let $n_{\mathrm{min}} \in \NN$ be the first positive integer such that $(J_{i+1})_{n_{\mathrm{min}}} \neq 0$. 
Note that this in particular implies that $(f_{i+1})_{n}$ is injective for $n < n_{\mathrm{min}}$. 
The assumption $\Tor_{i+1}^{A}(k,k) \simeq J_{i+1}$ implies that $\Ker(\delta_{i})$ is concentrated in degrees greater that or equal to $n_{\mathrm{min}}$. 
This in turn implies that $(f_{i+1})_{n_{\mathrm{min}}}$ is injective by Lemma \ref{lemma:imp}. 
Let us thus assume that $(f_{i+1})_{n}$ is injective for $n \leq m$, for some $m \geq n_{\mathrm{min}}$,  
so, again by the hypothesis $\Tor_{i+1}^{A}(k,k) \simeq J_{i+1}$, they should be in fact isomorphisms, since the corresponding $n$-th homogeneous components are finite dimensional. 
We shall prove that $(f_{i+1})_{m+1}$ is also injective. 
If this is not the case, by the definition of the map $(f_{i+1})_{m+1}$ and the inductive assumption, we must have that the intersection $(J_{i+1})_{m+1} \cap (T(V)_{>0} \otimes J_{i+1})_{m+1}$ is nontrivial. 
However, since $J_{i+1} \subseteq R^{(i+1)/2}$, the previous intersection is included in $R^{(i+1)/2} \cap (T(V)_{>0} \otimes R^{(i+1)/2})$, which vanishes by the 
defining property \eqref{eq:sprel} of the space of relations. 
The proposition is thus proved. 
\end{proof}

We now have the following immediate consequence of Proposition \ref{proposition:tresimp} and the isomorphism \eqref{eq:isotorext}.
%%%%%%%
\begin{proposition}
\label{proposition:tresimpext}
For a locally finite dimensional nonnegatively graded algebra connected $A$ with space of relations $R$, 
the following statements are equivalent:
\begin{itemize}
\item[\textit{(i)}] $A$ is multi-Koszul,
\item[\textit{(i)}] $\mathcal{E}xt_{A}^{i} (k,k) \simeq J_{i}^{\#}$, for all $i \in \NN_{0}$, where $(\place)^{\#}$ denotes the graded dual of a graded vector space.
\end{itemize}
\end{proposition}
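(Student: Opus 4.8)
The plan is to deduce the statement directly from Proposition \ref{proposition:tresimp} together with the duality isomorphism \eqref{eq:isotorext}, so that no new homological input is required. First I would invoke Proposition \ref{proposition:tresimp}, which asserts that $A$ is multi-Koszul if and only if $\Tor_{i}^{A}(k,k) \simeq J_{i}$ as graded vector spaces for every $i \in \NN_{0}$. This reduces the problem to transferring such a family of isomorphisms from the $\Tor$ side to the $\mathcal{E}xt$ side.

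Next I would apply the canonical isomorphism \eqref{eq:isotorext} to the bounded below graded left $A$-module $M = k$. Since the trivial module $k$ is concentrated in a single degree, it is certainly bounded below, so the hypotheses of \eqref{eq:isotorext} are met and we obtain a canonical isomorphism of graded vector spaces $\mathcal{E}xt_{A}^{i}(k,k) \simeq \Tor_{i}^{A}(k,k)^{\#}$ for all $i \in \NN_{0}$.

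The remaining step is a purely formal manipulation of graded duals. Because $A$ is locally finite dimensional, the spaces $J_{i} \subseteq T(V)$ (built from $V$ and $R$ by intersections and tensor products) and the groups $\Tor_{i}^{A}(k,k)$ are all locally finite dimensional graded vector spaces; on such spaces the graded dual $(\place)^{\#}$ is an involution, in the sense that $W^{\#\#} \simeq W$ canonically, and it carries isomorphisms to isomorphisms. Hence $\Tor_{i}^{A}(k,k) \simeq J_{i}$ holds for all $i$ if and only if $\Tor_{i}^{A}(k,k)^{\#} \simeq J_{i}^{\#}$ holds for all $i$: the forward direction is just applying $(\place)^{\#}$, and the reverse direction follows by dualizing once more and using $\Tor_{i}^{A}(k,k) \simeq \Tor_{i}^{A}(k,k)^{\#\#}$ and $J_{i}^{\#\#} \simeq J_{i}$. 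Combining the three steps, $A$ is multi-Koszul if and only if $\mathcal{E}xt_{A}^{i}(k,k) \simeq J_{i}^{\#}$ for all $i \in \NN_{0}$, which is exactly the claimed equivalence.

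I do not expect a genuine obstacle here, as the result is a formal corollary of the two cited facts; the only point deserving care is to record that the local finiteness of $A$ (hence of $\Tor_{i}^{A}(k,k)$ and of $J_{i}$) is precisely what makes the passage to graded duals reversible, so that one obtains the full equivalence rather than merely a one-sided implication.
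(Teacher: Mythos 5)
Your proposal is correct and follows exactly the paper's route: the paper derives Proposition \ref{proposition:tresimpext} as an immediate consequence of Proposition \ref{proposition:tresimp} together with the duality isomorphism \eqref{eq:isotorext} applied to $M = k$. Your additional remark that local finite dimensionality of $\Tor_{i}^{A}(k,k)$ and of $J_{i}$ is what makes the graded dual an involution, and hence the equivalence genuinely two-sided, is a sound and worthwhile point of care that the paper leaves implicit.
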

%%%%%%%

%%%%%%%
\begin{remark}
\label{remark:extext}
Suppose further that $A$ is a finitely generated multi-Koszul algebra with a finite dimensional space of relations. 
Then, the multi-Koszul resolution for $A$ is composed of finitely generated projective $A$-modules, for each vector space $J_{i}$ is finite dimensional, 
so, by the comments in the penultimate paragraph of Section \ref{sec:homo}, there is a canonical identification $\mathcal{E}xt_{A}^{\bullet} (k,k) \simeq \Ext_{A}^{\bullet} (k,k)$. 
\end{remark}
%%%%%%%

We may also mention an easy corollary of the previous Lemma. 
%%%%%%%
\begin{corollary}
\label{corollary:consezero}
Let $A = T(V)/\cl{R}$ be a locally finite dimensional nonnegatively connected graded algebra. 
Suppose that its left multi-Koszul complex $(K(A)_{\bullet},\delta_{\bullet})$ is exact in homological degrees $\bullet = 1, \dots, N-1$, for $N \in \NN_{\geq 2}$, and that $\delta_{N}$ 
is injective. 
Then $A$ is left multi-Koszul. 
\end{corollary}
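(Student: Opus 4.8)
The plan is to show that, under the stated hypotheses, the augmented left multi-Koszul complex $K(A)_{\bullet}$ is exact in every positive homological degree, which by the remark following Definition \ref{definition:multikoszul} is precisely the statement that $A$ is left multi-Koszul. The essential observation is that the injectivity of $\delta_{N}$ forces every space $J_{i}$ with $i > N$ to vanish, after which the required exactness becomes almost automatic.

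The main tool is Lemma \ref{lemma:imp}, which provides, for each $i \in \NN_{0}$, an injection of graded vector spaces $J_{i+1} \hookrightarrow \Ker(\delta_{i})$ given by the restriction of $\delta_{i+1}$. Since $\delta_{N}$ is assumed injective we have $\Ker(\delta_{N}) = 0$, and therefore the injection $J_{N+1} \hookrightarrow \Ker(\delta_{N}) = 0$ yields $J_{N+1} = 0$. First I would record this base case, and then promote it to an induction establishing $J_{i} = 0$ for all $i > N$. Indeed, suppose $J_{i} = 0$ for some $i \geq N+1$; then the module $K(A)_{i} = A \otimes J_{i}$ vanishes, so $\delta_{i}$ is the zero map out of the zero module and in particular $\Ker(\delta_{i}) = 0$. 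Applying Lemma \ref{lemma:imp} once more gives $J_{i+1} \hookrightarrow \Ker(\delta_{i}) = 0$, whence $J_{i+1} = 0$. Starting from $J_{N+1} = 0$, this shows $K(A)_{i} = 0$ for all $i > N$.

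Finally I would assemble the exactness. By the previous step the complex has the finite shape $0 \to A \otimes J_{N} \to A \otimes J_{N-1} \to \dots \to A \otimes J_{1} \to A \to k \to 0$, all higher terms being zero. Exactness in homological degrees $1, \dots, N-1$ holds by hypothesis. In homological degree $N$ we have $\im(\delta_{N+1}) = 0$ because $K(A)_{N+1} = 0$, and $\Ker(\delta_{N}) = 0$ by assumption, so the complex is exact there as well; and for degrees strictly greater than $N$ all the terms vanish, so exactness is trivial. Hence $K(A)_{\bullet}$ is exact in all positive homological degrees, which is exactly the assertion that $A$ is left multi-Koszul.

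The only delicate point in the whole argument is the propagation of vanishing across both parities in the inductive step. One might be tempted to use instead the inclusions $J_{i} \subseteq R \otimes J_{i-2}$ from Lemma \ref{lemma:2recur}, but these only propagate vanishing in steps of two and so do not by themselves settle both the even and the odd indices beyond $N$; routing the induction through the vanishing of $\Ker(\delta_{i})$, which turns each vanishing $J_{i}$ into a vanishing source module, resolves this cleanly and is the crux of the proof.
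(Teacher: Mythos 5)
Your proof is correct, and it shares its crucial first step with the paper's: both deduce $J_{N+1}=0$ from Lemma \ref{lemma:imp} together with $\Ker(\delta_{N})=0$. Where you diverge is in propagating the vanishing to all higher indices. The paper disposes of this in one line, asserting that $J_{N+1}=0$ forces $J_{i}=0$ for all $i\geq N+1$ ``by the definitions \eqref{eq:j_impar} and \eqref{eq:j_par}''; unpacking that assertion, the even-to-odd step is immediate from \eqref{eq:j_par}, and the even-to-even step from the containment $J_{2(i+1)}\subseteq \bigcap_{l} V^{(l)}\otimes J_{2i}\otimes V^{(N-l)}$ noted after \eqref{eq:j_impar}, but the odd-to-even step genuinely requires either the cross-parity inclusion of Corollary \ref{corollary:2recur} or a small tensor-intersection-faithfulness computation — precisely the parity issue you flagged when explaining why Lemma \ref{lemma:2recur} alone does not suffice. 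Your alternative sidesteps this combinatorics entirely: once $J_{i}=0$, the module $K(A)_{i}=A\otimes J_{i}$ is zero, hence $\Ker(\delta_{i})=0$, and a second application of Lemma \ref{lemma:imp} kills $J_{i+1}$ regardless of parity. This homological bootstrap is legitimate because the injectivity assertion of Lemma \ref{lemma:imp} is unconditional (its proof uses only \eqref{eq:sprel}, Lemma \ref{lemma:2recur}, Proposition \ref{proposition:imp} and Lemma \ref{lemma:interj}, no exactness), so there is no circularity. What each approach buys: the paper's route stays inside the recursive structure of the spaces $J_{i}$ in $T(V)$ and is shorter on the page, while yours trades that combinatorial propagation for a uniform one-step argument and makes the parity subtlety explicit rather than implicit. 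Your final assembly of exactness (degrees $1,\dots,N-1$ by hypothesis, degree $N$ from $\Ker(\delta_{N})=0=\im(\delta_{N+1})$, trivially above $N$, then the remark after Definition \ref{definition:multikoszul}) matches the paper's conclusion; the paper's opening appeal to Proposition \ref{proposition:tresimp}, which you omit, is not actually needed for the argument.
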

%%%%%%%
\begin{proof}
By the proof of Proposition \ref{proposition:tresimp}, the exactness hypothesis is equivalent to say that $\Tor_{i}^{A}(k,k) \simeq J_{i}$, for all $i = 1, \dots, N-1$. 
Moreover, the injectivity of $\delta_{N}$ implies that that $J_{N+1}$ vanishes, for the restriction of $\delta_{N+1}$ to $J_{N+1}$ is injective, by Lemma \ref{lemma:imp}, and its image lies in the Kernel of $\delta_{N}$. 
This in turn implies that $J_{i}$ also vanishes, for $i \geq N+1$, 
by the definitions \eqref{eq:j_impar} and \eqref{eq:j_par}. 
In consequence, the left multi-Koszul complex $(K(A)_{\bullet},\delta_{\bullet})$ is exact in positive degrees, so $A$ is left multi-Koszul. 
\end{proof}

%%%%%%%%%%%%%%%%%%%%%%%%%%%%%%%%%%%%%%%%%%%%%%%%%%%%%%%%%%%%%%%%%%%%%%%%%%%%%%%%%%%%%%%%%%%%%%%%%%%%%%%%%%%%%%%%%%%%%%%%%%%%%%%%%%%%%%%%%%%%%%%%%%%%%%%%%%%%%%%
\subsection{\texorpdfstring{Properties of multi-Koszul algebras}{Properties of multi-Koszul algebras}}
\label{subsec:propmultikoszul}

We have a direct consequence of the Proposition \ref{proposition:leftbiright}. 
Consider the (unique) anti-morphism of (unitary) algebras $\tau : T(V) \rightarrow T(V)$ such that $\tau|_{V} = \mathrm{id}_{V}$. 
It is in fact an anti-automorphism of $T(V)$, and it further induces an algebra anti-isomorphism $\bar{\tau}: A \rightarrow T(V)/\cl{\tau (R)}$. 
In other words, it induces an isomorphism between the (usual) opposite algebra $A^{\mathrm{op}}$ of $A$ and $A^{\circ} = T(V)/\cl{\tau(R)}$. 
Note that $\tau(R) \subseteq T(V)^{\geq 2}$ is clearly a space of relations of $A^{\circ}$. 
By the previous isomorphism, we may thus say that $A^{\circ}$ is (also) the \emph{opposite algebra} of $A$.
%%%%%%%
\begin{corollary}
\label{corollary:koszulitydual}
The algebra $A^{\circ}$ is multi-Koszul if and only if $A$ is multi-Koszul.
\end{corollary}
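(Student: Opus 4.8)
The plan is to derive the statement from the bimodule characterisation of multi-Koszulity, Proposition \ref{proposition:leftbiright}, exploiting that $A^{\circ}$ is, through the algebra anti-isomorphism $\bar{\tau}$, exactly the opposite algebra $A^{\mathrm{op}}$, and that forming opposite bimodules is an exact operation. Throughout, let $\{J_{i}^{\circ}\}_{i \in \NN_{0}}$ be the graded subspaces of $T(V)$ attached to $A^{\circ}$, i.e. produced by the recursions \eqref{eq:j_impar} and \eqref{eq:j_par} with $R$ replaced by the space of relations $\tau(R)$ of $A^{\circ}$.

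The first and central step is to prove the identity $\tau(J_{i}) = J_{i}^{\circ}$ for every $i \in \NN_{0}$; since $\tau$ is a graded-linear automorphism of $T(V)$, this also gives $J_{i} \cong J_{i}^{\circ}$ as graded vector spaces. I would argue by induction, the cases $i = 0,1,2$ being immediate from $\tau(k)=k$, $\tau(V)=V$ and $\tau(J_{2})=\tau(R)=J_{2}^{\circ}$. For the inductive step I use that $\tau$ commutes with arbitrary sums and intersections of graded subspaces and, being an algebra anti-morphism fixing $V$, reverses tensor factors, so that $\tau(W_{1} \otimes \dots \otimes W_{m}) = \tau(W_{m}) \otimes \dots \otimes \tau(W_{1})$ and $\tau(R^{(j)}) = \tau(R)^{(j)}$. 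Applying $\tau$ to the odd recursion \eqref{eq:j_par} interchanges the two factors and yields $\tau(J_{2j+1}) = (J_{2j}^{\circ} \otimes V) \cap (V \otimes J_{2j}^{\circ}) = J_{2j+1}^{\circ}$ at once. The even recursion is more delicate: reversing each term $V^{(m_{1})} \otimes J_{2n_{1}} \otimes \dots \otimes V^{(m_{i+1})}$ produces a term of the same shape but indexed by the reversed partitions, and the asymmetry of the length condition in \eqref{eq:j_impar} means that the reversed family of subspaces does not literally coincide with the defining family of $J_{2(i+1)}^{\circ}$; however, after intersecting with $R^{(i+1)}$ (respectively $\tau(R)^{(i+1)}$) the two agree, by the left-right symmetry encoded in Lemma \ref{lemma:2recur} and Corollary \ref{corollary:2recur}. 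This even case is exactly where the real work lies.

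Next I would transport the whole bimodule complex. For an $A$-bimodule $M$ let $M^{\mathrm{op}}$ denote the $A^{\circ}$-bimodule with underlying space $M$ and the two actions transported through $\bar{\tau}$; the functor $(\place)^{\mathrm{op}}$ is an exact equivalence between $A$-bimodules and $A^{\circ}$-bimodules sending $A$ to $A^{\circ}$. Using Step 1 I would write down, for each $i$, the isomorphism of graded $A^{\circ}$-bimodules $K_{L-R}(A)_{i}^{\mathrm{op}} \to K_{L-R}(A^{\circ})_{i}$ determined on generators by $a \,|\, \omega \,|\, a' \mapsto \bar{\tau}(a') \,|\, \tau(\omega) \,|\, \bar{\tau}(a)$, which is well defined and bijective since $\tau|_{J_{i}}$ is a linear bijection onto $J_{i}^{\circ}$ and $\bar{\tau}$ is a linear bijection. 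It then remains to check that these maps intertwine the differentials, carrying $(\delta_{i}^{b})^{\mathrm{op}}$ to $\delta_{i}^{b,\circ}$ and the augmentation $\delta_{0}^{b}$ to $\delta_{0}^{b,\circ}$; this is done by comparing the explicit formulas, noting that $\tau$ swaps the two presentations of an element of $J_{2j+1}$ coming from \eqref{eq:j_par} and reverses the order of the telescoping terms defining $\delta_{2j}^{b}$.

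Finally I would conclude. By the previous step the augmented bimodule complex $\delta_{0}^{b} : K_{L-R}(A)_{\bullet} \to A$ is carried by the exact equivalence $(\place)^{\mathrm{op}}$ to an augmented complex isomorphic to $\delta_{0}^{b,\circ} : K_{L-R}(A^{\circ})_{\bullet} \to A^{\circ}$; since an exact equivalence both preserves and reflects exactness, the first is exact if and only if the second is. Invoking Proposition \ref{proposition:leftbiright} for $A$ and for $A^{\circ}$, these exactness statements are respectively equivalent to $A$ and $A^{\circ}$ being multi-Koszul, which is the claim. Alternatively, one could run the shorter route through Proposition \ref{proposition:tresimp}, combining $\tau(J_{i}) = J_{i}^{\circ}$ with the standard isomorphism $\Tor_{i}^{A^{\circ}}(k,k) \cong \Tor_{i}^{A}(k,k)$ coming from $A^{\circ} \cong A^{\mathrm{op}}$. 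The main obstacle is Step 1 in the even case, together with the differential check of Step 2: both amount to making the naive order-reversal by $\tau$ compatible with the intersection-with-$R^{(i+1)}$ normalisation, for which the faithfulness and symmetry results of Subsection \ref{subsec:inter} are indispensable.
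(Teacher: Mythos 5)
Your proposal is correct and follows the same route as the paper, whose entire proof is the single observation that the corollary is an immediate consequence of Proposition \ref{proposition:leftbiright}: the anti-isomorphism $\bar{\tau}$ identifies the left multi-Koszul complex of $A^{\circ}$ with the right multi-Koszul complex of $A$, and the left--right equivalence of that proposition finishes the argument. Your Step 1, the identity $\tau(J_{i}) = J_{i}^{\circ}$, is precisely what the paper leaves implicit, and your diagnosis of the even recursion as the crux is apt --- though note that Lemma \ref{lemma:2recur} and Corollary \ref{corollary:2recur} only supply \emph{inclusions} of $J_{j}$ into reversed-shape products, so the equality of the two differently normalized intersections arising from \eqref{eq:j_impar} (the constraint on $\bar{m}$ is not literally reversal-symmetric) is better settled by two-sided containment bookkeeping of the kind carried out in the proof of Proposition \ref{proposition:equiv}, resting on Proposition \ref{proposition:imp} and Corollary \ref{corollary:interincl2}.
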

%%%%%%%
\begin{proof}
It is an immediate consequence of Proposition \ref{proposition:leftbiright}. 
\end{proof}

Since the length of a minimal projective resolution of $k$ gives the global dimension of $A$, the following proposition is an immediate consequence of Corollary \ref{corollary:consezero}.
%%%%%%%
\begin{corollary}
Let $A$ be a locally finite dimensional nonnegatively graded algebra. 
If the global dimension of $A$ is $2$, then $A$ is multi-Koszul.
\end{corollary}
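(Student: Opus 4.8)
The plan is to invoke Corollary \ref{corollary:consezero} with $N = 2$. First I would recall, as noted in the sentence preceding the statement, that for a locally finite dimensional nonnegatively graded connected algebra the global dimension coincides with the length of the minimal projective resolution of the trivial module $k$. Hence the assumption that $A$ has global dimension $2$ means precisely that the minimal projective resolution of $k$ has length $2$, so it is an exact sequence of the form
\[
0 \longrightarrow P_{2} \overset{d_{2}}{\longrightarrow} P_{1} \overset{d_{1}}{\longrightarrow} P_{0} \longrightarrow k \longrightarrow 0.
\]

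Next I would use the fact, observed in the construction of the multi-Koszul complex, that the left multi-Koszul complex $(K(A)_{\bullet},\delta_{\bullet})$ coincides with this minimal projective resolution of $k$ up to homological degree $2$. In particular $K(A)_{i} = P_{i}$ and $\delta_{i} = d_{i}$ for $i \leq 2$, so the multi-Koszul complex is exact in homological degree $1$ (since the minimal resolution is), and $\delta_{2} = d_{2}$. The only remaining point is the injectivity of $\delta_{2}$: since the displayed resolution has length $2$ it is exact at $P_{2}$, which forces $\Ker(d_{2}) = 0$, that is, $\delta_{2} = d_{2}$ is injective.

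With these two observations in hand, the hypotheses of Corollary \ref{corollary:consezero} are satisfied for $N = 2$: the left multi-Koszul complex is exact in homological degrees $\bullet = 1, \dots, N-1 = 1$, and $\delta_{N} = \delta_{2}$ is injective. Corollary \ref{corollary:consezero} then yields that $A$ is left multi-Koszul, and hence multi-Koszul by Proposition \ref{proposition:leftbiright}. The argument presents no real obstacle; the only step that requires a moment's care is translating the hypothesis ``global dimension $2$'' into the injectivity of $\delta_{2}$ via the length of the minimal resolution, which is exactly the input that Corollary \ref{corollary:consezero} was designed to accept.
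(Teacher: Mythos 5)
Your proof is correct and follows exactly the paper's route: the paper proves this corollary in one line by noting that the global dimension equals the length of the minimal projective resolution of $k$, that the multi-Koszul complex agrees with this resolution up to homological degree $2$, and then invoking Corollary \ref{corollary:consezero} with $N=2$. Your write-up merely spells out the details (exactness in degree $1$ and injectivity of $\delta_{2}$) that the paper leaves implicit.
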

%%%%%%%

We also have the following result, which is a generalization of \cite{HeRe}, Prop. 3.7, and which shows that the multi-Koszul property 
is stable under free products. 
The proof is completely parallel but we provide it for completeness. 
%%%%%%%
\begin{proposition}
\label{proposition:libre}
Let $\{ B^{s} : s \in S \}$, where $S$ is an index set, be a finite collection of locally finite dimensional nonnegatively graded connected algebras such that $B^{s}$ is multi-Koszul, for each $s \in S$. 
Then, the free product (\textit{i.e.} the coproduct in the category of graded algebras) $A = \coprod_{s \in S} B^{s}$ of the collection $\{ B^{s} : s \in S \}$ is a multi-Koszul algebra. 
\end{proposition}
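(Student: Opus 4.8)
The plan is to reduce everything to the \textrm{Tor} description of multi-Koszulity given in Proposition \ref{proposition:tresimp}. Writing each factor as $B^{s} = T(V^{s})/\cl{R^{s}}$, the free product is $A = T(V)/\cl{R}$ with $V = \bigoplus_{s \in S} V^{s}$ and $R = \bigoplus_{s \in S} R^{s}$, where each $R^{s} \subseteq T(V^{s})^{\geq 2} \subseteq T(V)$; one first checks that this $R$ satisfies \eqref{eq:sprel}, so it is a genuine space of relations for $A$ and the spaces $J_{i} = J_{i}(A)$ of \eqref{eq:j_impar}--\eqref{eq:j_par} are well defined. By Proposition \ref{proposition:tresimp} it then suffices to produce isomorphisms of graded vector spaces $\Tor_{i}^{A}(k,k) \simeq J_{i}(A)$ for every $i \in \NN_{0}$. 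I would obtain these by combining a purely combinatorial identity for the spaces $J_{i}$ with a standard homological decomposition of the \textrm{Tor} groups of a free product.

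First I would establish the combinatorial identity $J_{i}(A) = \bigoplus_{s \in S} J_{i}(B^{s})$ for all $i \in \NN_{0}$, where $J_{i}(B^{s})$ is computed inside $T(V^{s})$ and regarded as a subspace of $T(V)$ via the canonical inclusion. The key device is the grading of $T(V)$ by \emph{colour words}: since the $V^{s}$ are independent subspaces of $V$, each monomial in $T(V)$ carries a well-defined sequence of colours in $S$, and $T(V) = \bigoplus_{c} T(V)_{c}$ decomposes over colour words $c$. Every space occurring in \eqref{eq:j_impar} and \eqref{eq:j_par} is a graded subspace for this decomposition, so intersections and products may be computed colour by colour. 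I would argue by induction on $i$. The inclusion $\supseteq$ is immediate, since each $J_{i}(B^{s})$ is supported on the constant colour word $s$ and, by compatibility of the defining intersections with $T(V^{s}) \hookrightarrow T(V)$, it satisfies all the conditions defining $J_{i}(A)$. For the reverse inclusion one shows that only monochromatic colour words survive: the odd step \eqref{eq:j_par} is a direct intersection computation, exactly as in the cases $i = 0, 1, 2$ treated above, using that $J_{2j}(B^{t})$ is a single monochromatic block; the even step \eqref{eq:j_impar} is handled by intersecting the colour-$c$ component against $R^{(i+1)}$ and against the subspaces $V^{(m_{1})} \otimes J_{2n_{1}} \otimes \dots \otimes V^{(m_{i+1})}$ for all partitions, and invoking Corollary \ref{corollary:2recur} together with the independence and faithfulness results (Corollaries \ref{corollary:interind0} and \ref{corollary:interind}, and Lemma \ref{lemma:interj}) to force the surviving colour word to be constant.

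Next I would invoke the homological input, namely that for a free product of locally finite dimensional nonnegatively graded connected algebras there is an isomorphism of graded vector spaces $\Tor_{i}^{A}(k,k) \simeq \bigoplus_{s \in S} \Tor_{i}^{B^{s}}(k,k)$ for every $i \in \NN$ (for $i = 0$ both sides are $k$). This is the standard statement that $A$ is the coproduct in the category of augmented graded algebras, and it follows for instance from a reduced bar construction argument, decomposing the augmentation ideal of $A$ into its normal form. Since each $B^{s}$ is multi-Koszul, Proposition \ref{proposition:tresimp} gives $\Tor_{i}^{B^{s}}(k,k) \simeq J_{i}(B^{s})$, and combining these isomorphisms with the combinatorial identity yields $\Tor_{i}^{A}(k,k) \simeq \bigoplus_{s \in S} J_{i}(B^{s}) = J_{i}(A)$ for all $i \in \NN_{0}$. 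A final appeal to Proposition \ref{proposition:tresimp} then shows that $A$ is multi-Koszul.

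The main obstacle is the reverse inclusion in the even-index step of the combinatorial identity: a priori an element of $J_{2(i+1)}(A)$ lives in $R^{(i+1)}$, which does contain genuinely multi-coloured words, namely products of relations coming from different factors, so monochromaticity is not automatic. The point that makes it work is that membership in the full intersection over all partitions of $V^{(m_{1})} \otimes J_{2n_{1}} \otimes \dots \otimes V^{(m_{i+1})}$ imposes, at every position, the presence of a monochromatic $J_{2n}$-block, and these requirements are jointly incompatible with a change of colour; this is precisely the kind of statement that the tensor-intersection-faithfulness results of Subsection \ref{subsec:inter} are designed to establish. A secondary point requiring care is that the free-product \textrm{Tor} decomposition must be taken as an isomorphism of graded vector spaces compatible with the grading used in Proposition \ref{proposition:tresimp}.
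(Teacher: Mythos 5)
Your proposal is correct, but it takes a genuinely different route from the paper. The paper never passes through Proposition \ref{proposition:tresimp}: it proves exactness of the multi-Koszul complex directly, by observing (as you do) that $J_{i} = \oplus_{s \in S} J_{i}^{s}$, then decomposing $K(A)_{\bullet} = \oplus_{s \in S} A \otimes_{B^{s}} K(B^{s})_{\bullet}$ in positive homological degrees with matching differentials in degrees $\geq 2$, and showing each summand $A \otimes_{B^{s}} K(B^{s})_{\bullet}$ is acyclic via the K\"unneth spectral sequence $E^{2}_{p,q} = \Tor_{p}^{B^{s}}(A, H_{q}(K(B^{s})_{\bullet}))$ together with the freeness of $A$ as a $B^{s}$-module (degree $1$ being automatic for a connected algebra). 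You instead reduce to dimension counts: the same combinatorial identity $J_{i}(A) = \oplus_{s} J_{i}(B^{s})$ plus the classical free-product decomposition $\Tor_{i}^{A}(k,k) \simeq \oplus_{s} \Tor_{i}^{B^{s}}(k,k)$ for $i \geq 1$, and then apply Proposition \ref{proposition:tresimp} in both directions. Both routes share the combinatorial identity (the paper dismisses it as ``trivial to verify''; your colour-word grading with the sliding of overlapping monochromatic blocks is the right way to make it precise, and is in fact cleaner than your gesture toward the tensor-intersection-faithfulness corollaries, which are not quite the tool that forces monochromaticity). What each approach buys: the paper's argument is self-contained modulo the K\"unneth spectral sequence and produces the explicit decomposition of the minimal resolution of $A$ over the factors, which is more information than an abstract isomorphism of $\Tor$ groups; your argument outsources the homological content to the standard coproduct fact for augmented algebras --- true, but itself requiring a Mayer--Vietoris or bar-construction normal-form argument of roughly the same weight as the paper's spectral sequence step --- in exchange for only ever needing graded dimension equalities, which is exactly what Proposition \ref{proposition:tresimp} was designed to exploit. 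One small point you flag correctly and should keep: the free-product $\Tor$ decomposition must be an isomorphism of \emph{graded} vector spaces for the usual (Adams) grading, which the bar-construction argument does give since the normal-form decomposition of $A_{>0}$ is homogeneous.
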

%%%%%%%
\begin{proof}
Suppose $B^{s} = T(V^{s})/\cl{R^{s}}$, for $s \in S$, is a multi-Koszul algebra, where $V^{s}$ and $R^{s} \subseteq T(V^{s})_{\geq 2}$ are 
locally finite dimensional positively graded vector spaces. 
By the definition of the free product, we may consider that $A = T(V)/\cl{R}$, where $V = \oplus_{s \in S} V^{s}$ and $R = \oplus_{s \in S} R^{s}$. 
The canonical inclusion $B^{s} \hookrightarrow A$ is a morphism of graded algebras, and it makes $A$ a free graded (left or right) $B^{s}$-module. 
For $s \in S$, denote by $J_{i}^{s}$ the graded vector space defined by the recursive equations \eqref{eq:j_impar} and \eqref{eq:j_par} for the algebra $B^{s}$, 
and by $J_{i}$ the corresponding one defined for $A$. 

Since the graded vector spaces $V^{s}$ are independent, by the definition of the tensor algebra it is trivial to verify that $J_{i} = \oplus_{s \in S} J_{i}^{s}$, for $i \in \NN$.     
If $(K(B^{s})_{\bullet},\delta_{\bullet}^{s})$ is the multi-Koszul complex of $B^{s}$, which is acyclic in positive homological degrees by assumption, 
we have that $A \otimes_{B^{s}} K(B^{s})_{\bullet} = A \otimes J_{\bullet}^{s}$ is also acyclic in positive homological degrees, as we now show. 
By the K\"unneth spectral sequence $E^{2}_{p,q} = \Tor_{p}^{B^{s}}(A,H_{q}(K(B^{s})_{\bullet} )) \Rightarrow H_{p+q}(A \otimes_{B^{s}} K(B^{s})_{\bullet})$ 
(see \cite{W}, Application 5.6.4). 
The exactness of the Koszul complex of $B^{s}$ and the freeness of the $B^{s}$-module $A$ imply that $E^{2}_{p,q} = 0$ if $(p,q) \neq (0,0)$, 
so $H_{n}(A \otimes_{B^{s}} K(B^{s})_{\bullet}) = 0$, for $n \geq 1$. 

We now note that the multi-Koszul complex $(K(A)_{\bullet},\delta_{\bullet})$ of the algebra $A$ can be decomposed as $K(A)_{\bullet} = \oplus_{s \in S} A \otimes_{B^{s}} K(B^{s})_{\bullet}$, 
for $\bullet \geq 1$, and $\delta_{\bullet} = \oplus_{s \in S} \delta_{\bullet}^{s}$ for $\bullet \geq 2$, the exactness of $A \otimes_{B^{s}} K(B^{s})_{\bullet}$ 
in positive homological degrees tells us that $K(A)_{\bullet}$ is acyclic in homological degrees greater than or equal to $2$. 
On the other hand, the exactness of the multi-Koszul complex in homological degree $1$ is automatically satisfied for a nonnegatively graded connected algebra. 
We have thus that $K(A)_{\bullet}$ is exact in positive homological degrees, so $A$ is multi-Koszul. 
\end{proof}

Another interesting property for this class of algebras is the following. 
%%%%%%%
\begin{proposition}
\label{proposition:k2}
Let $A$ be a finitely generated nonnegatively graded connected algebra such that its space of relations $R$ is finite dimensional, and assume that $A$ is multi-Koszul. 
Then, the graded algebra $\mathcal{E}xt_{A}^{\bullet}(k,k) = \mathrm{Ext}_{A}^{\bullet}(k,k)$ is generated by $\Ext_{A}^{1} (k,k)$ and $\Ext_{A}^{2} (k,k)$, \textit{i.e.} it is $\mathcal{K}_{2}$ 
(in the sense of Cassidy and Shelton). 
\end{proposition}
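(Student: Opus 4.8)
The plan is to reduce the $\mathcal{K}_2$ condition to a concrete injectivity statement about the spaces $J_i$ and then settle it using the structural inclusions proved in Subsection \ref{subsec:defimultikoszul}. Write $E^{i} = \Ext_{A}^{i}(k,k)$, graded by the cohomological degree, so that $E^{0} = k$ and, by Proposition \ref{proposition:tresimpext} together with Remark \ref{remark:extext} (which also gives $\mathcal{E}xt = \Ext$ and the finite dimensionality of every $J_{i}$, since $A$ is finitely generated with $R$ finite dimensional), we have $E^{i} \simeq J_{i}^{\#}$. First I would record the elementary reduction that $E^{\bullet}$ is generated by $E^{1}$ and $E^{2}$ if and only if the Yoneda multiplication map
\[
  m_{i} \colon (E^{1} \otimes E^{i-1}) \oplus (E^{2} \otimes E^{i-2}) \longrightarrow E^{i}
\]
is surjective for every $i \geq 3$. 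This follows by induction on $i$: any product of total degree $i \geq 3$ has at least two factors, and splitting off its first factor (which has degree $1$ or $2$) exhibits $E^{i} \subseteq E^{1} E^{i-1} + E^{2} E^{i-2}$, while the converse implication is immediate from the induction hypothesis.

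Since everything is locally finite dimensional, I would then dualize. Using the identification of $\Tor_{\bullet}^{A}(k,k) = \bigoplus_{i} J_{i}$ as the coalgebra dual to the Yoneda algebra $E^{\bullet}$, the surjectivity of $m_{i}$ is equivalent to the injectivity of the pair of comultiplication components
\[
  (\Delta_{1,i-1}, \Delta_{2,i-2}) \colon J_{i} \longrightarrow (J_{1} \otimes J_{i-1}) \oplus (J_{2} \otimes J_{i-2}).
\]
It therefore suffices to exhibit, for each $i \geq 3$, one injective component: the first when $i$ is odd, the second when $i$ is even. The natural candidates are the canonical inclusions coming from the construction of the $J_{i}$. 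Indeed, when $i$ is odd, \eqref{eq:j_par} gives $J_{i} = (V \otimes J_{i-1}) \cap (J_{i-1} \otimes V) \subseteq V \otimes J_{i-1} = J_{1} \otimes J_{i-1}$, which is visibly injective; when $i$ is even, Lemma \ref{lemma:2recur} gives $J_{i} \subseteq R \otimes J_{i-2} = J_{2} \otimes J_{i-2}$, again injective. Once these inclusions are matched with the corresponding coproduct components, the injectivity of $(\Delta_{1,i-1}, \Delta_{2,i-2})$, hence the surjectivity of $m_{i}$, is immediate.

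The crux, and the step I expect to be the main obstacle, is to verify that $\Delta_{1,i-1}$ for $i$ odd (resp. $\Delta_{2,i-2}$ for $i$ even) really coincides, up to a nonzero scalar, with the canonical inclusion $J_{i} \hookrightarrow J_{1} \otimes J_{i-1}$ (resp. $J_{i} \hookrightarrow J_{2} \otimes J_{i-2}$). The coalgebra structure on $\Tor_{\bullet}^{A}(k,k)$ is induced by the deconcatenation coproduct on the reduced bar resolution, so reading off its low components requires transporting this coproduct along a comparison equivalence with the minimal multi-Koszul resolution $K_{L-R}(A)_{\bullet}$. Concretely, I would construct the low-degree part of an (approximate, coassociative-up-to-homotopy) diagonal $K_{L-R}(A)_{\bullet} \to K_{L-R}(A)_{\bullet} \otimes_{A} K_{L-R}(A)_{\bullet}$ and compute its $(1,i-1)$ and $(2,i-2)$ homogeneous pieces; the tensor-intersection faithfulness machinery of Subsection \ref{subsec:inter} (Proposition \ref{proposition:imp} and Corollary \ref{corollary:2recur}) is exactly what guarantees that the resulting maps are well defined and agree with the stated inclusions, despite $R$ living in mixed internal degrees. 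This is a low-degree precursor of the full Yoneda description obtained in Section \ref{sec:yonedamultikoszul}, and one may alternatively invoke Theorem \ref{theorem:yoneda} directly to obtain the same identification. With the identification in hand, the two cases above complete the argument, showing that $\Ext_{A}^{\bullet}(k,k)$ is generated in degrees $1$ and $2$, i.e. is $\mathcal{K}_{2}$.
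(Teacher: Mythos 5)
Your proposal is correct, but it takes a genuinely different route from the paper's. The paper does not dualize a generation statement at all: it verifies the Cassidy--Shelton criterion directly, letting $E_{i}$ be the matrix with entries in $I/(T(V)_{>0} . I + I . T(V)_{>0}) \simeq R$ obtained by reducing a lift to $T(V)$ of the composition $\delta_{i-1} \circ \delta_{i}$; the inclusion $J_{i} \subseteq R \otimes J_{i-2}$ of Lemma \ref{lemma:2recur} shows that $E_{i}$ represents an injective linear map $T(V) \otimes J_{i} \rightarrow T(V) \otimes J_{i-1}$, so its rows are linearly independent over $k$, and \cite{CS}, Thm. 4.4, concludes. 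Your argument instead makes the generation statement explicit: you reduce it to surjectivity of $(E^{1} \otimes E^{i-1}) \oplus (E^{2} \otimes E^{i-2}) \rightarrow E^{i}$, dualize (legitimate here, since each $J_{i}$ is finite dimensional under the hypotheses, as in Remark \ref{remark:extext}), and settle injectivity of the coproduct components with the canonical inclusions --- the same inclusion $J_{i} \subseteq R \otimes J_{i-2}$ for $i$ even, and \eqref{eq:j_par} for $i$ odd; in fact Lemma \ref{lemma:2recur} gives $J_{i} \subseteq R \otimes J_{i-2}$ for \emph{all} $i \geq 2$, and since $2$ is even the map $\iota_{2,i-2}$ is a plain inclusion for every parity of $i$, so the second component alone would suffice. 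The identification you rightly flag as the crux need not be built by hand: Theorem \ref{theorem:yoneda} states precisely that the Yoneda product restricted to $J_{i}^{\#} \otimes J_{i'}^{\#}$ is $\iota_{i,i'}^{\#} \circ c_{J_{i},J_{i'}}$, and your choices of $(i,i')$ always fall in the case where $\iota_{i,i'}$ is the genuine inclusion from Corollary \ref{corollary:2recur} (the Koszul signs in $c_{J_{i},J_{i'}}$ do not affect injectivity), so your middle paragraph on constructing an approximate diagonal on $K_{L-R}(A)_{\bullet}$ is superfluous. One structural caveat: Theorem \ref{theorem:yoneda} appears \emph{after} this proposition in the paper, so your proof introduces a forward reference; it is harmless, since the proof of that theorem nowhere uses Proposition \ref{proposition:k2}, but it does front-load the machinery of Section \ref{sec:yonedamultikoszul}. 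What the paper's route buys is brevity and independence from that section, at the price of invoking the external theorem of \cite{CS}; what yours buys is a self-contained argument within the paper that exhibits exactly which Yoneda products are surjective --- and both ultimately rest on the same structural fact, the inclusion of Lemma \ref{lemma:2recur}.
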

%%%%%%%
\begin{proof}
Let $E_{i}$ (following the notation of the article \cite{CS}, Section 4) be the matrix with entries in 
$I/(T(V)_{>0} . I + I . T(V)_{>0}) \simeq R$ given by the class of the matrix with entries in $T(V)$ 
which is a lift of the composition $\delta_{i-1} \circ \delta_{i}$.  
By Lemma \ref{lemma:2recur} we have the inclusion $J_{i} \subseteq R \otimes J_{i-2}$, which implies that the matrix $E_{i}$ 
represents an injective linear transformation of the form $T(V) \otimes J_{i} \rightarrow T(V) \otimes J_{i-1}$, 
so the rows $E_{i}$ are linearly independent over $k$. 
The statement is now a direct consequence of \cite{CS}, Thm. 4.4. 
\end{proof}

%%%%%%%
\begin{remark}
\label{remark:malo}
As explained in \cite{HeRe}, Rmk. 3.24, the converse of the previous proposition is not true in general 
(\textit{e.g.} see the algebra $B$ in \cite{CG}, which is not multi-Koszul, but it is $2$-$3$-Koszul in the sense of \cite{GM}). 
\end{remark}
%%%%%%%

%%%%%%%%%%%%%%%%%%%%%%%%%%%%%%%%%%%%%%%%%%%%%%%%%%%%%%%%%%%%%%%%%%%%%%%%%%%%%%%%%%%%%%%%%%%%%%%%%%%%%%%%%%%%%%%%%%%%%%%%%%%%%%%%%%%%%%%%%%%%%%%%%%%%%%%%%%%%%%%
\section{\texorpdfstring{The $A_{\infty}$-algebra structure of the Yoneda algebra of a multi-Koszul algebra}{The A-infinity-algebra structure of the Yoneda algebra of a multi-Koszul algebra}}
\label{sec:yonedamultikoszul}

In this section, we shall provide a direct procedure to compute the complete $A_{\infty}$-algebra structure of the Yoneda algebra $\mathcal{E}xt_{A}^{\bullet}(k,k)$ of a multi-Koszul algebra $A$. 
We will first compute the (plain) algebra structure by explicitly providing quasi-isomorphisms in both directions between the cochain complexes $\mathcal{H}om_{A}(K(A)_{\bullet},k)$ and $\mathcal{E}nd_{A}(K(A)_{\bullet})$. 
Then we shall compute the remaining higher multiplications.  
In order to do so, it will be useful to profit from the theory of $A_{\infty}$-algebras and $A_{\infty}$-coalgebras. 
Even though we refer for further references to \cite{LH}, or \cite{Prou}, we will provide a short introduction, in particular for stating our (sign) conventions and notation. 
%We will follow more closely the conventions of the latter cited work. 
A more intensive study of the $A_{\infty}$-algebra structure of the Yoneda algebra may be found in \cite{LPWZ09}, to which we also refer for further reading. 

We recall that, since the cochain complex $\mathcal{E}nd_{A}(K(A)_{\bullet})$ is a differential graded algebra (or dg algebra for short), 
its cohomology has an algebra structure, and further a structure of minimal $A_{\infty}$-algebra, defined via the theorem of T. Kadeishvili in \cite{K82}. 
Following \cite{LPWZ09}, any of these $A_{\infty}$-algebra structures on the cohomology of $\mathcal{E}nd_{A}(K(A)_{\bullet})$ is called a \emph{model}. 
Kadeishvili also proved in the mentioned article that any of these models on the cohomology ring are in fact quasi-isomorphic. 
It can be shown that the algebra structure of $\mathcal{E}xt_{A}^{\bullet}(k,k)$ is in fact independent of the projective resolution of the trivial module $k$ used to compute it, 
and furthermore, the $A_{\infty}$-structure is also unique up to quasi-isomorphism (see \cite{LPWZ09}, Lemma 4.2, (a)). 
Moreover, the endomorphism dg algebra of any projective resolution of the trivial $A$-module $k$ is quasi-isomorphic to the graded dual $B^{+}(A)^{\#}$ of the bar construction of $A$, 
as $A_{\infty}$-algebras (see \cite{LPWZ09}, Lemma 4.2, (b)).

%%%%%%%%%%%%%%%%%%%%%%%%%%%%%%%%%%%%%%%%%%%%%%%%%%%%%%%%%%%%%%%%%%%%%%%%%%%%%%%%%%%%%%%%%%%%%%%%%%%%%%%%%%%%%%%%%%%%%%%%%%%%%%%%%%%%%%%%%%%%%%%%%%%%%%%%%%%%%%%
\subsection{{\texorpdfstring{The algebra structure}{The algebra structure}}}

We will first compute the Yoneda product for the cohomology space $\mathcal{E}xt_{A}^{\bullet}(k,k)$. 
Just for convenience we recall that $\mathcal{E}nd_{A}(K(A)_{\bullet})$ is the graded algebra whose $i$-th homogeneous component $\mathcal{E}nd_{A}^{i}(K(A)_{\bullet})$ is given by 
\[     \prod_{j \in \ZZ} \mathcal{H}om_{A}(K(A)_{j},K(A)_{j-i}),     \]
for $i \in \ZZ$, together with the multiplication induced by the composition and the differential $\partial$ given as follows. 
If $(f_{j})_{j \in \ZZ} \in \mathcal{E}nd_{A}^{i}(K(A)_{\bullet})$, then the $j$-th component of $\partial((f_{j})_{j \in \ZZ}) \in \mathcal{E}nd_{A}^{i+1}(K(A)_{\bullet})$ is given by 
\[     \delta_{j-i} \circ f_{j} - (-1)^{i} f_{j-1} \circ \delta_{j},     \]
where we remark that, by definition of the complex $(K(A)_{\bullet},\delta_{\bullet})$, $\delta_{l}$ vanishes for $l \leq 0$. 

On the one hand, the augmentation map $\delta_{0} : K(A)_{\bullet} \rightarrow k$ induces a quasi-isomorphism, that we will denote $\phi$, 
from $\mathcal{E}nd_{A}(K(A)_{\bullet})$ to $\mathcal{H}om_{A}(K(A)_{\bullet},k)$. 
It is given explicitly by the linear map which sends $(f_{j})_{j \in \ZZ} \in \mathcal{E}nd_{A}^{i}(K(A)_{\bullet})$ to $(-1)^{i} \delta_{0} \circ f_{i}$. 
We remark that the differential of $\mathcal{H}om_{A}(K(A)_{\bullet},k)$ is given by $g \mapsto (-1)^{i} g \circ \delta_{i+1}$, for $g \in \mathcal{H}om_{A}(K(A)_{i},k)$. 
It is trivial to see that it is a map of complexes, \textit{i.e.} it commutes with the differentials, and furthermore it respects the grading.
That this is a quasi-isomorphism is a standard fact on homological algebra (see \cite{Bour}, \S 5.2, Prop. 4; and the first paragraph of \S 7.1, where it is done for 
the endomorphism dg algebra of an injective resolution, but the analogous considerations hold for projective resolutions). 
Note that the procedure described in the paragraph can be applied to any projective resolution of $k$. 

We will now show how to construct a linear map from $\mathcal{H}om_{A}(K(A)_{\bullet},k)$ to $\mathcal{E}nd_{A}(K(A)_{\bullet})$, that will be denoted by $\psi$. 
We first note that, since the differential of the complex $\mathcal{H}om_{A}(K(A)_{\bullet},k)$ vanishes, we have the obvious identification $\mathcal{H}om_{A}(K(A)_{i},k) \simeq J_{i}^{\#}$
of graded vector spaces, where $(\place)^{\#}$ denotes the usual graded dual of a graded vector space. 
Let us consider an element $f \in \mathcal{H}om_{A}(K(A)_{i},k)$, where $i \in \NN_{0}$, and we denote by $\bar{f} \in J_{i}^{\#}$ the corresponding element of the graded dual. 
We will construct elements $f_{j} \in \mathcal{H}om_{A}(K(A)_{j},K(A)_{j-i})$, for all $j \in \ZZ$, such that $(f_{j})_{j \in \ZZ}$ lies in fact in the kernel of the differential $\partial$ of 
the dg algebra $\mathcal{E}nd_{A}(K(A)_{\bullet})$ and such that $(-1)^{i} \delta_{0} \circ f_{i} = f$.
In order to do so, we will consider two cases. 

If the nonnegative integer $i$ is even, then we define $f_{j} : K(A)_{j} \rightarrow K(A)_{j-i}$ as follows. 
If $j < i$, we set $f_{j} = 0$.
We now consider the case $j \geq i$. 
We first note that, by Corollary \ref{corollary:2recur}, $J_{j} \subseteq J_{j-i} \otimes J_{i}$. 
Hence, we have a map of graded vector spaces $p_{j,i}^{f} : J_{j} \rightarrow J_{j-i}$ defined as the composition of the inclusion 
$J_{j} \subseteq J_{j-i} \otimes J_{i}$ and the map $\mathrm{id}_{J_{j-i}} \otimes \bar{f}$.
We then set $f_{j}$ to be $A$-linear map given by $f_{j}(a \otimes \omega) = a \otimes p_{j,i}^{f}(\omega)$. 

If the nonnegative integer $i$ is odd, the map $f_{j} : K(A)_{j} \rightarrow K(A)_{j-i}$ is given as follows. 
If $j < i$, we also set $f_{j} = 0$, so let us consider the case $j \geq i$. 
If $j$ is odd, Corollary \ref{corollary:2recur} tells us that $J_{j} \subseteq J_{j-i} \otimes J_{i}$, so we consider again the map $p_{j,i}^{f} : J_{j} \rightarrow J_{j-i}$ of graded vector spaces 
given by the composition of the inclusion $J_{j} \subseteq J_{j-i} \otimes J_{i}$ and the map $\mathrm{id}_{J_{j-i}} \otimes \bar{f}$. 
We then set $f_{j}$ to be $A$-linear map defined as $f_{j}(a \otimes \omega) = - a \otimes p_{j,i}^{f}(\omega)$. 
If $j$ is even, by Corollary \ref{corollary:2recur} we have that $J_{j} \subseteq T(V) \otimes J_{j-i} \otimes J_{i}$. 
We have in this case the map $p_{j,i}^{f} : J_{j} \rightarrow A \otimes J_{j-i}$ of graded vector spaces defined as the composition of the inclusion $J_{j} \subseteq T(V) \otimes J_{j-i} \otimes J_{i}$ 
and $\pi \otimes \mathrm{id}_{J_{j-i}} \otimes \bar{f}$, where $\pi : T(V) \rightarrow A$ is the canonical projection. 
Set $f_{j}$ to be $A$-linear map given by $f_{j}(a \otimes \omega) = a \, p_{j,i}^{f}(\omega)$. 

It is straightforward, but rather tedious, to show that the element $(f_{j})_{j \in \ZZ}$ is in the kernel of the differential of $\mathcal{E}nd_{A}(K(A)_{\bullet})$ 
and such that $(-1)^{i} \delta_{0} \circ f_{i} = f$. 
We define thus the map $\psi : \mathcal{H}om_{A}(K(A)_{\bullet},k) \rightarrow \mathcal{E}nd_{A}(K(A)_{\bullet})$ via $\psi(f) = (f_{j})_{j \in \ZZ}$. 
It is trivially verified that this is a morphism of graded vector spaces, which commutes with the differentials (for the image of $\psi$ lies in the kernel of the differential of its codomain 
and the domain has vanishing differential), and the composition $\phi \circ \psi$ is the corresponding identity map. 
Since $\phi$ is a quasi-isomorphism, $\psi$ satisfies the same property. 
Furthermore, the map induced by $\psi$ at the level of cohomology spaces is in fact the inverse of the corresponding map induced by $\phi$. 

These maps allow us to explicitly compute the algebra structure of the Yoneda algebra $\mathcal{E}xt_{A}^{\bullet}(k,k)$, since given two elements in the Yoneda algebra 
which are represented by $f \in \mathcal{H}om_{A}(K(A)_{i},k)$ and $g \in \mathcal{H}om_{A}(K(A)_{i'},k)$, or more concretely, by $f \in J_{i}^{\#}$ and $g \in J_{i'}^{\#}$, 
the product is just given by $\psi(\phi(f) \phi(g)) \in \mathcal{H}om_{A}(K(A)_{i+i'},k)$, or simply by the induced element in $J_{i+i'}^{\#}$. 
This can be written down in the following very explicit manner. 
In order to do so, we recall that we will consider $\mathcal{E}xt_{A}^{\bullet}(k,k)$ as a (cohomological) bigraded algebra 
(\textit{i.e.} a cohomological bigraded vector space provided with a multiplication, unit and augmentation which respect both gradings), with one grading coming from the cohomological degree, 
which will be called the \emph{cohomological grading}, and another one coming from the original grading of the modules over $A$, which will be called the \emph{Adams grading}. 

In the same manner, we consider that the space $J_{i}$ is concentrated in homological degree $i$ and the Adams grading coincides with the one induced by the grading of $V$, 
which was the only one we considered before. 
We moreover define the homological bigraded vector space $J = \oplus_{i \in \NN_{0}} J_{i}$, with the homological and Adams gradings induced from the ones of the homological bigraded vector spaces $J_{i}$, for $i \in \NN_{0}$. 
Note that $J^{\#} = \oplus_{i \in \NN_{0}} J_{i}^{\#}$ is a cohomological bigraded vector space, where $J_{i}^{\#}$ is concentrated in cohomological degree $i$. 
As explained before, we will apply the Koszul sign rule to the cohomological grading but not to the Adams grading. 

Let $i$ and $i'$ be two nonnegative integers. 
If either $i$ or $i'$ is even, we shall denote by $\iota_{i,i'}$ the inclusion map $J_{i+i'} \subseteq J_{i} \otimes J_{i'}$ given in Corollary \ref{corollary:2recur}. 
If both $i$ and $i'$ are odd, we shall denote by $\iota_{i,i'}$ the composition of the inclusion map $J_{i+i'} \subseteq T(V) \otimes J_{i} \otimes J_{i'}$ given in Corollary \ref{corollary:2recur} 
together with the map $\epsilon_{T(V)} \otimes \mathrm{id}_{J_{i}} \otimes \mathrm{id}_{J_{i'}}$, where $\epsilon_{T(V)} : T(V) \rightarrow k$ denotes the augmentation of the tensor algebra. 
In any case, $\iota_{i,i'}$ is a map of graded vector spaces from $J_{i+i'}$ to $J_{i} \otimes J_{i'}$. 
This in turn induces a map of graded vector spaces $\iota_{i,i'}^{\#} : (J_{i} \otimes J_{i'})^{\#} \rightarrow J_{i+i'}^{\#}$. 
Recall that, given $V$ and $W$ two locally finite dimensional bigraded vector spaces, where the first one is the cohomological grading and the second one is the Adams grading, 
we denote by $c_{V,W} : V^{\#} \otimes W^{\#} \rightarrow (V \otimes W)^{\#}$ 
the obvious isomorphism of bigraded vector spaces $c_{V,W}(f \otimes g)(v \otimes w) = (-1)^{\deg(v) \deg(g)} f(v) g(w)$, where $f, g, v, w$ are homogeneous elements, and 
$\deg(\place)$ denotes the cohomological degree of the corresponding element.  
Now, by making use of the usual identification $\mathcal{E}xt_{A}^{i}(k,k) \simeq J_{i}^{\#}$ of cohomological bigraded vector spaces and the previous comments, 
we obtain the following result, which provides a description of the structure of \emph{augmented cohomological bigraded algebra} 
(\textit{i.e.} a cohomological bigraded vector space provided with a multiplication, unit and augmentation which respect both gradings) of the Yoneda algebra.  
%%%%%%%
\begin{theorem}
\label{theorem:yoneda}
Let $A$ be a multi-Koszul algebra, and let $\{ J_{i} \}_{i \in \NN_{0}}$ be the collection of graded vector subspaces of $T(V)$ defined by $J_{0} = k$ and the recursive identities 
\eqref{eq:j_impar} and \eqref{eq:j_par}. 
We utilize the usual identification $\mathcal{E}xt_{A}^{i}(k,k) \simeq J_{i}^{\#}$, for $i \in \NN_{0}$, coming from the use of the multi-Koszul resolution for $A$. 
Given $i,i' \in \NN_{0}$, let $\iota_{i,i'} : J_{i+i'} \rightarrow J_{i} \otimes J_{i'}$ be the map of graded vector spaces introduced in the previous paragraph, and 
$c_{J_{i},J_{i'}} : J_{i}^{\#} \otimes J_{i'}^{\#} \rightarrow (J_{i} \otimes J_{i'})^{\#}$ be the usual identification of graded vector spaces also recalled in the previous paragraph. 
Then, the restriction of the Yoneda product of $\mathcal{E}xt_{A}^{\bullet}(k,k)$ to $J_{i}^{\#} \otimes J_{i'}^{\#}$ is given by $\iota_{i,i'}^{\#} \circ c_{J_{i},J_{i'}}$.    
\end{theorem}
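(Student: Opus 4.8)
The plan is to show that the explicit product formula $\iota_{i,i'}^{\#} \circ c_{J_{i},J_{i'}}$ arises directly from unwinding the definitions of the maps $\phi$ and $\psi$ constructed in the preceding paragraphs. Recall that the Yoneda product of two classes represented by $f \in J_{i}^{\#}$ and $g \in J_{i'}^{\#}$ is computed as $\phi\big(\psi(f) \circ \psi(g)\big)$, where $\psi(f) = (f_j)_{j \in \ZZ}$ and $\psi(g) = (g_j)_{j \in \ZZ}$ are the cocycles in $\mathcal{E}nd_{A}(K(A)_{\bullet})$ lifting $f$ and $g$, and where composition is the product of the endomorphism dg algebra. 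First I would observe that, since $\phi$ sends an endomorphism $(h_j)_{j\in\ZZ}$ of cohomological degree $i+i'$ to $(-1)^{i+i'} \delta_0 \circ h_{i+i'}$, the entire computation reduces to identifying the single component $(\psi(f) \circ \psi(g))_{i+i'} = f_{i'} \circ g_{i+i'}$ and then composing with $\delta_0$.

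The key steps are thus as follows. I would fix homogeneous $f,g$ and compute $\delta_0 \circ f_{i'} \circ g_{i+i'}$ evaluated on a generator $1 \otimes \omega \in A \otimes J_{i+i'} = K(A)_{i+i'}$, carefully tracking the three cases in the definition of $\psi$ according to the parities of $i$, $i'$, and $i+i'$. In each case $g_{i+i'}$ is built from the inclusion $\iota$ coming from Corollary \ref{corollary:2recur} (either $J_{i+i'} \subseteq J_{i} \otimes J_{i+i'-i}$ directly, or via $T(V) \otimes J_{i} \otimes J_{j-i}$ when both relevant degrees are odd) post-composed with $\mathrm{id} \otimes \bar g$, and $f_{i'}$ applies $\bar f$ to the remaining $J_{i'}$-factor. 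Composing with the augmentation $\delta_0$ kills all the $A$-tensor factors except the scalar, so what survives is precisely the scalar $\bar f \otimes \bar g$ applied after the inclusion $\iota_{i,i'}$. The sign bookkeeping — the signs $(-1)^{i}$ in $\phi$, the minus signs inserted into the odd cases of $\psi$, and the Koszul sign $(-1)^{\deg(v)\deg(g)}$ appearing in the definition of $c_{J_i,J_{i'}}$ — must be shown to combine so that the net result is exactly $\iota_{i,i'}^{\#}(c_{J_i,J_{i'}}(f \otimes g))$.

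I would organize the parity analysis to match the $\iota_{i,i'}$ defined just before the theorem: when either $i$ or $i'$ is even one uses the plain inclusion $J_{i+i'} \subseteq J_{i}\otimes J_{i'}$, and when both are odd (so $i+i'$ is even) one uses the augmentation $\epsilon_{T(V)}$ to collapse the leading $T(V)$-factor, which matches the $\pi$-then-project behaviour of the even-index $f_j$ in $\psi$. The verification that $\psi(f)$ is genuinely a cocycle — that $(f_j)_j$ lies in $\ker \partial$ — was asserted as "straightforward but tedious" in the construction, and I would invoke it rather than redo it, since it is logically prior to and independent of the product computation; the key input there is the compatibility of the inclusions of Corollary \ref{corollary:2recur} with the differentials $\delta^b_\bullet$.

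The main obstacle I anticipate is the sign reconciliation, not the structural identification. The three separate definitions of $f_j$ (even $i$; odd $i$ with $j$ odd; odd $i$ with $j$ even) carry different signs, and composing $f_{i'} \circ g_{i+i'}$ forces one to know the parities of intermediate indices and to correctly propagate the Koszul sign rule, which here applies to the cohomological (equivalently homological) degree but deliberately not to the Adams degree. I would handle this by treating the even/even, even/odd, odd/even, and odd/odd cases for $(i,i')$ in turn, in each case writing out $(-1)^{i+i'}\,\delta_0 \circ f_{i'} \circ g_{i+i'}(1\otimes\omega)$ and matching it termwise against $\iota_{i,i'}^{\#}\big(c_{J_i,J_{i'}}(f\otimes g)\big)(\omega)$, with the Adams-degree factor $\deg(\cdot)$ in $c_{J_i,J_{i'}}$ being exactly the extra sign needed to absorb the discrepancy; confirming that these always cancel is the delicate heart of the proof.
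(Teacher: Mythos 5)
Your proposal is correct and follows essentially the same route as the paper: the paper gives no separate proof of the theorem, obtaining it exactly as you propose by lifting $f \in J_{i}^{\#}$ and $g \in J_{i'}^{\#}$ through $\psi$, composing in $\mathcal{E}nd_{A}(K(A)_{\bullet})$, projecting back with $\phi$, and unwinding the parity cases coming from Corollary \ref{corollary:2recur} (and your formula $\phi(\psi(f)\psi(g))$ is the type-correct reading of the paper's misprinted ``$\psi(\phi(f)\phi(g))$''). Two slips to repair when you carry it out: the relevant component is $f_{i} \circ g_{i+i'}$, not $f_{i'} \circ g_{i+i'}$ --- since $g$ has degree $i'$, the map $g_{i+i'}$ lands in $K(A)_{i}$, contracting the right-hand $J_{i'}$-factor of $\iota_{i,i'}(\omega)$ against $\bar{g}$, after which $f_{i}$ contracts the remaining $J_{i}$-factor (not a $J_{i'}$-factor) against $\bar{f}$. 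Moreover, the compensating sign in $c_{J_{i},J_{i'}}$ is governed by the \emph{cohomological} degrees, so on $J_{i}^{\#} \otimes J_{i'}^{\#}$ it is the constant $(-1)^{i i'}$, which is precisely what the minus sign built into the odd--odd case of $\psi$ together with the $(-1)^{i+i'}$ from $\phi$ produces; your closing claim that the \emph{Adams}-degree factor supplies the needed sign contradicts your own (correct) earlier statement that the Koszul rule is applied to the cohomological degree only, and following it would derail the sign check you rightly identify as the delicate part.
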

%%%%%%%

%%%%%%%
\begin{remark}
It is easy to check that the algebra algebra structure for $\mathcal{E}xt_{A}^{\bullet}(k,k)$ given in the theorem coincides with the one deduced from \cite{HeRe}, Prop. 3.21 and Rmk. 3.22, 
under the further assumptions that the algebra $A$ is finitely generated in degree $1$ with a finite dimensional space of relations. 
In particular, the previous algebra structure also coincides with the one given in \cite{BM}, Prop. 3.1, for the Yoneda algebra of a generalized Koszul (homogeneous) algebra.
\end{remark}
%%%%%%%

%%%%%%%%%%%%%%%%%%%%%%%%%%%%%%%%%%%%%%%%%%%%%%%%%%%%%%%%%%%%%%%%%%%%%%%%%%%%%%%%%%%%%%%%%%%%%%%%%%%%%%%%%%%%%%%%%%%%%%%%%%%%%%%%%%%%%%%%%%%%%%%%%%%%%%%%%%%%%%%
\subsection{{\texorpdfstring{The $A_{\infty}$-algebra structure}{The A-infinity-algebra structure}}}

We will now describe the $A_{\infty}$-algebra structure of the Yoneda algebra. 
First, we note that the graded dual of a locally finite dimensional augmented cohomological bigraded algebra is a \emph{coaugmented homological bigraded coalgebra} 
(\textit{i.e.} a homological bigraded vector space provided with a comultiplication, counit and coaugmentation which respect both gradings). 
Indeed, if $E = \oplus_{(n,m) \in \ZZ^{2}} E^{n}_{m}$ is a locally finite dimensional augmented cohomological bigraded algebra, then $E^{\#}$ is a coaugmented homological bigraded coalgebra, 
where the comultiplication of $E^{\#}$ is the composition of the graded dual of the multiplication of $E$ and $c_{E,E}^{-1}$, the counit of $E^{\#}$ is the graded dual of the unit of $E$, and 
the coaugmentation of $E^{\#}$ is the graded dual of the augmentation of $E$. 
The corresponding result given by interchanging the terms ``augmented cohomological bigraded algebra'' and ``coaugmented homological bigraded coalgebra'' 
(without the locally finite dimensional assumption) also holds, 
with the obvious analogous definitions of mulplication, unit and augmentation. 
Hence, one finds that the homological bigraded vector space $J$ is naturally a coaugmented homological bigraded coalgebra. 
The counit of $J$ is the canonical projection onto the $J_{0}$ and the unit is the inclusion of $J_{0}$ inside $J$.
Moreover, the comultiplication $\Delta$ of $J$ is given by $\Delta|_{J_{i}} = \sum_{l=0}^{i} \iota_{l,i-l}$. 

We will extend the coaugmented coalgebra structure on $J$ to a \emph{minimal Adams graded coaugmented $A_{\infty}$-coalgebra} on $J$. 
For the following definitions we refer to \cite{Prou}, Chapitre 3, Section 3.1 (or also \cite{LH}, D\'ef. 1.2.1.1, 1.2.1.8, using the obvious equivalences 
between non(co)unitary objects and (co)augmented ones), even though we do not follow the same sign conventions and they do not consider any Adams grading 
(see for instance \cite{LPWZ09} for several uses of Adams grading in $A_{\infty}$-algebra theory). 
Moreover, our definitions will be somehow more restricted with respect to the gradings that the usual ones. 
We first recall that an \emph{Adams graded augmented $A_{\infty}$-algebra} structure on a cohomological bigraded vector space $A$ 
such that the cohomological grading is nonnegative and the component of zero cohomological degree is just $k$ (also lying in Adams degree zero) is the following data: 
\begin{itemize}
\item[(i)] a collection of maps $m_{i} : A^{\otimes i} \rightarrow A$ for $i \in \NN$ of cohomological degree $2-i$ 
and Adams degree zero satisfying the \emph{Stasheff identities} given by   
\begin{equation}
\label{eq:ainftyalgebra}
   \sum_{(r,s,t) \in \mathcal{I}_{n}} (-1)^{r + s t}  m_{r + 1 + t} \circ (\mathrm{id}_{A}^{\otimes r} \otimes m_{s} \otimes \mathrm{id}_{A}^{\otimes t}) = 0,
\end{equation} 
for $n \in \NN$, where $\mathcal{I}_{n} = \{ (r,s,t) \in \NN_{0} \times \NN \times \NN_{0} : r + s + t = n \}$. 
\item[(ii)] a map $\eta : k \rightarrow A$ of bidegree $(0,0)$ such that 
\[     m_{i} \circ (\mathrm{id}_{A}^{\otimes r} \otimes \eta \otimes \mathrm{id}_{A}^{\otimes t})     \]
vanishes for all $i \neq 2$ and all $r, t \geq 0$ such that $r+1+t = i$, and 
\[     m_{2} \circ (\mathrm{id}_{A} \otimes \eta) = \mathrm{id}_{A} = m_{2} \circ (\eta \otimes \mathrm{id}_{A}).    \]
\item[(ii)] a map $\epsilon : A \rightarrow k$ of bidegree $(0,0)$ such that $\epsilon \circ \eta = \mathrm{id}_{k}$, $\epsilon \circ m_{2} = \epsilon^{\otimes 2}$, and 
$\epsilon \circ m_{i} =0$, for all $i \in \NN \setminus \{ 2 \}$. 
\end{itemize}
It is further called \emph{minimal} if $m_{1}$ vanishes. 

We recall that a family of linear maps $\{f_{i} : C \rightarrow C_{i}\}_{i \in \NN}$, where $C$ and $C_{i}$, for $i \in \NN$, are vector spaces, is called \emph{locally finite} if, for all $c \in C$, 
there exists a finite subset $S \subseteq \NN$, which depends on $c$, such that $f_{i}(c)$ vanishes for all $i \in \NN \setminus S$. 
An \emph{Adams graded coaugmented $A_{\infty}$-coalgebra} structure on a homological bigraded vector space $C$ such that the homological grading 
is nonnegative and the component of zero homological degree is just $k$ (also lying in Adams degree zero) is the following data: 
\begin{itemize}
\item[(i)] a locally finite collection of maps $\Delta_{i} : C \rightarrow C^{\otimes i}$ for $i \in \NN$ of homological degree $i-2$ and Adams degree zero satisfying the following identities  
\begin{equation}
\label{eq:ainftycoalgebra}
   \sum_{(r,s,t) \in \mathcal{I}_{n}} (-1)^{r s + t}  (\mathrm{id}_{C}^{\otimes r} \otimes \Delta_{s} \otimes \mathrm{id}_{C}^{\otimes t}) \circ \Delta_{r + 1 + t} = 0,
\end{equation}
for $n \in \NN$. 
\item[(ii)] a map $\epsilon : C \rightarrow k$ of bidegree $(0,0)$ such that 
\[     (\mathrm{id}_{C}^{\otimes r} \otimes \epsilon \otimes \mathrm{id}_{C}^{\otimes t}) \circ \Delta_{i}     \]
vanishes for all $i \neq 2$ and all $r, t \geq 0$ such that $r+1+t = i$, and 
\[     (\mathrm{id}_{C} \otimes \epsilon) \circ \Delta_{2} = \mathrm{id}_{C} = (\epsilon \otimes \mathrm{id}_{C}) \circ \Delta_{2}.    \]
\item[(ii)] a map $\eta : k \rightarrow C$ of bidegree $(0,0)$ such that $\epsilon \circ \eta = \mathrm{id}_{k}$, $\Delta_{2} \circ \eta(1) = \eta(1)^{\otimes 2}$, 
and $\Delta_{i} \circ \eta(1) = 0$, for all $i \in \NN \setminus \{ 2 \}$. 
\end{itemize}
An Adams graded coaugmented $A_{\infty}$-coalgebra $C$ is called \emph{minimal} if $\Delta_{1} = 0$. 
Note that the condition that the family $\{ \Delta_{n} \}_{n \in \NN}$ is locally finite follows from the other data if we further suppose that $\Ker(\epsilon)$ is positively graded for the Adams degree. 

We will not introduce the definitions of morphisms between (augmented) $A_{\infty}$-algebras, neither for (coaugmented) $A_{\infty}$-coalgebras, 
and refer to \cite{Prou}, Sections 3.2 and 3.3 (D\'ef. 3.3, 3.4, and 3.11), or \cite{LH}, Sections 1.2 and 1.3. 
We remark however that these morphisms are further supposed to preserve the Adams degree (\textit{cf}. \cite{LPWZ09}, Section 2). 

Notice that the graded dual $C^{\#}$ (as homological bigraded spaces) of an Adams graded coaugmented $A_{\infty}$-coalgebra $C$ is an Adams graded augmented $A_{\infty}$-algebra. 
Indeed, the unit of $C^{\#}$ is the graded dual of the counit of $C$ and the augmentation of $C^{\#}$ is the dual of the coaugmentation of $C$. 
Moreover, for $n \in \NN$, the multiplication $m_{n}$ of $C^{\#}$ is the composition of the canonical map $c_{C,\dots, C} : (C^{\#})^{\otimes n} \rightarrow (C^{\otimes n})^{\#}$ 
and the graded dual of $\Delta_{n}$. 

We shall now proceed to describe the $A_{\infty}$-coalgebra structure of $J$. 
In order to do so, we first construct the following maps. 
Let $n \geq 3$ and $\bar{j} = (j_{1},\dots, j_{n}) \in \NN^{n}_{0}$. 
We write $j_{l} = 2 j'_{l} + r_{l}$, for all $l = 1, \dots, n$, where $j'_{l} \in \NN_{0}$ and $r_{l} \in \{ 0,1\}$, and consider the direct sum decomposition 
\begin{equation}
\label{eq:sp}
     \bigoplus_{N \in \NN_{\geq 2}} \big(\bigcap_{\bar{m} \in \mathrm{Par}_{n+1}(N)} 
V^{(m_{1})} \otimes J_{2 j'_{1}} \otimes \dots \otimes V^{(m_{n})} \otimes J_{2 j'_{n}} \otimes V^{(m_{n+1})}\big).  
\end{equation}
Let $p_{N}^{\bar{j}}$ be the projection of the former vector subspace of the tensor algebra onto the $N$-th direct summand, for $N \in \NN_{\geq 2}$. 

If $\bar{j}$ satisfies that $j_{l}$ is even for all $1 \leq l \leq n$ except for two integers $1 \leq a < b \leq n$, 
for which $j_{a}, j_{b}$ are odd, \eqref{eq:j_imparbisbis} tells us that $J_{j_{1} + \dots + j_{n}}$ is included in \eqref{eq:sp}. 
Moreover, it is trivial to see (using definition \eqref{eq:j_par}, Lemma \ref{lemma:interj} and Proposition \ref{proposition:imp}) that the $n$-th direct summand of the former vector space is 
contained in $J_{2 j'_{1} + 1} \otimes \dots \otimes J_{2 j'_{n} + 1}$. 
Let us denote by $\iota_{2 j_{1} + 1, \dots, 2 j_{n} + 1}$ the map of graded vector spaces from $J_{j_{1} + \dots + j_{n}}$ to $J_{2 j'_{1} + 1} \otimes \dots \otimes J_{2 j'_{n} + 1}$ 
given by the composition of the inclusion of $J_{j_{1} + \dots + j_{n}}$ in \eqref{eq:sp}, the projection $p_{n}^{\bar{i}}$, and the inclusion of the $n$-th direct summand of \eqref{eq:sp} in 
$J_{2 j'_{1} + 1} \otimes \dots \otimes J_{2 j'_{n} + 1}$. 
In other words, given $\bar{i} = (i_{1},\dots,i_{n}) \in \NN^{n}$ such that $i_{l}$ is odd for all $1 \leq l \leq n$, we have defined a map $\iota_{i_{1}, \dots, i_{n}}$ of graded vector spaces 
from $J_{i_{1} + \dots + i_{n}+2-n}$ to $J_{i_{1}} \otimes \dots \otimes J_{i_{n}}$. 
Suppose further that $a=1$, that is, $j_{1}$ is odd. 
Then, the $N$-th direct summand of \eqref{eq:sp} is in fact contained in the $N$-th direct summand of 
\[          \bigoplus_{N \in \NN_{\geq 2}} \big(\bigcap_{\bar{m} \in \mathrm{Par}_{n}(N-1)} 
 J_{j_{1}} \otimes V^{(m_{1})} \otimes J_{2 j'_{2}} \otimes \dots \otimes V^{(m_{n-1})} \otimes J_{2 j'_{n}} \otimes V^{(m_{n})}\big),      \]
for all $N \in \NN_{\geq 2}$, which coincides with 
\[          J_{j_{1}} \otimes \bigoplus_{N \in \NN_{\geq 2}} \big(\bigcap_{\bar{m} \in \mathrm{Par}_{n}(N-1)} V^{(m_{1})} \otimes J_{2 j'_{2}} \otimes \dots \otimes V^{(m_{n-1})} \otimes J_{2 j'_{n}} \otimes V^{(m_{n})}\big),      \]
by Proposition \ref{proposition:imp} and Lemma \ref{lemma:interj}. 
Hence, we see that the restriction of $p_{n}^{\bar{j}}$ to \eqref{eq:sp} coincides with the restriction of $\mathrm{id}_{J_{j_{1}}} \otimes p_{n-1}^{(j_{2},\dots,j_{n})}$ to the same space. 
On the other hand, if we assume that $b = n$, \textit{i.e.} $j_{n}$ is odd, the restriction of $p_{n}^{\bar{j}}$ to \eqref{eq:sp} 
coincides with the restriction of $p_{n-1}^{(j_{1},\dots,j_{n-1})} \otimes \mathrm{id}_{J_{j_{n}}}$ to the same space. 

%%%%%%%
\begin{remark}
\label{remark:util}
Note that if $\bar{i} = (1, \dots, 1) \in \NN^{n}$, $\iota_{1, \dots, 1}$ is just the composition of the inclusion of $R$ inside the tensor algebra with the canonical projection $\pi_{n} : T(V) \rightarrow V^{(n)}$.  
More generally, for $i \in \NN$ even, consider $\bar{i} = (i-1,1,\dots, 1) \in \NN^{n}$ (the last $n-1$ integers are $1$'s). 
It is easy to check that $\iota_{\bar{i}} : J_{i} \rightarrow J_{i-1} \otimes V^{(n-1)}$ coincides with the composition of the inclusion $J_{i} \subseteq J_{i-1} \otimes T(V)_{>0}$ with the map $\mathrm{id}_{J_{i-1}} \otimes \pi_{n-1}$. 
Indeed, for the case $\bar{i} = (i-1,1,\dots, 1)$, the direct sum decomposition \eqref{eq:sp} gives 
\[    \bigoplus_{N \in \NN_{\geq 2}} \big(\bigcap_{m = 0}^{N} V^{(m)} \otimes J_{i-2} \otimes V^{(N-m)}\big).     \]
Moreover, the $N$-th direct summand of the former is trivially included in the $N$-th direct summand of 
\[    \bigoplus_{N \in \NN_{\geq 2}} J_{i-1} \otimes V^{(N-1)},     \]
for all $N \in \NN_{\geq 2}$, by Proposition \ref{proposition:imp} and Lemma \ref{lemma:interj}. 
Hence, the restriction of $\mathrm{id}_{J_{i-1}} \otimes \pi_{n-1}$ to the first direct sum decomposition coincides with the projection $p_{n}^{\bar{i}}$, 
so the restriction of $\mathrm{id}_{J_{i-1}} \otimes \pi_{n-1}$ to $J_{i}$ coincides with $\iota_{\bar{i}}$. 
\end{remark}
%%%%%%%

The $A_{\infty}$-(co)algebra structure of $J^{\#}$ (resp., $J$) is described in the next statement. 
%%%%%%%
\begin{proposition}
\label{proposition:yonedaainf}
Let $A$ be a multi-Koszul algebra, and let $\{ J_{i} \}_{i \in \NN_{0}}$ be the collection of graded vector subspaces of $T(V)$ defined by $J_{0} = k$ and the recursive identities 
\eqref{eq:j_impar} and \eqref{eq:j_par}. 
Define $J = \oplus_{i \in \NN_{0}} J_{i}$, which is considered as a homological bigraded vector space for the grading coming from the index $i$. 
By the previous theorem, it has the structure of a coaugmented bigraded coalgebra, and we denote $\Delta_{2}$ the comultiplication coming from the multiplication given in the previous theorem. 
We consider that $\Delta_{1}$ vanishes. 
Given $n \geq 3$ and $\bar{i} = (i_{1},\dots,i_{n}) \in \NN^{n}$ such that $i_{l}$ is odd for all $1 \leq l \leq n$, let 
$\iota_{i_{1}, \dots, i_{n}} : J_{i_{1} + \dots + i_{n}+2-n} \rightarrow J_{i_{1}} \otimes \dots \otimes J_{i_{n}}$ be the map of graded vector spaces introduced in the paragraph before Remark \ref{remark:util}, 
and $\Delta_{n}|_{J_{i}} = \sum \iota_{i_{1}, \dots, i_{n}}$, where the sum is indexed over all $n$-tuples of odd integers $(i_{1}, \dots, i_{n})$ such that $i_{1} + \dots + i_{n} +2 - n = i$. 
Then $J$ is a minimal Adams graded coaugmented $A_{\infty}$-coalgebra. 
This in turn implies that $J^{\#}$ is a minimal Adams graded augmented $A_{\infty}$-algebra, where the multiplication $m_{n} : (J^{\#})^{\otimes n} \rightarrow J^{\#}$ is the map whose restriction to $J_{i_{1}}^{\#} \otimes \dots \otimes J_{i_{n}}^{\#}$ is zero, if there is $1 \leq l \leq n$ such that $i_{l}$ is even, and is $\iota_{i_{1}, \dots, i_{n}}^{\#} \circ c_{J_{i_{1}}, \dots, J_{i_{n}}}$ otherwise.
\end{proposition}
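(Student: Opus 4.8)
The plan is to verify directly that the data $(\Delta_n)_{n \in \NN}$ together with $\epsilon$ and $\eta$ endow $J$ with the structure of a minimal Adams graded coaugmented $A_{\infty}$-coalgebra; the assertion about $J^{\#}$ then follows formally from the duality between Adams graded coaugmented $A_{\infty}$-coalgebras and augmented $A_{\infty}$-algebras recalled just before the statement, once one checks that $c_{J,\dots,J} \circ \Delta_n^{\#}$ restricts to $\iota_{i_1,\dots,i_n}^{\#} \circ c_{J_{i_1},\dots,J_{i_n}}$ on $J_{i_1}^{\#} \otimes \dots \otimes J_{i_n}^{\#}$ and vanishes as soon as some $i_l$ is even. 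The vanishing holds because, for $n \geq 3$, the map $\Delta_n$ outputs only tensors whose factors $J_{i_l}$ are indexed by odd integers.

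First I would dispose of the bookkeeping axioms. By construction $\Delta_n$ has homological degree $n-2$ and Adams degree $0$, and $\Delta_1 = 0$ gives minimality. Since $\Ker(\epsilon) = \oplus_{i \geq 1} J_i$ is positively graded for the Adams degree, the family $(\Delta_n)_n$ is automatically locally finite, as remarked after the definition of $A_{\infty}$-coalgebra. The counit and coaugmentation axioms are immediate: $\epsilon$ is the projection onto $J_0$, and for every $n \geq 3$ the image of $\Delta_n$ lies in a sum of tensor products of factors $J_{i_l}$ with $i_l$ odd, hence $\geq 1$, so no $J_0$-factor occurs and $(\mathrm{id}_J^{\otimes r} \otimes \epsilon \otimes \mathrm{id}_J^{\otimes t}) \circ \Delta_n = 0$ for $n \neq 2$; the remaining identities reduce to the counit and coaugmentation properties of the coalgebra $(J, \Delta_2, \epsilon, \eta)$ furnished by Theorem \ref{theorem:yoneda}.

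The core of the argument is the family of Stasheff identities \eqref{eq:ainftycoalgebra}, and the key point, which makes this situation tractable, is a parity reduction. On the one hand, for $s \geq 3$ the map $\Delta_s$ is nonzero only on the even-indexed components $J_p$: a sum of $s$ odd integers is congruent to $s$ modulo $2$, so the constraint $i_1 + \dots + i_s + 2 - s = p$ forces $p$ even. On the other hand, for $m = r+1+t \geq 3$ the map $\Delta_m$ outputs only odd-indexed factors, so the factor in position $r+1$ lands in some $J_k$ with $k$ odd. Consequently any composite $(\mathrm{id}_J^{\otimes r} \otimes \Delta_s \otimes \mathrm{id}_J^{\otimes t}) \circ \Delta_{r+1+t}$ with both $s \geq 3$ and $r+1+t \geq 3$ vanishes. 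As moreover $\Delta_1 = 0$, the level-$n$ identity collapses to the $s=2$ terms (whose outer factor is then $\Delta_{n-1}$) together with the two terms having $r+1+t = 2$ (whose inner factor is $\Delta_{n-1}$); thus only $\Delta_2$ and $\Delta_{n-1}$ intervene.

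It then remains to see that these surviving signed terms cancel. Here I would use the coherence relations established just before Remark \ref{remark:util}, namely that the restriction of $p_n^{\bar j}$ to \eqref{eq:sp} agrees with $\mathrm{id}_{J_{j_1}} \otimes p_{n-1}^{(j_2,\dots,j_n)}$ when $j_1$ is odd and with $p_{n-1}^{(j_1,\dots,j_{n-1})} \otimes \mathrm{id}_{J_{j_n}}$ when $j_n$ is odd. Using Proposition \ref{proposition:imp} and Lemma \ref{lemma:interj} to split and re-associate the relevant intersections inside the tensor algebra, both the composites obtained by applying $\Delta_2$ to one factor of the output of $\Delta_{n-1}$ and the two composites obtained by applying $\Delta_{n-1}$ to one factor of the output of $\Delta_2$ can be rewritten as one and the same iterated projection onto a refined partition; a telescoping cancellation, entirely parallel to the one carried out in the proof that $\delta_{i-1}^{b} \circ \delta_{i}^{b} = 0$, then makes the signed sum vanish. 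For $n = 3$ the identity is simply the coassociativity of $\Delta_2$, which holds because the Yoneda product of Theorem \ref{theorem:yoneda} is associative. The main obstacle, and the only genuinely laborious point, is the sign bookkeeping in this last step: tracking the Koszul signs $(-1)^{rs+t}$ together with those introduced by the isomorphisms $c_{\cdot,\dots,\cdot}$ and by the homological grading, and verifying that they align to yield exact cancellation rather than merely term-by-term matching.
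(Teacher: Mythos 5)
Your proposal is correct and takes essentially the same route as the paper's proof: the same parity reduction of the higher Stasheff identities \eqref{eq:ainftycoalgebra} to the terms involving only $\Delta_{2}$ and $\Delta_{n-1}$ (the paper's identity \eqref{eq:si(n+1)}, stated there at level $n+1$), the same verification of the surviving terms via the compatibility of the projections $p_{n}^{\bar{j}}$ with the intersection decompositions (Proposition \ref{proposition:imp}, Lemma \ref{lemma:interj}, and the coherence relations before Remark \ref{remark:util}), and the same dualization to obtain the statement for $J^{\#}$. The only inaccuracy is your description of the final step as a telescoping cancellation parallel to $\delta_{i-1}^{b} \circ \delta_{i}^{b} = 0$: the paper instead specializes the reduced identity by projecting onto each component $J_{i_{1}} \otimes \dots \otimes J_{i_{n+1}}$ (where at most one index can be even) and, in each of the four resulting cases according to the position of the even index, the surviving terms match pairwise as restrictions of one common iterated projection --- precisely the ``one and the same iterated projection'' mechanism you also describe, with the signs absorbed case by case through the Koszul sign rule rather than through any long cancellation.
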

%%%%%%%
\begin{proof}
We first note that $\Delta_{n}$ has bidegree $(n-2,0)$, for all $n \in \NN$. 
Moreover, it is easy to check that the family $\{ \Delta_{n} \}_{n \in \NN}$ is locally finite, since $\Ker(\epsilon)$ is positively graded for the Adams degree. 

Since $\Delta_{1}$ is zero, the first, second and third of the identities \eqref{eq:ainftycoalgebra} simply mean that $J$ is a coassociative coalgebra for the comultiplication $\Delta_{2}$, 
which is a direct consequence of the previous theorem. 

Suppose now that $n \geq 3$. 
The assumption that $\Delta_{n}$ vanishes on $J_{i}$ if $i$ is odd, and that its image is included in the sum of tensors of $n$ factors, each of which has odd homological degree, tell us that the 
$(n+1)$-th defining identity \eqref{eq:ainftycoalgebra} simplifies to give 
\begin{equation}
\label{eq:si(n+1)}
     \sum_{r=1}^{n-1} (-1)^{r} (\mathrm{id}_{J}^{\otimes r} \otimes \Delta_{2} \otimes \mathrm{id}_{J}^{\otimes (n-r-1)}) \circ \Delta_{n} = (\mathrm{id}_{J} \otimes \Delta_{n}) \circ \Delta_{2} - (-1)^{n} (\Delta_{n} \otimes \mathrm{id}_{J}) \circ \Delta_{2}.     
\end{equation}
It suffices to show the previous identity restricted to each subspace $J_{i}$. 
Note that the image of each term of the previous identity must be thus in the direct sum of (independent) subspaces $J_{i_{1}} \otimes \dots \otimes J_{i_{n+1}}$, 
where $i = i_{1} + \dots + i_{n+1} + 2 - n$. 
Then, it suffices to prove \eqref{eq:si(n+1)} for the special case that we restrict to $J_{i}$ and we compose with the projection $p_{i_{1}, \dots, i_{n+1}}$ onto $J_{i_{1}} \otimes \dots \otimes J_{i_{n+1}}$, 
where $i = i_{1} + \dots + i_{n+1} + 2 - n$. 
We shall refer to this new identity also as the $(i_{1}, \dots, i_{n+1})$-\emph{specialization} of \eqref{eq:si(n+1)}. 
By the assumption on the higher multiplications $m_{n}$, for $n \geq 3$, the image of any of the terms of \eqref{eq:si(n+1)} must be in the direct sum of the subspaces    
$J_{i_{1}} \otimes \dots \otimes J_{i_{n+1}}$, where there exists at most one $1 \leq j \leq n+1$ such that $i_{j}$ is even. 
If we compose the restriction of \eqref{eq:si(n+1)} to $J_{i}$ with the projection onto $J_{i_{1}} \otimes \dots \otimes J_{i_{n+1}}$, the only cases left to consider are thus: 
\begin{itemize}
\item[(i)] all $i_{1}, \dots, i_{n+1}$ are odd,
\item[(ii)] $i_{1}$ is even and $i_{2}, \dots, i_{n+1}$ are odd, 
\item[(iii)] $i_{n}$ is even and $i_{1}, \dots, i_{n}$ are odd, 
\item[(iv)] there is $1 < j < n+1$ such that $i_{j}$ is the only even integer among $i_{1}, \dots, i_{n+1}$. 
\end{itemize}
Let us write how the composition of $(n+1)$-th defining identity \eqref{eq:si(n+1)} restricted to $J_{i}$ and the projection onto $J_{i_{1}} \otimes \dots \otimes J_{i_{n+1}}$ further simplifies in each case, and how this identity follows from the construction of the higher comultiplication $\Delta_{n}$. 

In case $(i)$, the $(i_{1}, \dots, i_{n+1})$-specialization of \eqref{eq:si(n+1)} simplifies to give 
\[     p_{i_{1}, \dots, i_{n+1}} \circ (\mathrm{id}_{J} \otimes \Delta_{n}) \circ \Delta_{2}|_{J_{i}} = (-1)^{n} p_{i_{1}, \dots, i_{n+1}} \circ (\Delta_{n} \otimes \mathrm{id}_{J}) \circ \Delta_{2}|_{J_{i}},     \]
for all the terms of the left member of \eqref{eq:si(n+1)} vanish. 
By the Koszul sign rule this identity is equivalent to the commutativity of the square 
\[
\xymatrix
{
J_{i_{1} + \dots + i_{n+1}+2-n}
\ar[rr]^{\iota_{i_{1},i_{2} + \dots + i_{n+1}+2-n}}
\ar[dd]^{\iota_{i_{1} + \dots + i_{n}+2-n, i_{n+1}}}
&
&
J_{i_{1}} \otimes J_{i_{2} + \dots + i_{n+1}+2-n}
\ar[dd]^{\mathrm{id}_{J_{i_{1}}} \otimes \iota_{i_{2}, \dots, i_{n+1}}}
\\
&
&
\\
J_{i_{1} + \dots + i_{n}+2-n} \otimes J_{i_{n+1}}
\ar[rr]^{\iota_{i_{1}, \dots, i_{n}} \otimes \mathrm{id}_{J_{i_{n+1}}}}
&
&
J_{i_{1}} \otimes \dots \otimes J_{i_{n+1}}
}
\]
Note that, since all $i_{1}, \dots, i_{n+1}$ are odd the upper horizontal and the left vertical maps are the plain inclusions. 
The commutativity easily follows from the definitions. 
Indeed, consider the direct sum decomposition of the form 
\begin{equation}
\label{eq:sp*}
     \bigoplus_{N \in \NN_{\geq 2}} \big(\bigcap_{\bar{m} \in \mathrm{Par}_{n+2}(N)} 
V^{(m_{1})} \otimes J_{i_{1}-1} \otimes \dots \otimes V^{(m_{n+1})} \otimes J_{i_{n+1}-1} \otimes V^{(m_{n+2})}\big),  
\end{equation}
which trivially includes $J_{i_{1} + \dots + i_{n+1}+2-n}$. 
Proposition \ref{proposition:imp} and Lemma \ref{lemma:interj} tell us that the $N$-th direct summand of the former is trivially included in the $N$-th direct summand of 
\[     \bigoplus_{N \in \NN_{\geq 2}} \big(\bigcap_{\bar{m} \in \mathrm{Par}_{n}(N-2)} \hskip -10pt
J_{i_{1}} \otimes V^{(m_{1})} \otimes J_{i_{2}-1} \otimes V^{(m_{2})} \otimes \dots \otimes V^{(m_{n-1})} \otimes J_{i_{n}-1} \otimes V^{(m_{n})} \otimes J_{i_{n+1}}\big),     \]
for all $N \in \NN_{\geq 2}$, which, also by Proposition \ref{proposition:imp} and Lemma \ref{lemma:interj}, coincides with 
\[     \bigoplus_{N \in \NN_{\geq 2}} J_{i_{1}} \otimes \big(\bigcap_{\bar{m} \in \mathrm{Par}_{n}(N-2)} 
V^{(m_{1})} \otimes J_{i_{2}-1} \otimes \dots \otimes J_{i_{n}-1} \otimes V^{(m_{n})}\big) \otimes J_{i_{n+1}}.     \]
Hence, by the comments at the end of the paragraph before Remark \ref{remark:util} we see that the restriction of 
$\mathrm{id}_{J_{i_{1}}} \otimes p_{n-1}^{(i_{2},\dots,i_{n})} \otimes \mathrm{id}_{J_{i_{n+1}}}$ to \eqref{eq:sp*} coincides with 
the restrictions of $p_{n}^{(i_{1},\dots,i_{n})} \otimes \mathrm{id}_{J_{i_{n+1}}}$ and of $\mathrm{id}_{J_{i_{1}}} \otimes p_{n}^{(i_{2},\dots,i_{n+1})}$ to the same space. 
This proves the claimed commutativity. 

In case $(ii)$, the $(i_{1}, \dots, i_{n+1})$-specialization of \eqref{eq:si(n+1)} simplifies to give 
\[     p_{i_{1}, \dots, i_{n+1}} \circ (\Delta_{2} \otimes \mathrm{id}_{J}^{\otimes (n-1)}) \circ \Delta_{n}|_{J_{i}} = p_{i_{1}, \dots, i_{n+1}} \circ (\mathrm{id}_{J} \otimes \Delta_{n}) \circ \Delta_{2}|_{J_{i}},     \]
for all the other terms trivially vanish. 
This identity is equivalent to the commutativity of the square 
\[
\xymatrix
{
J_{i_{1} + \dots + i_{n+1}+2-n}
\ar[rr]^{\iota_{i_{1},i_{2} + \dots + i_{n+1}+2-n}}
\ar[dd]^{\iota_{i_{1} +i_{2}, i_{3}, \dots, i_{n+1}}}
&
&
J_{i_{1}} \otimes J_{i_{2} + \dots + i_{n+1}+2-n}
\ar[dd]^{\mathrm{id}_{J_{i_{1}}} \otimes \iota_{i_{2}, \dots, i_{n+1}}}
\\
&
&
\\
J_{i_{1}+i_{2}} \otimes J_{i_{3}} \otimes \dots \otimes J_{i_{n+1}}
\ar[rr]^{\iota_{i_{1}, i_{2}} \otimes \mathrm{id}_{J_{i_{3}}} \otimes \dots \otimes \mathrm{id}_{J_{i_{n+1}}}}
&
&
J_{i_{1}} \otimes \dots \otimes J_{i_{n+1}}
}
\]
The assumption on the integers $i_{1}, \dots, i_{n+1}$ tells us that the horizontal maps are the plain inclusions. 
The commutativity follows directly from the definitions, but we provide a proof. 
Consider the direct sum decomposition 
\begin{equation}
\label{eq:sp**}
\begin{split}
     &\bigoplus_{N \in \NN_{\geq 2}} \Big(\big(\bigcap_{\bar{m} \in \mathrm{Par}_{n+2}(N)} 
V^{(m_{1})} \otimes J_{i_{1}} \otimes V^{(m_{2})} \otimes J_{i_{2}-1} \otimes %V^{(m_{3})} \otimes
\dots %\otimes V^{(m_{n+1})} 
\otimes J_{i_{n+1}-1} \otimes V^{(m_{n+2})}\big) 
\\
&\cap \big(\bigcap_{\bar{m} \in \mathrm{Par}_{n+1}(N)} V^{(m_{1})} \otimes J_{i_{1} + i_{2} - 1} \otimes V^{(m_{2})} \otimes J_{i_{3}-1} %\otimes V^{(m_{3})} 
\otimes \dots %\otimes V^{(m_{n})} 
\otimes J_{i_{n+1}-1} \otimes V^{(m_{n+1})} \big)\Big).  
\end{split}
\end{equation}
It contains $J_{i_{1} + \dots + i_{n+1}+2-n}$. 
On the one hand, Proposition \ref{proposition:imp} and Lemma \ref{lemma:interj} tell us that the $N$-th direct summand of the former is trivially included in the $N$-th direct summand of  
\[     \bigoplus_{N \in \NN_{\geq 2}} J_{i_{1}} \otimes \big(\bigcap_{\bar{m} \in \mathrm{Par}_{n+1}(N)} 
V^{(m_{1})} \otimes J_{i_{2}-1} \otimes V^{(m_{2})} \otimes \dots \otimes V^{(m_{n})} \otimes J_{i_{n+1}-1} \otimes V^{(m_{n+1})}\big),      \] 
for all $N \in \NN_{\geq 2}$. 
On the other hand, it is obvious that the $N$-th direct summand of \eqref{eq:sp**} is contained in the $N$-th direct summand of
\[     \bigoplus_{N \in \NN_{\geq 2}} \big(\bigcap_{\bar{m} \in \mathrm{Par}_{n+1}(N)} V^{(m_{1})} \otimes J_{i_{1} + i_{2} - 1} \otimes V^{(m_{2})} \otimes J_{i_{3}-1} %\otimes V^{(m_{3})} 
\otimes \dots %\otimes V^{(m_{n})} 
\otimes J_{i_{n+1}-1} \otimes V^{(m_{n+1})} \big),      \] 
for all $N \in \NN_{\geq 2}$. 
It is trivial to see that each independent family given by the $N$-th direct summands of each of the two previous direct sum decompositions is a subfamily of the independent family 
\[     \{ \bigcap_{\bar{m} \in \mathrm{Par}_{n}(N)} J_{i_{1}} \otimes J_{i_{2} - 1} \otimes V^{(m_{1})} \otimes J_{i_{3}-1} \otimes V^{(m_{2})} 
\otimes \dots %\otimes V^{(m_{n})} 
\otimes J_{i_{n+1}-1} \otimes V^{(m_{n})} \}_{N \in \NN_{\geq 2}}.     \]
The comments at the end of the paragraph before Remark \ref{remark:util} tell us that the restriction of $\mathrm{id}_{J_{i_{1}}} \otimes p_{n}^{(i_{2},\dots,i_{n+1})}$ 
to \eqref{eq:sp**} coincides with the restrictions of $p_{n}^{(i_{1}+i_{2},i_{3},\dots,i_{n+1})}$ to the same space. 
This proves the claim. 

Dually to $(ii)$, under the assumptions of $(iii)$, the $(i_{1}, \dots, i_{n+1})$-specialization of \eqref{eq:si(n+1)} simplifies as  
\[     p_{i_{1}, \dots, i_{n+1}} \circ (\mathrm{id}_{J}^{\otimes (n-1)} \otimes \Delta_{2}) \circ \Delta_{n}|_{J_{i}} = p_{i_{1}, \dots, i_{n+1}} \circ (\Delta_{n} \otimes \mathrm{id}_{J}) \circ \Delta_{2}|_{J_{i}},     \]
for all the other terms are easily seen to vanish. 
This identity is equivalent to the commutativity of the square 
\[
\xymatrix
{
J_{i_{1} + \dots + i_{n+1}+2-n}
\ar[rr]^{\iota_{i_{1}+ \dots + i_{n}+2-n,i_{n+1}}}
\ar[dd]^{\iota_{i_{1}, \dots, i_{n-1}, i_{n} + i_{n+1}}}
&
&
J_{i_{1} + \dots + i_{n}+2-n} \otimes J_{i_{n+1}}
\ar[dd]^{\iota_{i_{1}, \dots, i_{n}} \otimes \mathrm{id}_{J_{i_{n+1}}}}
\\
&
&
\\
J_{i_{1}} \otimes \dots \otimes J_{i_{n-1}} \otimes J_{i_{n}+i_{n+1}} 
\ar[rr]^{\mathrm{id}_{J_{i_{1}}} \otimes \dots \otimes \mathrm{id}_{J_{i_{n-1}}} \otimes \iota_{i_{n}, i_{n+1}}}
&
&
J_{i_{1}} \otimes \dots \otimes J_{i_{n+1}}
}
\]
Notice that the hypotheses on the integers $i_{1}, \dots, i_{n+1}$ imply that the horizontal maps are the plain inclusions. 
The commutativity is clear from the definitions and follows by an analogous argument to the one given in the previous paragraph. 

Finally, in the case $(iv)$,  the $(i_{1}, \dots, i_{n+1})$-specialization of \eqref{eq:si(n+1)} gives  
\begin{multline*}
     p_{i_{1}, \dots, i_{n+1}} \circ (\mathrm{id}_{J}^{\otimes (j-2)} \otimes \Delta_{2} \otimes \mathrm{id}_{J}^{\otimes (n-j+1)}) \circ \Delta_{n}|_{J_{i}} 
\\ 
= p_{i_{1}, \dots, i_{n+1}} \circ (\mathrm{id}_{J}^{\otimes (j-1)} \otimes \Delta_{2} \otimes \mathrm{id}_{J}^{\otimes (n-j)}) \circ \Delta_{n}|_{J_{i}},     
\end{multline*}
for all the other terms are easily seen to vanish. 
This identity is equivalent to %the commutativity of the square 
%\begin{small}
%\[
%\xymatrix@C-30pt
%{
%J_{i_{1} + \dots + i_{n+1}+2-n}
%\ar[rr]^{\iota_{i_{1}, \dots, i_{j-2},i_{j-1}+i_{j},i_{j+1}, \dots, i_{n+1}}}
%\ar[dd]^{\iota_{i_{1}, \dots, i_{j-1},i_{j}+i_{j+1},i_{j+2}, \dots, i_{n+1}}}
%&
%&
%J_{i_{1}} \otimes \dots \otimes J_{i_{j-2}} \otimes  J_{i_{j-1}+i_{j}} \otimes  J_{i_{j+1}} \otimes \dots \otimes  J_{i_{n+1}}
%\ar[dd]^{\mathrm{id}_{J_{i_{1}}} \otimes \dots \otimes \mathrm{id}_{J_{i_{j-2}}} \otimes \iota_{i_{j-1},i_{j}} \otimes \mathrm{id}_{J_{i_{j+1}}} \otimes \dots \otimes \mathrm{id}_{J_{i_{n+1}}}}
%\\
%&
%&
%\\
%J_{i_{1}} \otimes \dots \otimes J_{i_{j-1}} \otimes  J_{i_{j}+i_{j+1}} \otimes  J_{i_{j+2}} \otimes \dots \otimes  J_{i_{n+1}}
%\ar[rr]^{\mathrm{id}_{J_{i_{1}}} \otimes \dots \otimes \mathrm{id}_{J_{i_{j-1}}} \otimes \iota_{i_{j},i_{j+1}} \otimes \mathrm{id}_{J_{i_{j+2}}} \otimes \dots \otimes \mathrm{id}_{J_{i_{n+1}}}}
%&
%&
%J_{i_{1}} \otimes \dots \otimes J_{i_{n+1}}
%}
%\]
%\end{small} 
\begin{multline}    
\label{eq:ult}
(\mathrm{id}_{J_{i_{1}}} \otimes \dots \otimes \mathrm{id}_{J_{i_{j-2}}} \otimes \iota_{i_{j-1},i_{j}} \otimes \mathrm{id}_{J_{i_{j+1}}} \otimes \dots \otimes \mathrm{id}_{J_{i_{n+1}}}) \circ \iota_{i_{1}, \dots, i_{j-2},i_{j-1}+i_{j},i_{j+1}, \dots, i_{n+1}}
\\ 
= (\mathrm{id}_{J_{i_{1}}} \otimes \dots \otimes \mathrm{id}_{J_{i_{j-1}}} \otimes \iota_{i_{j},i_{j+1}} \otimes \mathrm{id}_{J_{i_{j+2}}} \otimes \dots \otimes \mathrm{id}_{J_{i_{n+1}}}) \circ \iota_{i_{1}, \dots, i_{j-1},i_{j}+i_{j+1},i_{j+2}, \dots, i_{n+1}},     
\end{multline}
By the assumption on the integers $i_{1}, \dots, i_{n+1}$ we have that the left maps of the left and right compositions are inclusions. 
This identity is proved as follows. 
Consider the direct sum decomposition 
\begin{equation}
\label{eq:sp***}
\begin{split}
     &\bigoplus_{N \in \NN_{\geq 2}} \big(\bigcap_{\bar{m} \in \mathrm{Par}_{n}(N-2)} 
V^{(m_{1})} \otimes J_{i_{1}-1} \otimes V^{(m_{2})} \otimes \dots \otimes J_{i_{j-2}-1} \otimes V^{(m_{j-1})} \otimes J_{i_{j-1}}   
\\
&\otimes J_{i_{j}} \otimes J_{i_{j+1}} \otimes V^{(m_{j})} \otimes J_{i_{j+2}-1} \otimes V^{(m_{j+1})} \otimes 
\dots \otimes V^{(m_{n+1})} \otimes J_{i_{n+1}-1} \otimes V^{(m_{n})}\big), 
\end{split}
\end{equation}
which trivially includes $J_{i_{1} + \dots + i_{n+1} + 2 - n}$. 
Furthermore, its $N$-th direct summand includes the $N$-th direct summand of the decomposition \eqref{eq:sp} corresponding to $\bar{j} = (i_{1}, \dots, i_{j-2}, i_{j-1}+i_{j},i_{j+1},\dots, i_{n+1})$ 
and $\bar{j} = (i_{1}, \dots, i_{j-1}, i_{j}+i_{j+1},i_{j+2},\dots, i_{n+1})$, for all $N \in \NN_{\geq 2}$. 
So, the restriction of the projection of this decomposition onto the $n$-th direct summand to the space \eqref{eq:sp} for $\bar{j} = (i_{1}, \dots, i_{j-2}, i_{j-1}+i_{j},i_{j+1},\dots, i_{n+1})$ 
coincides with $p_{n}^{\bar{j}}$, and the same applies if $\bar{j} = (i_{1}, \dots, i_{j-1}, i_{j}+i_{j+1},i_{j+2},\dots, i_{n+1})$. 
This proves identity \eqref{eq:ult}. 
 
The properties satisfied by the counit and the augmentation are also clear. 
The proposition is thus proved. 
\end{proof}

%%%%%%%
\begin{remark}
Note that the pattern of the proof is, roughly, (the dual of) the one given by \cite{HL}, Thm. 4.5, even though our case is much involved. 
This similarity is for us, however, another indication that this definition of multi-Koszul property resembles the notion of generalized Koszul algebras given by Berger. 
\end{remark}
%%%%%%%

%%%%%%%
\begin{remark}
It is easy to check that the $A_{\infty}$-algebra structure for $J^{\#}$ given in the proposition coincides with the one given in \cite{HeRe}, Rmk. 3.25, in the particular case that $A$ is 
finitely generated in degree $1$ and has a finite dimensional space of relations. 
\end{remark}
%%%%%%%

The following definitions are the duals to the ones given by Kadeishvili in \cite{K80}, p. 235, and we refer to \cite{Prou}, Chapitre 3, Section 5, even though the sign convention is different.  
We recall that a linear map $\tau : C \rightarrow A$ of cohomological degree $1$ and Adams degree zero from a coaugmented Adams graded $A_{\infty}$-coalgebra $C$ 
to an augmented Adams graded dg algebra $A$ is called a \emph{twisting cochain} (sometimes called \emph{homotopical or generalized twisting cochain}) 
if the composition of $\tau$ with the augmentation of $A$ vanishes, the composition of the unit of $C$ with $\tau$ is also zero, and if 
\[     d \circ \tau = \sum_{n \in \NN} \mu^{(n)} \circ \tau^{\otimes n} \circ \Delta_{n},     \]
where $d$ is the differential of $A$ and $\mu^{(n)}$ denotes the $(n-1)$-th iteration of the product of $A$, as explained for the tensor algebra at the beginning of Subsection \ref{subsec:inter}. 
Note that the previous sum is finite when evaluated at $c \in C$, for the family $\{ \Delta_{i} \}_{i \in \NN}$ is locally finite. 
We denote by $B^{+}(\place)$ the bar construction of an augmented $A_{\infty}$-algebra and by $\Omega^{+}(\place)$ the cobar construction of an augmented $A_{\infty}$-coalgebra. 
Moreover, $\tau_{C} : C \rightarrow \Omega^{+}(C)$ indicates the \emph{universal twisting cochain}, given by the composition of the projection $C \rightarrow C/\mathrm{Im}(\eta)$, the shift 
and the canonical inclusion, where $\eta$ denotes the coaugmentation of $C$ (see \cite{Prou}, D\'ef. 3.14). 
Dually, for an augmented Adams graded dg algebra $A$, its \emph{universal twisting cochain} is the map $\tau_{A} : B^{+}(A) \rightarrow A$, given by the composition of the minus canonical 
projection onto $\mathrm{Ker}(\epsilon)$, the shift and canonical inclusion, where $\epsilon$ denotes the augmentation of $A$ (see \cite{Prou}, (2.23)). 
Then, twisting cochains $\tau : C \rightarrow A$ are in correspondence with morphisms of dg algebras $f_{\tau} : \Omega^{+}(C) \rightarrow A$ via $f_{\tau} \circ \tau_{C} = \tau$ 
(see \cite{Prou}, Lemme 3.17). 
In this case we may consider the \emph{twisted tensor product} $C \otimes_{\tau} A$, whose underlying vector space is given by the usual tensor product. 
We will consider it only in the case $A$ has vanishing differential. 
Under this assumption the twisted tensor product is given by the complex of graded right $A$-modules for the regular right action of $A$ provided with a differential $d_{\tau}$ defined as 
\[     \sum_{n \in \NN} (\mathrm{id}_{C} \otimes \mu^{(n)}) \circ (\mathrm{id}_{C} \otimes \tau^{\otimes (n-1)} \otimes \mathrm{id}_{A}) \circ (\Delta_{n} \otimes \mathrm{id}_{A}).     \] 

The following theorem must be well-known to the experts, %and it can be seen as a direct consequence of \cite{LH}, Prop. 2.2.4.1, and \cite{Prou}, Thm. 3.25, 
but we sketch a proof. 
It was announced by B. Keller at the X ICRA of Toronto, Canada, in 2002.
%%%%%%%
\begin{theorem}
\label{theorem:keller}
Let $C$ be a minimal Adams graded coaugmented $A_{\infty}$-coalgebra and $A$ be a nonnegatively graded connected algebra. 
Then, the following are equivalent:
\begin{itemize}
\item[(i)] There is a quasi-isomorphism of Adams graded augmented minimal $A_{\infty}$-algebras 
\[     \mathcal{E}xt_{A}^{\bullet}(k,k) \rightarrow C^{\#}.     \]
\item[(ii)] There is a twisting cochain $\tau : C \rightarrow A$ such that the twisted tensor product $C \otimes_{\tau} A$ is a minimal projective resolution of the trivial right $A$-module $k$.
\end{itemize}
\end{theorem}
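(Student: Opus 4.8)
The plan is to reduce both conditions to a single statement about the cobar construction of $C$, using Koszul duality in the form of the bar--cobar dictionary recalled just before the statement.

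First I would reformulate (i). Since $\mathcal{E}xt_A^\bullet(k,k)$ and $C^\#$ are both minimal Adams graded augmented $A_\infty$-algebras, any $A_\infty$-quasi-isomorphism between them is automatically an isomorphism: its linear component induces an isomorphism on cohomology, which for minimal objects is the underlying bigraded space itself, and an $A_\infty$-morphism whose linear part is invertible is invertible. Dualizing and using that $A$ is locally finite dimensional connected, so that $\Tor_\bullet^A(k,k)$ is locally finite dimensional and $\mathcal{E}xt_A^\bullet(k,k) \simeq \Tor_\bullet^A(k,k)^\#$ by \eqref{eq:isotorext}, condition (i) becomes equivalent to the existence of an isomorphism $C \simeq \Tor_\bullet^A(k,k)$ of minimal Adams graded coaugmented $A_\infty$-coalgebras. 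Now $\Tor_\bullet^A(k,k) = H_\bullet(B^+(A))$ carries, by the dual of the theorem of Kadeishvili, the structure of a minimal model of the dg coalgebra $B^+(A)$, together with an $A_\infty$-coalgebra quasi-isomorphism $\Tor_\bullet^A(k,k) \to B^+(A)$. Composing with the isomorphism above, and factoring an arbitrary quasi-isomorphism through minimal models for the converse, I conclude that (i) holds if and only if there is an $A_\infty$-coalgebra quasi-isomorphism $g : C \to B^+(A)$.

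Next I would invoke the correspondence recalled in the paragraph before the statement. By \cite{Prou}, Lemme 3.17, twisting cochains $\tau : C \to A$ are in bijection with morphisms of dg algebras $f_\tau : \Omega^+(C) \to A$, which are in turn in bijection with $A_\infty$-coalgebra morphisms $g : C \to B^+(A)$, the three being related by $\tau = \tau_A \circ g = f_\tau \circ \tau_C$. Under this dictionary the theorem reduces to the single claim that, for the twisting cochain $\tau$ corresponding to $g$ (equivalently to $f_\tau$), the twisted tensor product $C \otimes_\tau A$ is a minimal projective resolution of the trivial right $A$-module $k$ if and only if $f_\tau$ (equivalently $g$) is a quasi-isomorphism. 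To prove this I would use the canonical identification of complexes of right $A$-modules
\[
     C \otimes_\tau A \; = \; \big(C \otimes_{\tau_C} \Omega^+(C)\big) \otimes_{\Omega^+(C)} A,
\]
where $A$ is viewed as a left $\Omega^+(C)$-module through $f_\tau$; this holds because $f_\tau$ is an algebra map and $\tau = f_\tau \circ \tau_C$, so the two twisted differentials agree. The universal twisting cochain $\tau_C$ is acyclic, i.e.\ $C \otimes_{\tau_C}\Omega^+(C)$ is a resolution of $k$ by free right $\Omega^+(C)$-modules (the fundamental acyclicity of the cobar construction, \cite{Prou}, Chapitre 3), so $C \otimes_\tau A$ computes $\Tor_\bullet^{\Omega^+(C)}(k,A)$. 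If $f_\tau$ is a quasi-isomorphism, then $A \simeq \Omega^+(C)$ as an $\Omega^+(C)$-bimodule in the derived category, whence $\Tor_\bullet^{\Omega^+(C)}(k,A) \simeq \Tor_\bullet^{\Omega^+(C)}(k,\Omega^+(C)) = k$ and $C \otimes_\tau A$ resolves $k$. Conversely, if $C \otimes_\tau A$ resolves $k$, then $\Tor_\bullet^{\Omega^+(C)}(k,A) = k$; applying the derived functor $k \otimes_{\Omega^+(C)} (\place)$ to the cone of $f_\tau$, which is a bounded below complex of free modules over the connected graded algebra $\Omega^+(C)$, and using an iterated application of the graded Nakayama Lemma \ref{lemma:trivialmod} forces that cone to be acyclic, so $f_\tau$ is a quasi-isomorphism. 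Minimality of the resolution is then automatic: since $C$ is minimal ($\Delta_1 = 0$) and $\tau$ satisfies $\epsilon_{A} \circ \tau = 0$, the induced differential on $k \otimes_A (C \otimes_\tau A) \simeq C$ vanishes, and one concludes by the criterion recalled in the antepenultimate paragraph of Section \ref{sec:homo}.

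The main obstacle is the core equivalence of the third paragraph, and specifically the passage between acyclicity of the twisted tensor product and the quasi-isomorphism type of $f_\tau$: it requires the fundamental acyclicity of the cobar construction of an $A_\infty$-coalgebra, a careful check that the twisted differentials on $C \otimes_\tau A$ and on $(C \otimes_{\tau_C}\Omega^+(C))\otimes_{\Omega^+(C)} A$ coincide including signs in our conventions, and a derived Nakayama argument over the connected graded but in general infinite dimensional algebra $\Omega^+(C)$. The other delicate point is upgrading the linear identifications to genuine $A_\infty$-morphisms, which is precisely what makes the reformulation of (i) in the first paragraph legitimate.
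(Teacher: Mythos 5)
Your proposal is correct in substance and shares the paper's overall skeleton (Kadeishvili's theorem, the bar--cobar adjunction, and the dictionary between twisting cochains $\tau : C \rightarrow A$, dg algebra maps $f_{\tau} : \Omega^{+}(C) \rightarrow A$, and $A_{\infty}$-coalgebra maps $g : C \rightarrow B^{+}(A)$), but it proves the key equivalence by a genuinely different mechanism. The paper treats the two implications separately: for $(ii) \Rightarrow (i)$ it invokes Prout\'e's Thm.~3.25 ($f_{\tau}$ is a quasi-isomorphism if and only if $B^{+}(f_{\tau})$ is, if and only if $B^{+}(\Omega^{+}(C)) \otimes_{f_{\tau} \circ \tau_{\Omega^{+}(C)}} A$ is acyclic) and compares this twisted product with $C \otimes_{\tau} A$ via the adjunction quasi-isomorphism $\psi : C \rightarrow B^{+}(\Omega^{+}(C))$; for $(i) \Rightarrow (ii)$ it dualizes a Kadeishvili quasi-isomorphism to get $\tau = \tau_{A} \circ g^{\#}$ and maps $C \otimes_{\tau} A$ quasi-isomorphically onto the always-acyclic $B^{+}(A) \otimes_{\tau_{A}} A$. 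You instead prove both directions at once through the change-of-rings identification $C \otimes_{\tau} A = (C \otimes_{\tau_{C}} \Omega^{+}(C)) \otimes_{\Omega^{+}(C)} A$, the fundamental acyclicity of the cobar construction (making $C \otimes_{\tau_{C}} \Omega^{+}(C)$ a semifree resolution of $k$), and a derived Nakayama argument for the converse; this buys a uniform treatment and, notably, an explicit verification of \emph{minimality} of the resolution in $(ii)$ (via $\Delta_{1} = 0$ and $\epsilon_{A} \circ \tau = 0$), which the paper's sketch leaves implicit. Two imprecisions you should repair, neither fatal: the cone of $f_{\tau}$ is \emph{not} a bounded below complex of free $\Omega^{+}(C)$-modules, since $A$ is not free over $\Omega^{+}(C)$ through $f_{\tau}$ --- you must first replace it by a semifree resolution (or run the Nakayama induction on the homology of the cone using the Adams and homological gradings) before applying $k \otimes_{\Omega^{+}(C)} (\place)$; and your dualization in the first paragraph, passing from $C^{\#} \simeq \mathcal{E}xt_{A}^{\bullet}(k,k)$ to $C \simeq \Tor_{\bullet}^{A}(k,k)$, uses $C^{\#\#} \simeq C$ and hence local finite dimensionality of $C$ --- this does hold, because under either hypothesis $C^{\#}$ (resp.\ $C$, via $(C \otimes_{\tau} A) \otimes_{A} k \simeq C$) is identified with the locally finite dimensional space $\mathcal{E}xt_{A}^{\bullet}(k,k)$ (resp.\ $\Tor_{\bullet}^{A}(k,k)$), but it deserves a sentence; also note the side slip: for a resolution of the trivial \emph{right} module the minimality criterion concerns $(C \otimes_{\tau} A) \otimes_{A} k$, not $k \otimes_{A} (C \otimes_{\tau} A)$.
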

%%%%%%%
\begin{proof}
Let us show that $(ii) \Rightarrow (i)$. 
First note that we have a morphism of augmented dg algebras $f_{\tau} : \Omega^{+}(C) \rightarrow A$, which in turn induces a morphism of augmented dg coalgebras $B^{+}(f_{\tau}) : B^{+}(\Omega^{+}(C)) \rightarrow B^{+}(A)$. 
By \cite{Prou}, Thm. 3.25, we know that $f_{\tau}$ is a quasi-isomorphism if and only if $B^{+}(f_{\tau})$ is so, if and only if $B^{+}(\Omega^{+}(C)) \otimes_{f_{\tau} \circ \tau_{\Omega^{+}(C)}} A$ is acyclic (in the sense given by \cite{Prou}, D\'ef. 1.26). 
Using the adjunction quasi-isomorphism $\psi : C \rightarrow B^{+}(\Omega^{+}(C))$ (see \cite{Prou}, Thm. 2.28, or \cite{LH}, Lemme 1.3.2.3) we get that $C \otimes_{\tau} A$ is quasi-isomorphic 
to $B^{+}(\Omega^{+}(C)) \otimes_{f_{\tau} \circ \tau_{\Omega^{+}(C)}} A$ (see \cite{Prou}, Thm. 3.23 for the existence of the map. 
The fact that it is a quasi-isomorphism follows easily, see for instance \cite{Prou}, Thm. 2.8, (i)). 
Hence, the assumption that $C \otimes_{\tau} A$ is acyclic yields that the map $B^{+}(f_{\tau}) : B^{+}(\Omega^{+}(C)) \rightarrow B^{+}(A)$ 
is a quasi-isomorphism of coaugmented dg coalgebras. 
If we consider the composition of the adjunction quasi-isomorphism $\psi : C \rightarrow B^{+}(\Omega^{+}(C))$ of coaugmented $A_{\infty}$-coalgebras with the quasi-isomorphism 
$B^{+}(f_{\tau})$ we get thus a quasi-isomorphism of coaugmented $A_{\infty}$-coalgebras $C \rightarrow B^{+}(A)$, which in turn induces a quasi-isomorphism of augmented 
$A_{\infty}$-algebras $B^{+}(A)^{\#} \rightarrow C^{\#}$. 
Using the quasi-isomorphism $\mathcal{E}xt_{A}^{\bullet}(k,k) \rightarrow B^{+}(A)^{\#}$ of augmented $A_{\infty}$-algebras given by the theorem of Kadeishvili, 
we get the desired quasi-isomorphism of augmented $A_{\infty}$-algebras. 
It further preserves the Adams degree for all the involved maps preserve it. 

We will now prove the converse. 
Suppose thus that $f : \mathcal{E}xt_{A}^{\bullet}(k,k) \rightarrow C^{\#}$ is a quasi-isomorphism of minimal Adams graded augmented $A_{\infty}$-algebras. 
This implies that we have a quasi-isomorphism of augmented $A_{\infty}$-algebras $g : B^{+}(A)^{\#} \rightarrow C^{\#}$. 
Define $\tau$ as the twisting cochain composition of $g^{\#}$ and the universal twisting cochain $\tau_{A} : B^{+}(A) \rightarrow A$ of $A$ (see \cite{Prou}, D\'ef. 3.21 and Lemme 3.22).   
Since $g^{\#}$ is a quasi-isomorphism, we get that the map $g^{\#} \otimes \mathrm{id}_{A}$ defines a quasi-isomorphism from $C \otimes_{\tau} A$ to $B^{+}(A) \otimes_{\tau_{A}} A$, 
which is always acyclic (see for example \cite{Prou}, Thm. 3.19).  
See \cite{Prou}, Thm. 3.23 for the proof that this map is a morphism of complexes of $A$-modules. 
The fact that it is a quasi-isomorphism follows easily (see for instance \cite{Prou}, Thm. 2.8, (i)). 
Hence $C \otimes_{\tau} A$ is also acyclic. 
\end{proof}

For the coaugmented $A_{\infty}$-coalgebra structure defined on $J$, we now set $\tau : J \rightarrow A$ as the linear map given by the composition of the canonical projection 
$J \rightarrow J_{1} = V$ and the inclusion $V \subseteq A$. 
It is easy to check that $\tau$ is a twisting cochain, and furthermore $J \otimes_{\tau} A$ coincides with the right multi-Koszul complex of $A$. 
Indeed, we first note that the underlying homological bigraded vector space of $J \otimes_{\tau} A$ coincides with the corresponding one of $K(A)'_{\bullet}$. 
Moreover, the differential $d_{\tau}$ restricted to $J_{i} \otimes A$, for $i$ odd, is trivially seen to coincide with the differential $b_{i}'$ of $K(A)'_{\bullet}$. 
The case $i$ even follows from Remark \ref{remark:util}. 
Hence, by Proposition \ref{proposition:yonedaainf} and the previous theorem we have the following result. 
%%%%%%%
\begin{theorem}
\label{theorem:ainftres}
Let $A$ be a multi-Koszul algebra, and let $\{ J_{i} \}_{i \in \NN_{0}}$ be the collection of graded vector subspaces of $T(V)$ defined by $J_{0} = k$ and the recursive identities 
\eqref{eq:j_impar} and \eqref{eq:j_par}. 
Then the Adams graded augmented $A_{\infty}$-algebras $\mathcal{E}xt_{A}^{\bullet}(k,k)$ and $J^{\#}$ are quasi-isomorphic, where the structure of 
Adams graded augmented $A_{\infty}$-algebra of $J^{\#}$ was given in Proposition \ref{proposition:yonedaainf}. 
\end{theorem}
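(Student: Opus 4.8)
The plan is to obtain the statement as a direct application of Theorem \ref{theorem:keller} to the minimal Adams graded coaugmented $A_{\infty}$-coalgebra $C = J$. The hypothesis of that theorem on $C$ is supplied by Proposition \ref{proposition:yonedaainf}, which endows $J$ with precisely such a structure, with comultiplications $\Delta_{2}$ and higher $\Delta_{n}$ assembled from the inclusions $\iota_{i_{1},\dots,i_{n}}$. It therefore suffices to verify condition $(ii)$ of Theorem \ref{theorem:keller} for the twisting cochain $\tau : J \to A$ introduced above, since condition $(i)$ is exactly the quasi-isomorphism $\mathcal{E}xt_{A}^{\bullet}(k,k) \to J^{\#}$ of Adams graded augmented minimal $A_{\infty}$-algebras that we must produce.

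First I would check that $\tau$, the composition of the canonical projection $J \to J_{1} = V$ with the inclusion $V \subseteq A$, is a twisting cochain. Its compatibility with the counit and the coaugmentation is immediate from the definition. Since $A$ carries the zero differential and sits in cohomological degree zero, the remaining defining identity collapses to $\sum_{n \in \NN} \mu^{(n)} \circ \tau^{\otimes n} \circ \Delta_{n} = 0$. Because $\tau$ factors through $V = J_{1}$, only the components of $\Delta_{n}$ with image in $J_{1}^{\otimes n}$ survive, and by the homological degree bookkeeping of Proposition \ref{proposition:yonedaainf} these occur solely on $J_{2} = R$, through the maps $\iota_{1,\dots,1}$. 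By Remark \ref{remark:util}, $\iota_{1,\dots,1}$ is the inclusion $R \subseteq T(V)$ followed by the projection $\pi_{n}$ onto $V^{(n)}$, so the whole sum evaluated on $r \in R$ reconstructs $\pi(r)$, where $\pi : T(V) \to A$ is the canonical projection; this vanishes because $R \subseteq I = \Ker(\pi)$.

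Next I would identify the twisted tensor product $J \otimes_{\tau} A$ with the right multi-Koszul complex $K(A)'_{\bullet}$. The underlying bigraded right $A$-modules agree by construction, so only the differentials need attention. On the summand $J_{i} \otimes A$ with $i$ odd, the definition of $d_{\tau}$ reproduces the differential of $K(A)'_{\bullet}$ induced by $\delta_{2j+1}^{b}$ at once; for $i$ even, the matching is exactly the content of Remark \ref{remark:util}, which identifies the relevant $\iota_{(i-1,1,\dots,1)}$ with the inclusion $J_{i} \subseteq J_{i-1} \otimes T(V)_{>0}$ followed by $\mathrm{id}_{J_{i-1}} \otimes \pi$, thereby recovering $\delta_{2j}^{b}$. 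Since $A$ is multi-Koszul, Proposition \ref{proposition:leftbiright} gives that it is right multi-Koszul, so $K(A)'_{\bullet}$ is a resolution of the trivial right $A$-module $k$ by Definition \ref{definition:multikoszul}, and it is minimal by the remark following that definition. Hence $J \otimes_{\tau} A$ is a minimal projective resolution of $k$, condition $(ii)$ holds, and Theorem \ref{theorem:keller} delivers the claim.

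The conceptual content has already been absorbed into Proposition \ref{proposition:yonedaainf} and Theorem \ref{theorem:keller}, so the main obstacle here is the bookkeeping of the two verifications above: confirming the twisting-cochain equation via the vanishing $\pi(r) = 0$ on $J_{2} = R$, and, more delicately, matching $d_{\tau}$ with the differential induced by $\delta_{\bullet}^{b}$ in the even case, where the correct identification of the iterated inclusion with the projection of Remark \ref{remark:util}, together with the attendant signs, is the one genuinely careful step.
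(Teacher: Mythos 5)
Your proposal is correct and follows essentially the same route as the paper: the paper likewise defines $\tau : J \rightarrow A$ as the projection onto $J_{1} = V$ followed by the inclusion into $A$, identifies $J \otimes_{\tau} A$ with the right multi-Koszul complex $K(A)'_{\bullet}$ (with the odd case immediate and the even case handled via Remark \ref{remark:util}), and concludes by combining Proposition \ref{proposition:yonedaainf} with Theorem \ref{theorem:keller}. Your verification of the twisting-cochain identity, reducing it to $\pi(r) = 0$ for $r \in R = J_{2}$ since only the components $\iota_{1,\dots,1}$ of the $\Delta_{n}$ land in $J_{1}^{\otimes n}$, correctly fills in a step the paper leaves as ``easy to check.''
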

%%%%%%%

%%%%%%%%%%%%%%%%%%%%%%%%%%%%%%%%%%%%%%%%%%%%%%%%%%%%%%%%%%%%%%%%%%%%%%%%%%%%%%%%%%%%%%%%%%%%%%%%%%%%%%%%%%%%%%%%%%%%%%%%%%%%%%%%%%%%%%%%%%%%%%%%%%%%%
\bibliographystyle{model1-num-names}
\addcontentsline{toc}{section}{References}

%%%%%%%%%%%%%%%%%%%%%%%%%%%%%%%%%%%%%%%%%%%%%%%%%%%%%%%%%%%%%%%%%%%%%%%%%%%%%%%%%%%%%%%%%%%%%%%%%%%%%%%%%%%%%%%%%%%%%%%%%%%%%%%%%%%%%%%%%%%%%%%%%%%%%%%%%
\begin{bibdiv}
\begin{biblist}

\bib{BGS1}{article}{
   author={Be{\u\i}linson, A. A.},
   author={Ginsburg, V. A.},
   author={Schechtman, V. V.},
   title={Koszul duality},
   journal={J. Geom. Phys.},
   volume={5},
   date={1988},
   number={3},
   pages={317--350},
}

\bib{BGS2}{article}{
   author={Beilinson, Alexander},
   author={Ginzburg, Victor},
   author={Soergel, Wolfgang},
   title={Koszul duality patterns in representation theory},
   journal={J. Amer. Math. Soc.},
   volume={9},
   date={1996},
   number={2},
   pages={473--527},
}

\bib{B2}{article}{
   author={Berger, Roland},
   title={Koszulity for nonquadratic algebras},
   journal={J. Algebra},
   volume={239},
   date={2001},
   number={2},
   pages={705--734},
   note={See also \textit{Koszulity for nonquadratic algebras II}. Preprint available at \texttt{arXiv:math/0301172V1 [math.QA]}},
}

\bib{B4}{article}{ 
   author={Berger, Roland}, 
   title={La cat\'egorie des modules gradu\'es sur une alg\`ebre gradu\'ee (nouvelle version du chapitre 5 d'un cours de Master 2 \'a Lyon 1)},
   date={2008},
   url={http://webperso.univ-st-etienne.fr/$\sim$rberger/mes-textes.html},
}

\bib{BDW}{article}{
   author={Berger, Roland},
   author={Dubois-Violette, Michel},
   author={Wambst, Marc},
   title={Homogeneous algebras},
   journal={J. Algebra},
   volume={261},
   date={2003},
   number={1},
   pages={172--185},
}

\bib{BG}{article}{
   author={Berger, Roland},
   author={Ginzburg, Victor},
   title={Higher symplectic reflection algebras and non-homogeneous
   $N$-Koszul property},
   journal={J. Algebra},
   volume={304},
   date={2006},
   number={1},
   pages={577--601},
}

\bib{BM}{article}{
   author={Berger, Roland},
   author={Marconnet, Nicolas},
   title={Koszul and Gorenstein properties for homogeneous algebras},
   journal={Algebr. Represent. Theory},
   volume={9},
   date={2006},
   number={1},
   pages={67--97},
}

\bib{Bour}{book}{
   author={Bourbaki, N.},
   title={\'El\'ements de math\'ematique. Alg\`ebre. Chapitre 10. Alg\`ebre
   homologique},
   language={French},
   note={Reprint of the 1980 original [Masson, Paris; MR0610795]},
   publisher={Springer-Verlag},
   place={Berlin},
   date={2007},
   pages={viii+216},
 %  isbn={978-3-540-34492-6},
 %  isbn={3-540-34492-6},
 %  review={\MR{2327161}},
}

\bib{C}{book}{
   title={S\'eminaire Henri Cartan, 11e ann\'e: 1958/59. Invariant de Hopf
   et op\'erations cohomologiques secondaires},
   language={French},
   series={2e \'ed. 2 vols. \'Ecole Normale Sup\'erieure},
   publisher={Secr\'etariat math\'ematique},
   place={Paris},
   date={1959},
   pages={Vol. 1 (exp. 1--9), ii+121 pp. Vol. 2 (exp. 10--19), ii+158 pp.
   (mimeographed)},
}

\bib{CG}{article}{
   author={Conner, Andrew},
   author={Goetz, Pete},
   title={$A_\infty$-algebra structures associated to $\scr K_2$
   algebras},
   journal={J. Algebra},
   volume={337},
   date={2011},
   pages={63--81},
}

\bib{CS}{article}{
   author={Cassidy, Thomas},
   author={Shelton, Brad},
   title={Generalizing the notion of Koszul algebra},
   journal={Math. Z.},
   volume={260},
   date={2008},
   number={1},
   pages={93--114},
}

\bib{F}{article}{
   author={Fr{\"o}berg, R.},
   title={Koszul algebras},
   conference={
      title={Advances in commutative ring theory},
      address={Fez},
      date={1997},
   },
   book={
      series={Lecture Notes in Pure and Appl. Math.},
      volume={205},
      publisher={Dekker},
      place={New York},
   },
   date={1999},
   pages={337--350},
}

\bib{Go}{article}{
   author={Govorov, V. E.},
   title={Dimension and multiplicity of graded algebras},
   language={Russian},
   journal={Sibirsk. Mat. \v Z.},
   volume={14},
   date={1973},
   pages={1200--1206, 1365},
}

\bib{GM}{article}{
   author={Green, Edward L.},
   author={Marcos, E. N.},
   title={$d$-Koszul algebras, 2-$d$-determined algebras and 2-$d$-Koszul
   algebras},
   journal={J. Pure Appl. Algebra},
   volume={215},
   date={2011},
   number={4},
   pages={439--449},
}

\bib{GMMZ}{article}{
   author={Green, E.},
   author={Marcos, E},
   author={Mart\'{i}nez-Villa, R.},
   author={Zhang, P.},
   title={$D$-Koszul algebras},
   journal={J. Pure and App. Algebra},
   volume={193},
   date={2004},
   pages={141--162},
}

\bib{HL}{article}{
   author={Hai, Ph{\`u}ng H{\^o}},
   author={Lorenz, Martin},
   title={Koszul algebras and the quantum MacMahon master theorem},
   journal={Bull. Lond. Math. Soc.},
   volume={39},
   date={2007},
   number={4},
   pages={667--676},
}

\bib{HeL}{article}{
   author={He, Ji-Wei},
   author={Lu, Di-Ming},
   title={Higher Koszul algebras and $A$-infinity algebras},
   journal={J. Algebra},
   volume={293},
   date={2005},
   number={2},
   pages={335--362},
}

\bib{Her10}{article}{
   author={Herscovich, Estanislao},
   title={Representations of super Yang-Mills algebras},
   date={2012},
   journal={Comm. Math. Phys.},
%   volume={},
%   pages={},
  doi={10.1007/s00220-012-1648-z},
}

\bib{HeRe}{article}{
   author={Herscovich, Estanislao},
   author={Rey, Andrea},
   title={On a definition of multi-Koszul algebras},
   journal={J. Algebra},
   volume={376},
   date={2013},
   pages={196--227},
%   issn={0021-8693},
%   review={\MR{3003724}},
%   doi={10.1016/j.jalgebra.2012.11.030},
}

\bib{K80}{article}{
   author={Kadei{\v{s}}vili, T. V.},
   title={On the theory of homology of fiber spaces},
   language={Russian},
   note={International Topology Conference (Moscow State Univ., Moscow,
   1979)},
   journal={Uspekhi Mat. Nauk},
   volume={35},
   date={1980},
   number={3(213)},
   pages={183--188},
%   issn={0042-1316},
%   review={\MR{580645 (82a:57041a)}},
}

\bib{K82}{article}{
   author={Kadeishvili, T. V.},
   title={The algebraic structure in the homology of an $A(\infty
   )$-algebra},
   language={Russian, with English and Georgian summaries},
   journal={Soobshch. Akad. Nauk Gruzin. SSR},
   volume={108},
   date={1982},
   number={2},
   pages={249--252 (1983)},
%   issn={0132-1447},
%   review={\MR{720689 (84k:55009)}},
}

\bib{K}{article}{
   author={Koszul, Jean-Louis},
   title={Homologie et cohomologie des alg\`ebres de Lie},
   language={French},
   journal={Bull. Soc. Math. France},
   volume={78},
   date={1950},
   pages={65--127},
}

\bib{KM}{article}{
   author={K{\v{r}}{\'{\i}}{\v{z}}, Igor},
   author={May, J. P.},
   title={Operads, algebras, modules and motives},
   language={English, with English and French summaries},
   journal={Ast\'erisque},
   number={233},
   date={1995},
   pages={iv+145pp},
%   issn={0303-1179},
%   review={\MR{1361938 (96j:18006)}},
}

\bib{LH}{thesis}{
   author={Lef\`evre-Hasegawa, Kenji},
   title={sur les $A_{\infty}$-cat\'egories},
   language={French},
   type={Ph.D. Thesis},
   place={Paris},
   date={2003},
   note={Corrections at \texttt{http://www.math.jussieu.fr/~keller/lefevre/TheseFinale/corrainf.pdf}},
}

\bib{Le}{book}{
   author={Lemaire, Jean-Michel},
   title={Alg\`ebres connexes et homologie des espaces de lacets},
   language={French},
   series={Lecture Notes in Mathematics, Vol. 422},
   publisher={Springer-Verlag},
   place={Berlin},
   date={1974},
   pages={xiv+134},
   %review={\MR{0370566 (51 \#6793)}},
}

\bib{LPWZ09}{article}{
   author={Lu, D.-M.},
   author={Palmieri, J. H.},
   author={Wu, Q.-S.},
   author={Zhang, J. J.},
   title={$A$-infinity structure on Ext-algebras},
   journal={J. Pure Appl. Algebra},
   volume={213},
   date={2009},
   number={11},
   pages={2017--2037},
%   issn={0022-4049},
%   review={\MR{2533303 (2010e:16015)}},
%   doi={10.1016/j.jpaa.2009.02.006},
}

\bib{M}{article}{
   author={Manin, Yu. I.},
   title={Some remarks on Koszul algebras and quantum groups},
   language={English, with French summary},
   journal={Ann. Inst. Fourier (Grenoble)},
   volume={37},
   date={1987},
   number={4},
   pages={191--205},
}

\bib{MS06}{article}{ 
   author={Movshev, Michael},
   author={Schwarz, Albert},
   title={Algebraic structure of Yang-Mills theory},
   conference={
      title={The unity of mathematics},
   },
   book={
      series={The unity of Mathematics, Progr. Math.},
      volume={244},
      publisher={Birkh\"auser Boston},
      place={Boston, MA},
   },
   date={2006},
   pages={473--523},
}

\bib{Mov}{article}{ 
   author={Movshev, Michael},   
   title={Yang-Mills theories in dimensions 3,4,6,10 and Bar-duality},
   date={2005},
   eprint={http://arxiv.org/abs/hep-th/0503165v2},
}

\bib{NV}{book}{
   author={N{\u{a}}st{\u{a}}sescu, Constantin},
   author={Van Oystaeyen, Freddy},
   title={Methods of graded rings},
   series={Lecture Notes in Mathematics},
   volume={1836},
   publisher={Springer-Verlag},
   place={Berlin},
   date={2004},
   pages={xiv+304},
}

\bib{P}{article}{
   author={Priddy, Stewart B.},
   title={Koszul resolutions},
   journal={Trans. Amer. Math. Soc.},
   volume={152},
   date={1970},
   pages={39--60},
}

\bib{Prou}{article}{
   author={Prout{\'e}, Alain},
   title={$A_\infty$-structures. Mod\`eles minimaux de Baues-Lemaire et
   Kadeishvili et homologie des fibrations},
   language={French},
   note={Reprint of the 1986 original;
   With a preface to the reprint by Jean-Louis Loday},
   journal={Repr. Theory Appl. Categ.},
   number={21},
   date={2011},
   pages={1--99},
%   review={\MR{2844537 (2012j:55017)}},
}

\bib{W}{book}{
   author={Weibel, Charles A.},
   title={An introduction to homological algebra},
   series={Cambridge Studies in Advanced Mathematics},
   volume={38},
   publisher={Cambridge University Press},
   place={Cambridge},
   date={1994},
   pages={xiv+450},
}

\end{biblist}
\end{bibdiv}
%%%%%%%%%%%

\end{document}